\documentclass[english]{amsart}

\usepackage{a4wide}

\newtheorem{theorem}{Theorem}[section]
\newtheorem{lemma}[theorem]{Lemma}
\newtheorem{proposition}[theorem]{Proposition}
\newtheorem{corollary}[theorem]{Corollary}

\theoremstyle{definition}
\newtheorem{definition}[theorem]{Definition}

\theoremstyle{remark}
\newtheorem*{remark}{Remark}

\numberwithin{equation}{section}

\usepackage[indent = 0.3in]{}

\usepackage{graphicx}

\usepackage{xcolor}

\usepackage[fixlanguage]{babelbib}
\selectbiblanguage{english}

\usepackage{comment}

\usepackage{amsfonts,amssymb,amsbsy,amsmath,mathrsfs,amsthm}
\usepackage{bbm}

\usepackage{commath}

\usepackage{enumitem}

\usepackage{hyperref}
\usepackage{url}

\hypersetup{pdfpagemode=UseNone, pdfstartview=FitH,
  colorlinks=true,urlcolor=red,linkcolor=blue,citecolor=black,
  pdftitle={Default Title, Modify},pdfauthor={Your Name},
  pdfkeywords={Modify keywords}}

\RequirePackage{babel}

\newcommand{\dx}{\, d x}
\newcommand{\dy}{\, d y}
\newcommand{\dt}{\, d t}
\newcommand{\ds}{\, d s}

\newcommand{\R}{\mathbb{R}}
\newcommand{\Rn}{\mathbb{R}^{n+1}}

\newcommand{\Z}{\mathbb{Z}}
\newcommand{\N}{\mathbb{N}}

\def\Xint#1{\mathchoice
   {\XXint\displaystyle\textstyle{#1}}%
   {\XXint\textstyle\scriptstyle{#1}}%
   {\XXint\scriptstyle\scriptscriptstyle{#1}}%
   {\XXint\scriptscriptstyle\scriptscriptstyle{#1}}%
   \!\int}
\def\XXint#1#2#3{{\setbox0=\hbox{$#1{#2#3}{\int}$}
     \vcenter{\hbox{$#2#3$}}\kern-.5\wd0}}

\def\dashint{\Xint-}

\DeclareMathOperator*{\esssup}{ess\,sup}
\DeclareMathOperator*{\essinf}{ess\,inf}

\newcommand{\dist}[0]{\operatorname{dist}}

\usepackage{cite}
\makeatletter
\newcommand{\citecomment}[2][]{\citen{#2}#1\citevar}
\newcommand{\citeone}[1]{\citecomment{#1}}
\newcommand{\citetwo}[2][]{\citecomment[,~#1]{#2}}
\newcommand{\citevar}{\@ifnextchar\bgroup{;~\citeone}{\@ifnextchar[{;~\citetwo}{]}}}
\newcommand{\citefirst}{\@ifnextchar\bgroup{\citeone}{\@ifnextchar[{\citetwo}{]}}}

\makeatother

\begin{document}

\title{Parabolic weak porosity and parabolic Muckenhoupt distance functions}

\author{Henri Lahdelma}
\address[H.L.]{Department of Mathematics, School of Science, Aalto University, P.O. Box 11100, 00076 Aalto, Finland}
\email{henri.lahdelma@aalto.fi}
\thanks{The first author would like to thank Juha Kinnunen for discussions and suggestions that have been helpful in the research project. The first author was supported by the Magnus Ehrnrooth Foundation.}

\author{Kim Myyryl\"ainen}
\address[K.M.]{University of Jyvaskyla, Department of Mathematics and Statistics, P.O. Box 35, FI-40014 University of Jyvaskyla, Finland}
\email{kim.k.myyrylainen@jyu.fi}
\thanks{The second author was supported by the Research Council of Finland project 360185, the Emil Aaltonen Foundation, Charles University PRIMUS/24/SCI/020 and Research Centre program No.~UNCE/24/SCI/005.
}

\author{Antti V. V\"ah\"akangas}
\address[A.V.V.]{University of Jyvaskyla, Department of Mathematics and Statistics, P.O. Box 35, FI-40014 University of Jyvaskyla, Finland} 
\email{antti.vahakangas@iki.fi}

\subjclass[2020]{42B35, 42B37, 28A75, 28A80}

\keywords{weak porosity, parabolic Muckenhoupt weight, distance function, fractals}

\begin{abstract}
We develop the parabolic weak porosity to characterize the parabolic Muckenhoupt $A_1$ weights with time-lag. Our main result shows that a nonempty closed set is parabolic weakly porous if and only if the parabolic distance function of the set to a negative power is in the parabolic Muckenhoupt $A_1$ class. We apply a novel stopping time argument in combination with the translation and doubling results for the parabolic weakly porous sets.
\end{abstract}

\maketitle

\section{Introduction}

The regularity of distance weights has proven to be an interesting topic in harmonic analysis with applications to PDE theory. 
Given a nonempty set $E$, a distance weight $w \in L_\textnormal{loc}^1$ is of the form
\begin{align*}
    w(\cdot) = \dist(\cdot,E)^{-\alpha}
\end{align*}
for some $\alpha >0$. The main interest has been to characterize all possible sets $E$ and exponents $\alpha$ such that a distance weight $w$ exhibits regular behaviour, in particular, the Muckenhoupt properties. For earlier results concerning distance weights in the elliptic setting, see \cite{ACDT2014, DILLTV2019, Aikawa1991, DuranGarcia2010}. Our goal is to advance the higher dimensional one-sided theory of Muckenhoupt distance weights, that is, the forward-in-time parabolic Muckenhoupt distance weights by introducing the concept of parabolic weakly porosity. 

For the set $E$ to induce Muckenhoupt $A_q$ distance weights in the elliptic setting it is enough that $E$ is a porous set, a condition analyzed in \cite{Luukkainen1998, DILLTV2019}. The concept of porosity was expanded into the weak porosity by Vasin \cite{Vasin05}, characterizing the $A_1$ distance weights on a unit circle. These results were later generalized to $\R^n$ by Anderson et al. \cite{ALMV24}, while also analyzing the exponent $\alpha$ by refining the Assouad codimension into the Muckenhoupt exponent. The theory has quickly expanded thereafter. Similar characterizations for $A_q$ distance weights for $q>1$ were independently studied by Gómez \cite{Vargas2026} and Pasquariello and Uriarte-Tuero \cite{PasquarielloUriarte-Tuero2025}, considering also a more general median porosity. The weak porosity has also inspired similar characterizations for $A_1$ distance weights in metric measure spaces \cite{Mudarra25, AimarGomezVargas24}, and has applications, for example, in Carleson embeddings \cite{Vasin2025}.

The one-sided theory of the weak porosity has already been studied on a real line by Aimar et al. \cite{AGIM25} to characterize the one-sided $A_1$ distance weights. Their left- and right-sided versions of weak porosity adapt to the notion of time by limiting the holes or pours of weakly porous sets. This results in information passing only in one direction along the real line, which leads to interesting forward-in-time versions of the elliptic results, such as the doubling of the maximal hole. However, many of the methods on the real line are specialized on one dimension, which makes it challenging to generalize the one-sided theory to higher dimensions.

In our paper, we show a full characterization of the parabolic $A_1$ distance weights via parabolic weakly porous sets. The parabolic Muckenhoupt theory was introduced by Kinnunen and Saari \cite{kinnunenSaariMuckenhoupt, kinnunenSaariParabolicWeighted} as an $n+1$-dimensional generalization of the one-sided Muckenhoupt theory, with later research of the parabolic $A_q$ and parabolic BMO having followed \cite{KinnunenMyyry2024, KinnunenMyyryYang2024}. The parabolic $A_1$ classes, denoted by $A_1^+(\gamma)$, consist of weights $w$ satisfying
\begin{align*}
    \dashint_{R^-(\gamma)} w(x,t) \dx \dt \leq C \essinf_{(x,t)\in R^+(\gamma)} w(x,t) \quad \textnormal{for every} \quad R^\pm(\gamma) \subseteq \Rn.
\end{align*}
Here $R^\pm(\gamma)$ are space-time rectangles with a fixed time-lag $0< \gamma<1$, see Section \ref{subsec: parabolic A1}. The time-lag between $R^-(\gamma)$ and $R^+(\gamma)$ is a profound feature of the parabolic theory, since the underlying PDE, that is, the doubly nonlinear equation, does not support Harnack's inequality without the time-lag, see \cite{GianazzaVespri2006, KinnunenKuusi07}. This reflects to the parabolic Muckenhoupt theory by the time-lag being a crucial part of chaining arguments. Moreover, the time-lag is unique to the higher dimensions, since it is shown in \cite{KongYangYuan26} that the time-lag can be eliminated when working on the real line. For more results of the parabolic theory, such as two-weight versions with estimates for the fractional maximal functions, see \cite{Cruz-Uribe2025Two-weightFunctions, KongYangYuanZhu2025, MaHeYan23, CKYY2026}.

Our main theorem, Theorem \ref{theo: grand theorem}, characterizes the parabolic $A_1$ distance weights, that is, $A_1^+(\gamma)$ for $0<\gamma<1$ via parabolic weakly porous sets, where the distance functions use the parabolic distance metric, see Section \ref{sec: preliminaries}.
\begin{theorem}
    A parabolic distance weight of negative power with respect to a nonempty closed set $E\subseteq \Rn$ belongs to $A_1^+(\gamma)$ if and only if $E$ is parabolic weakly porous.
\end{theorem}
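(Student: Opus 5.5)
The plan is to follow the elliptic characterization of Anderson et al.\ \cite{ALMV24}, adapted to parabolic rectangles $R^\pm(\gamma)$ with time-lag. Parabolic weak porosity is taken to mean the following: for each parabolic rectangle $R$, consider the maximal family of pairwise disjoint parabolic rectangles inside $R^-(\gamma)$ (or inside a suitable forward/backward portion of $R$) that do not meet $E$. The condition is that the number of such holes needed to cover a fixed fraction $\delta$ of the side length, or of the measure, is bounded by a constant $N$. This is the natural one-sided analogue of the definition in \cite{ALMV24}. The theorem then splits into two implications, each made quantitative in the exponent. For $w=\dist(\cdot,E)^{-\alpha}$ we aim to show that $w\in A_1^+(\gamma)$ for every sufficiently small $\alpha>0$ exactly when $E$ is parabolic weakly porous. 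The parabolic distance is $\dist((x,t),E)$ taken with respect to $\max\{|x-y|,|t-s|^{1/p}\}$.

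\textbf{Necessity.} Assume $w\in A_1^+(\gamma)$. Fix $R$ and suppose, towards a contradiction, that many small disjoint holes are needed. Then $E$ is ``dense at small scales'' in a large part of $R^-(\gamma)$. This makes $\dist(\cdot,E)$ small on most of $R^-(\gamma)$, so $w$ is large there. The step to get right is the direction: we need the mean of $w$ over $R^-(\gamma)$ to be large compared with $\essinf_{R^+(\gamma)}w$. The infimum of $w$ on $R^+(\gamma)$ is at most $C\ell(R)^{-\alpha}$, attained at points whose distance to $E$ is comparable to the size of the largest hole, and at worst comparable to $\ell(R)$. Meanwhile the average over $R^-(\gamma)$ is at least $(\delta\ell(R))^{-\alpha}$ times a proportion tending to one. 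Letting $N\to\infty$ forces $\delta\to 0$ and violates the $A_1^+$ constant. I therefore define porosity with holes in the \emph{past} rectangle, and state the constants as functions of $[w]$ and $\alpha$.

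\textbf{Sufficiency.} Assume $E$ is parabolic weakly porous. The first step establishes the two structural lemmas alluded to in the abstract. The translation lemma says porosity for $R^-(\gamma)$ transfers to translated and time-shifted rectangles with a different lag, by a chaining argument in time. The doubling lemma says the maximal hole in $R^-$ is comparable to the maximal hole in an enlarged rectangle forward in time. The second step gives an upper bound for the infimum: for $(x,t)\in R^+(\gamma)$ we have $\dist((x,t),E)\lesssim \ell(R)$, and more precisely, by doubling, $\dist((x,t),E)\lesssim \max\{\text{largest hole in } R^-\}$. This gives $\essinf_{R^+}w\gtrsim h_{\max}^{-\alpha}$. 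The third step controls $\dashint_{R^-}w$ by a stopping time argument. Starting from $R^-$, we apply weak porosity: at most $N$ holes cover part of the size, and the remainder, of proportion at most $1-\eta$, is covered by subrectangles on which we iterate. At generation $k$ the region close to $E$ at scale $\sim 2^{-kc}$ has measure at most $(1-\eta)^k|R^-|$. Summing,
\begin{align*}
\dashint_{R^-(\gamma)} w \lesssim \sum_{k} (1-\eta)^k\, (2^{-kc} h_{\max})^{-\alpha},
\end{align*}
which converges for $\alpha$ small enough, namely $2^{c\alpha}(1-\eta)<1$, and gives the $A_1^+(\gamma)$ bound.

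The main obstacle is this stopping time. The sub-rectangles produced must again be admissible for the porosity hypothesis, that is, they must have the correct past/future geometry and time-lag, without the lag accumulating over generations. This is where the translation and doubling lemmas are needed at every generation, so I would first try a Calder\'on--Zygmund-type parabolic decomposition with fixed lag. I would also need to check that the independence of $\gamma$ (via \cite{kinnunenSaariMuckenhoupt}) can be used to absorb the lag changes.
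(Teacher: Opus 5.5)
Your overall architecture matches the paper's. Necessity comes from testing the $A_1^+$ inequality on the points not covered by large holes. Sufficiency uses a time-lag translation lemma, a forward-in-time doubling lemma and a geometric layer-cake sum. However, your definition of porosity is wrong, and the central step of sufficiency is missing.

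\emph{The definition.} As you state it, a uniform bound $N$ on the number of $E$-free holes needed to cover a fixed fraction of the rectangle forces a hole of measure $\gtrsim |R|/N$. That is porosity at the scale of $R$, and with it necessity fails. For $E=\mathbb{Z}^{n+1}$ the weight $\dist_p(\cdot,E)^{-\beta(n+p)}$ lies in $A_1^+(\gamma)$ for small $\beta$. Yet every $E$-free rectangle has side at most $1$, so a large $R$ needs about $l_x(R)^{n+p}$ holes. The admissibility threshold has to be relative and one-sided: holes $P_k\in\mathcal{D}(R(\gamma_0))$ with $|P_k|\ge\delta|\mathcal{M}(R^\theta(\gamma_0))|$, $\theta>1$, i.e.\ compared with the maximal hole of the \emph{future} translate.

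This is also the scale your necessity argument must use. For $w=\dist_p(\cdot,E)^{-\alpha(n+p)}$ the correct bound is $\essinf_{R^\theta}w=(\esssup_{R^\theta}\dist_p(\cdot,E))^{-\alpha(n+p)}\le 2^{\alpha(n+p)}l_x(\mathcal{M}(R^\theta))^{-\alpha(n+p)}$. Your claim $\essinf_{R^+}w\lesssim\ell(R)^{-\alpha}$ fails whenever $R^+$ is densely filled by $E$. Likewise, in the sufficiency part, $\dist_p(\cdot,E)\lesssim\ell(R)$ on $R^+$ fails when $E$ is far from $R^+$. That case must be handled separately by the triangle inequality.

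\emph{The missing step.} The decay you assert in the third step is exactly the $\alpha$-improvement of Lemma \ref{lem: wp => polynomial decay}: generation $k$ leaves measure at most $(1-\eta)^k|R^-|$ at scale $2^{-kc}h_{\max}$. The iteration you have in mind does not go through in the parabolic setting.

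In the elliptic case it works as follows. Porosity is applied to a maximal cube $P$ of the uncovered region, whose parent meets an admissible hole. It gives holes comparable to $\mathcal{M}(P)\gtrsim\mathcal{M}(\pi P)$, by doubling of the maximal hole.

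Here the two inputs are weaker:
\begin{enumerate}
\item Porosity on $P$ only yields holes comparable to $\mathcal{M}(P^\theta)$.
\item Doubling (Corollary \ref{cor: mh doubling}) only gives $|\mathcal{M}(P)|\ge\sigma|\mathcal{M}(\pi^+P^{\psi})|$, with a strictly forward-translated parent.
\end{enumerate}
That $\pi P$ meets an admissible hole says nothing about $\pi^+P$. Walking up the chain $\pi_j^+P$ until a large hole appears costs $\sigma^j$ with $j$ unbounded, and the chain may leave $R$. So your uniform per-generation scale factor is unjustified. A Calder\'on--Zygmund decomposition with fixed lag does not address this. Invoking the $\gamma$-independence of $A_1^+(\gamma)$ is circular, since that is a property of the weight whose membership you are trying to prove.

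The missing idea is the stopping time $\tau_\Lambda(P)$. It is the first $k$ such that some translate $\pi_k^+P^\theta$ with $\theta\in[\phi-\theta_0,\Phi-\theta_0]$ satisfies $|\mathcal{M}(\pi_k^+P^\theta)|\ge\Lambda=\delta|\mathcal{M}(R^\Phi)|$. One sorts the chain elements into collections $\mathcal{S}_k$ by stopping time and proves three facts:
\begin{enumerate}
\item $\pi^+\mathcal{S}_{k+1}\subseteq\mathcal{S}_k$;
\item $\pi^+\mathcal{S}_{k+1}$ consists of pairwise disjoint rectangles;
\item no $\mathcal{D}_1(\pi P)$ lies entirely in $\mathcal{S}_{k+1}$.
\end{enumerate}
Together these give the exponential decay of Lemma \ref{lem: exponential decay}, in which the measure of the union of $\mathcal{S}_{k+1}$ is at most $\lambda^k$ times that of $\mathcal{S}_1$.

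Lemma \ref{lem: decay of interim space} adds that the chains stay in $\bigcup_{\theta\in[0,\psi]}R^\theta$ with $\psi\lesssim\delta^\mu$. This permits truncation at a uniform $H$ and the induction $c_{i+1}=c_i+hc_0(1-c_i)$, $\delta_{i+1}=\sigma^H\delta_i$. Without an argument of this kind your sufficiency direction remains a heuristic.
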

This problem has already been presented by Kong et al. \cite{KongYangYuan26} in their work of parabolic BLO with nonzero time-lag. They showed that $A_1^+(\gamma)$ distance weights induce forward-in-time weak porosity, however, the reverse direction was left as an open problem. The reverse direction, appearing more difficult, will be answered in our work. 

The elliptic weak porosity of \cite{ALMV24} is based on dyadic methods. Since dyadic methods have also worked well in the parabolic setting using parabolic rectangles, for instance, in Calderón--Zygmund type decompositions \cite{Cruz-Uribe2025Two-weightFunctions, KongYangYuanZhu2025, KinnunenMyyryYang2024, KinnunenMyyry2025}, this has motivated to develop the parabolic weak porosity using the dyadic division of a parabolic rectangle. The parabolic rectangles $R^\pm(\gamma)$ are space-time cylinders that scale to the power of some fixed $p>1$ along the temporal axis. This different scaling with respect to spatial and temporal directions is natural to assume due to the underlying PDE. Moreover, it plays an important role in the chaining arguments and motivates the use of the parabolic metric. Especially the case $p=2$ is the most fundamental, as then the underlying doubly nonlinear equation reduces to the heat equation. Regardless, it is possible to obtain a well-defined dyadic system for any $p>1$, see Section \ref{sec: preliminaries}.

Theorem \ref{theo: doubling porosity} and Corollary \ref{cor: translation} show the forward-in-time doubling features and the time-lag invariance of parabolic weakly porous sets. The forward-in-time aspect is necessary, and standard doubling of \cite{ALMV24} cannot be expected in the parabolic setting. This is apparent since the aforementioned results take advantage of the nonzero time-lag $\gamma>0$ and the parabolic geometry $p>1$ in a typical chaining argument appearing often in the parabolic theory, for instance in \cite{kinnunenSaariMuckenhoupt, KinnunenMyyryYang2024}. As a benefit of these results, we can restrict the parabolic weak porosity under integer translations further motivating the dyadic approach.

Our strategy of proving the main theorem, Theorem \ref{theo: grand theorem}, is based on the relationship between the porosity constants and the exponent $\alpha$ of a parabolic distance weight. We have decided to call this relationship the $\alpha$-improvement of a parabolic weakly porous set, see Definition \ref{def: alpha improvement}. The $\alpha$-improvement seems to offer a robust machinery to obtain $A_1$-type inequalities. As a matter of fact, $\alpha$-improvement itself already establishes a full characterization of $A_1^+(\gamma)$ distance weights, see Theorem \ref{theo: A1 => wp} and Corollary \ref{cor: alpha improvement => A1}. This approach shares similarities with \cite[Theorem 4.1]{Vargas2026}, where the exponent $\alpha$ plays a similar role for $A_q$ weights with $q>1$. In their and our works the estimates are quantitative and preserve information of the exponent $\alpha$. Thus, it seems that this type of approach is effective to fully understand the theory of weak porosity.

The greatest difficulties of our work arise in proving the $\alpha$-improvement of parabolic weakly porous sets, see Lemma \ref{lem: wp => polynomial decay}. The approaches of the one-sided theory on the real line in \cite{AGIM25} do not seem to generalize to the parabolic setting. Moreover, the direct approaches of \cite{ALMV24, Vargas2026} in the elliptic case are also difficult to implement in the forward-in-time context as such. However, in Section~\ref{sec: stopping conditions} we present a novel stopping time argument in the parabolic setting, where we recursively apply the elliptic strategy. With this stopping time argument we are able to derive an exponential decay estimate, see Lemma~\ref{lem: exponential decay}. This estimate is the key to the $\alpha$-improvement of parabolic weakly porous sets, giving the full characterization of parabolic distance weights.

\section{Preliminaries} \label{sec: preliminaries}
Our preliminaries mainly consist of notational aspects, however, we will also have discussion of our methods, namely, the dyadic approach in the parabolic setting.

\subsection{Basic assumptions}
The space that we focus on is 
\begin{align*}
    \Rn = \big\{(x,t) \, : \, x = (x_1,x_2,\cdots,x_n) \in \R^n, \, t\in \R \big\},
\end{align*}
where $n \in \N$, and where the extra dimension is given to a time variable. We use the functions $\textnormal{pr}_x(\cdot)$ and $\textnormal{pr}_t(\cdot)$ to describe the spatial and temporal projections of sets respectively. For any set $E \subseteq \Rn$ we define a translation by $(y,s) \in \Rn$ as
\begin{align*}
    E + (y,s) = \big\{(x,t) + (y,s) \in \Rn \,:\, (x,t) \in E \big\}.
\end{align*}
We denote by $|\cdot|$ the Lebesgue measure in $\Rn$ and the temporal distances between any two points $(x,t),(y,s) \in \Rn$ by
\begin{align*}
    \quad \dist_t\big((x,t),(y,s)\big) = |t-s|.
\end{align*}

Additionally, we define a parabolic metric in this paper to simplify certain expressions. Given a parabolic constant $1< p<\infty$, the parabolic distance metric is
\[
\dist_p\big((x,t),(y,s)\big)=\max\big\{\|x-y\|_\infty,\lvert t-s\rvert^{\frac1p}\big\}.
\]
One can verify that $\dist_p(\cdot,\cdot)$ is indeed a metric,
and for any two sets $E,F \subseteq \Rn$ we define naturally
\[
\dist_p(E,F) = \inf \big\{ \dist_p\big((x,t),(y,s)\big) : (x,t)\in E, (y,s)\in F \big\}.
\]
Unless specifically mentioned, we always assume $p>1$. We also define for convenience the parabolic diameter of sets as
\begin{align*}
    \text{diam}_p(E) = \sup_{(x,t),(y,s) \in E} \dist_p\big((x,t),(y,s)\big).
\end{align*}

We will replace the basic sets, such as cubes, with parabolic rectangles. A parabolic rectangle $R \subseteq \Rn$ at a point $(x,t)$ and with a side length $L>0$ is defined by
\[
R = R(x,t,L) = Q(x,L) \times [t - L^p, t),
\]
where $Q(x,L) \in \R^n$ is a half open cube centered at $x$ with a side length $L$. These rectangles follow the parabolic geometry, which makes them better suited for the geometric arguments. The spatial side length of the parabolic rectangle is denoted by $l_x(\cdot)$ and the temporal side length is denoted by $l_t(\cdot)$. Note that if the side length of a parabolic rectangle $R$ is $L>0$, then $l_x(R) = L$ and $l_t(R) = L^p$.

We also need certain type of truncations of parabolic rectangles. For a truncation parameter $0\leq \gamma \leq 1/2$ we define a truncation of a parabolic rectangle
\begin{align*}
    R(x,t,L,\gamma) = R(\gamma) = Q(x,L) \times [t - L^p, t - \gamma L^p).
\end{align*}
The truncation keeps the spatial side length the same as $l_x\big(R(\gamma)\big) = L$, however, the temporal side length is now $l_t\big(R(\gamma)\big) = (1-\gamma)L^p$. For simplicity, we call the truncated parabolic rectangles also parabolic rectangles or just rectangles.

Temporal translations are particularly important in the parabolic theory. Hence, for any parabolic rectangle $R(\gamma) \subseteq \Rn$ with $0\leq \gamma \leq 1/2$ we define the notation
\begin{align*}
    R^\theta(\gamma) = R(\gamma) + \big(0,  \theta l_t\big(R(\gamma)\big)\big),
\end{align*}
where $\big(0,  \theta l_t\big(R(\gamma)\big)\big) \in \Rn$ and $\theta \in \R$ is a translation parameter. To keep track of translations, we often study the distance between the lower faces of $R(\gamma)$, which we denote by $\partial_{\textnormal{low}} R \subseteq \partial R$. Consequently, we have the relationship
\begin{align*}
    \dist_t\big(\partial_{\textnormal{low}}R(\gamma),\partial_{\textnormal{low}}R^\theta(\gamma)\big) = |\theta| l_t\big(R(\gamma)\big).
\end{align*}

Finally, for any $f\in L_{\textnormal{loc}}^1$ and measurable $A \subseteq \Rn$ with $0<|A| <\infty$, we denote
\begin{align*}
    \dashint_A f \dx \dt = \frac{1}{|A|} \int_A f \dx \dt.
\end{align*}

\subsection{Parabolic dyadic division}
Our approaches are heavily based on dyadic methods. We begin by introducing a dyadic division for a parabolic rectangle. There are two main difficulties in the definition of the dyadic lattice. Firstly, while each spatial edge of any parabolic rectangle $R \subseteq \Rn$ can be divided dyadically, the same does not apply for the temporal edges.

Ideally, we would like to divide the temporal edges into $2^p$ parts. However, if $2^p$ is not an integer, then necessarily the subrectangles have to overlap or they cannot be the same shape as $R$. We have opted to sacrifice the similarity of the shape in our definition by using the truncated subrectangles.

The second difficulty arises from much deeper geometrical arguments of Section \ref{sec: stopping conditions}. In short, the division rate of each dyadic layer may need to be increased to extend the results to the case when $p$ is small. We choose an integer $d \geq 1$ as the division rate such that
\begin{align} 
    dp \geq \log_2 (9). \label{eq: division rate}
\end{align}
The simplest choice would be $d = 4$, which works for every $p>1$.

We obtain the first dyadic layer $\mathcal{D}_1(R)$ by dividing the spatial edges into $2^d$ parts and the temporal edges into $ \big\lceil 2^{dp} \big\rceil$ parts. Now $\mathcal{D}_1(R)$ consists of truncated parabolic rectangles $P(\gamma)$ for some $0\leq \gamma <1$ such that
\begin{align*}
    l_x\big(P(\gamma)\big) = \frac{1}{2^d} l_x(R) \quad \text{and} \quad l_t\big(P(\gamma)\big) = \frac{1}{\lceil2^{dp}\rceil} l_t(R) =  \frac{1}{\lceil2^{dp}\rceil} l_x(R)^p.
\end{align*}
On the other hand, $\gamma$ fixes the temporal side length of $P(\gamma)$ such that 
\begin{align*}
    l_t\big(P(\gamma)\big) = (1-\gamma) l_x\big(P(\gamma)\big)^p = (1-\gamma) \frac{1}{2^{dp}}l_x(R)^p.
\end{align*}
This means that $\gamma$ satisfies
\begin{align*}
    (1-\gamma) = \frac{2^{dp}}{\lceil2^{dp}\rceil}.
\end{align*}
Observe that $ \big\lceil 2^{dp} \big\rceil$ approximates $2^{dp}$ with upper and lower bound
\begin{align*}
    \frac{1}{2} \leq \frac{2^{dp}}{\big\lceil 2^{dp} \big\rceil} \leq 1,
\end{align*}
implying $0\leq \gamma \leq 1/2$.

To generate the higher order dyadic layers while preserving the nestedness of the dyadic layers, we have to generalize the dyadic division to truncated parabolic rectangles $R(\gamma)$ for any $0\leq \gamma \leq 1/2$.  We define the general first order dyadic layer $\mathcal{D}_1\big(R(\gamma)\big)$ in the following way. Divide the spatial sides into $2^d$ equally long intervals so that the spatial side length of each $P(\alpha)\in \mathcal{D}_1\big(R(\gamma)\big)$ satisfies
\begin{align}
    l_x\big(P(\alpha)\big) = \frac{1}{2^d} l_x\big(R(\gamma)\big) = \frac{1}{2^d} l_x(R). \label{eq: lx}
\end{align}

To ensure that $0 \leq \alpha \leq 1/2$ stays within the desired range, we alternate dividing the temporal edges into $ \big\lceil 2^{dp} \big\rceil$ or $\big\lfloor 2^{dp} \big\rfloor$ intervals. We set number of temporal divisions
\begin{align}
    k = 
    \begin{cases}
        \big\lceil 2^{dp} \big\rceil, \quad 0 \leq \gamma \leq 1- \frac{2^{dp}+1}{2^{dp+1}}, \\
        \big\lfloor 2^{dp} \big\rfloor, \quad 1- \frac{2^{dp}+1}{2^{dp+1}} < \gamma \leq \frac{1}{2} ,
    \end{cases} \label{eq: k}
\end{align}
and divide the temporal edges into $k$ equally long intervals. We will show that the rectangles $P(\alpha)$ are well-defined under this definition. 

\begin{proposition} \label{prop: dyadic division}
    Let $R(\gamma) \subseteq \Rn$ be a parabolic rectangle with $0\leq \gamma \leq  1/2$. Then, the dyadic layer $\mathcal{D}_1\big(R(\gamma)\big)$ consists of parabolic rectangles $P(\alpha) \subseteq R(\gamma)$ with $0 \leq \alpha \leq 1/2$. In particular,
    \begin{align*}
        l_x\big(P(\alpha)\big) = \frac{1}{2^d} l_x\big(R(\gamma)\big)
    \end{align*}
    and
    \begin{align*}
        \frac{1}{2} \cdot \frac{1}{2^{dp}}  l_t\big(R(\gamma)\big) \leq l_t\big(P(\alpha)\big)\leq 2\cdot \frac{1}{2^{dp}} l_t\big(R(\gamma)\big).
    \end{align*}
\end{proposition}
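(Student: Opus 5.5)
The argument is a direct computation. The spatial part is immediate from \eqref{eq: lx}. For the temporal part, set $L = l_x(R)$ and $N = 2^{dp}$; then $l_t(R(\gamma)) = (1-\gamma)L^p$. Each $P(\alpha)$ obtained by dividing the temporal edge into $k$ equal intervals has $l_t(P(\alpha)) = (1-\gamma)L^p/k$, and it must be written as $(1-\alpha)(L/2^d)^p = (1-\alpha)L^p/N$. This forces
\begin{align*}
    1-\alpha = \frac{(1-\gamma)N}{k}.
\end{align*}
Hence the claim that each $P(\alpha)$ is a parabolic rectangle with $0\le\alpha\le 1/2$ is equivalent to
\begin{align*}
    \frac12 \le \frac{(1-\gamma)N}{k} \le 1.
\end{align*}
The rectangles are contained in $R(\gamma)$ and tile it by construction. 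Note that $N\ge 9$ by \eqref{eq: division rate}, so $\lfloor N\rfloor \ge 9$ is a positive integer and the division is meaningful.

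I would verify this in the two cases of \eqref{eq: k}. The threshold is $\gamma_0 = 1-\frac{N+1}{2N}$, that is, $1-\gamma_0 = \frac{N+1}{2N}$.

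In the first case, $0\le\gamma\le\gamma_0$ and $k=\lceil N\rceil$. The upper bound follows from $(1-\gamma)N \le N \le \lceil N\rceil$. For the lower bound, $(1-\gamma)N \ge \frac{N+1}{2} \ge \frac{\lceil N\rceil}{2}$, using $\lceil N\rceil \le N+1$.

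In the second case, $\gamma_0<\gamma\le 1/2$ and $k=\lfloor N\rfloor$. The upper bound follows from $(1-\gamma)N < \frac{N+1}{2} \le \lfloor N\rfloor$, where the last inequality holds because $\lfloor N\rfloor > N-1$ and $N\ge 3$ give $2\lfloor N\rfloor \ge N+1$; this is where $N\ge 9$ helps. For the lower bound, $(1-\gamma)N \ge N/2 \ge \lfloor N\rfloor/2$. In both cases we get $0\le\alpha\le 1/2$.

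Finally, for the comparison of temporal lengths, write $l_t(P(\alpha)) = l_t(R(\gamma))/k$. It then suffices that $N/2 \le k \le 2N$, which holds since $\lfloor N\rfloor \ge N-1 \ge N/2$ and $\lceil N\rceil \le N+1\le 2N$.

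The only delicate point is the upper bound in the $\lfloor N\rfloor$ case, which needs the threshold chosen exactly as in \eqref{eq: k} together with $N$ not too small. Everything else is elementary.
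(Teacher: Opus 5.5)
Your proof is correct and follows essentially the same route as the paper: you solve $1-\alpha=(1-\gamma)2^{dp}/k$ and check $0\le\alpha\le 1/2$ in the two cases defining $k$, and the only minor difference is that you get the temporal comparison from bounds on $k$ instead of the paper's observation that $(1-\alpha)/(1-\gamma)\in[1/2,2]$. You also make explicit the inequality $2\lfloor 2^{dp}\rfloor\ge 2^{dp}+1$, which the paper uses without comment in the $\lfloor 2^{dp}\rfloor$ case.
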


\begin{proof}
    Let $R(\gamma) \subseteq \Rn$ be a parabolic rectangle with $0\leq \gamma \leq  1/2$. We already showed in (\ref{eq: lx}) that the spatial side lengths of each $P(\alpha) \in \mathcal{D}_1\big(R(\gamma)\big)$ follow the dyadic division. Observe that the temporal side length is also uniquely determined by the spatial side length up to the parameter $0\leq \alpha <1$. Therefore, we obtain
    \begin{align}
        \begin{split}l_t\big(P(\alpha)\big) &= (1-\alpha) l_x\big(P(\alpha)\big)^p \\
        &= (1-\alpha) \frac{1-\gamma}{1-\gamma} \frac{1}{2^{dp}}l_x\big(R(\gamma)\big)^p \\
        &= \frac{1-\alpha}{1-\gamma}\frac{1}{2^{dp}} l_t\big(R(\gamma)\big).
        \end{split}\label{eq: dyadic division 1}
    \end{align}

    On the other hand, recalling $k$ from (\ref{eq: k}), dividing the temporal edges of $R(\gamma)$ into $k$ intervals implies that
    \begin{align*}
        l_t\big(P(\alpha)\big) = \frac{1}{k} l_t\big(R(\gamma)\big).
    \end{align*}
    Combining both expressions of the temporal side length fixes $\alpha$ as
    \begin{align*}
        \alpha = 1 - (1-\gamma)\frac{2^{dp}}{k}.
    \end{align*}

    We show now that $0\leq \alpha \leq 1/2$. We check first the case $0\leq \gamma \leq 1 - 2^{-(dp+1)}(2^{dp}+1)$, that is, $k =\big\lceil 2^{dp} \big\rceil $ by (\ref{eq: k}). We have the lower bound
    \begin{align*}
        \alpha = 1 - (1-\gamma)\frac{2^{dp}}{\big\lceil 2^{dp} \big\rceil} \geq 1 - \frac{2^{dp}}{2^{dp}} = 0,
    \end{align*}
    and the upper bound also follows as
    \begin{align*}
        \alpha = 1 - (1-\gamma)\frac{2^{dp}}{\big\lceil 2^{dp} \big\rceil} \leq 1 - \frac{2^{dp}+1}{2^{dp+1}}\frac{2^{dp}}{2^{dp} +1} = \frac{1}{2}. 
    \end{align*}
    For the case $ 1 - 2^{-(dp+1)}(2^{dp}+1) < \gamma \leq 1/2$, that is, $k =\big\lfloor 2^{dp} \big\rfloor $ by (\ref{eq: k}), we have the upper bound
    \begin{align*}
        \alpha = 1 - (1-\gamma)\frac{2^{dp}}{\big\lfloor 2^{dp} \big\rfloor} \leq 1 - \frac{1}{2}\frac{2^{dp}}{2^{dp}}  = \frac{1}{2},
    \end{align*}
    and the lower bound follows similarly as
    \begin{align*}
        \alpha = 1 - (1-\gamma)\frac{2^{dp}}{\big\lfloor 2^{dp} \big\rfloor} \geq 1 - \frac{2^{dp}+1}{2^{dp+1}} \frac{2^{dp}}{\big\lfloor 2^{dp} \big\rfloor} \geq  1 - \frac{2^{dp}+1}{2\big\lfloor 2^{dp} \big\rfloor}\geq 0.
    \end{align*}
    Since $\gamma,\alpha \in [0,1/2]$, by (\ref{eq: dyadic division 1}) we get 
        \begin{align*}
            \frac{1}{2} \cdot \frac{1}{2^{dp}}  l_t\big(R(\gamma)\big) \leq l_t\big(P(\alpha)\big)\leq 2\cdot \frac{1}{2^{dp}} l_t\big(R(\gamma)\big).
        \end{align*}
    \end{proof}

\subsection{Parabolic dyadic lattice}
In the rest of the section, we introduce the main tools of the paper. For any $R(\gamma_0) \in \Rn$ with $0\leq \gamma_0 \leq 1/2$, we use the first order dyadic layer $\mathcal{D}_1\big(R(\gamma_0)\big)$ to generate the whole dyadic lattice recursively. We set $\mathcal{D}_0\big(R(\gamma_0)\big) = \big\{ R(\gamma_0)\big\}$. Then, we recursively construct the dyadic lattice of any order by
\begin{align*}
    \mathcal{D}_{i+1}\big(R(\gamma_0)\big) = \bigcup_{P(\gamma)\in \mathcal{D}_i(R(\gamma_0))}\mathcal{D}_1\big(P(\gamma)\big),
\end{align*}
where $i \in \N$ and $0\leq \gamma \leq 1/2$. The different order dyadic subrectangles together define the whole dyadic lattice $\mathcal{D}(R(\gamma_0))$ in the parabolic geometry as
\begin{align*}
    \mathcal{D}\big(R(\gamma_0)\big) = \bigcup_{i=0}^\infty \mathcal{D}_i\big(R(\gamma_0)\big).
\end{align*} Thanks to Proposition \ref{prop: dyadic division} the definitions are valid. In particular, the side length of any $P(\gamma) \in \mathcal{D}_m\big(R(\gamma_0)\big)$ follows the dyadic scale.

\begin{corollary} \label{cor: dyadic scale}
    Let $R(\gamma_0) \subseteq \Rn$ be a parabolic rectangle with $0\leq \gamma_0 \leq  1/2$. Then, for any $i\in \N$ the dyadic layer $\mathcal{D}_i\big(R(\gamma_0)\big)$ consists of parabolic rectangles $P(\gamma) \subseteq R(\gamma_0)$ with $0 \leq \gamma \leq 1/2$. In particular,
    \begin{align*}
        l_x\big(P(\gamma)\big) = 2^{-id} l_x\big(R(\gamma_0)\big)
    \end{align*}
    and
    \begin{align*}
        \frac{1}{2} \cdot 2^{-idp}  l_t\big(R(\gamma_0)\big) \leq l_t\big(P(\gamma)\big)\leq 2\cdot 2^{-idp} l_t\big(R(\gamma_0)\big).
    \end{align*}
\end{corollary}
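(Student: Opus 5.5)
We argue by induction on $i$, using Proposition \ref{prop: dyadic division} as the inductive step. We do not iterate its temporal bound, because that would only give constants $2^{\pm i}$. Instead we read off the temporal side length directly from the spatial side length and the truncation parameter.

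For $i=0$ we have $\mathcal{D}_0\big(R(\gamma_0)\big)=\{R(\gamma_0)\}$, and all claims are trivial. Suppose the statement holds for some $i$. Every $P'(\gamma')\in\mathcal{D}_{i+1}\big(R(\gamma_0)\big)$ belongs to $\mathcal{D}_1\big(P(\gamma)\big)$ for some $P(\gamma)\in\mathcal{D}_i\big(R(\gamma_0)\big)$. By the induction hypothesis, $P(\gamma)\subseteq R(\gamma_0)$ and $0\le\gamma\le 1/2$. Hence Proposition \ref{prop: dyadic division} applies to $P(\gamma)$ and gives $P'(\gamma')\subseteq P(\gamma)\subseteq R(\gamma_0)$, $0\le\gamma'\le 1/2$, and
\[
l_x\big(P'(\gamma')\big)=2^{-d}\,l_x\big(P(\gamma)\big)=2^{-(i+1)d}\,l_x\big(R(\gamma_0)\big).
\]
This settles containment, the range of the truncation parameter, and the spatial side length for every order.

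For the temporal bound, we use the defining identity of truncated rectangles, $l_t\big(P(\gamma)\big)=(1-\gamma)\,l_x\big(P(\gamma)\big)^p$, both for $P(\gamma)\in\mathcal{D}_i\big(R(\gamma_0)\big)$ and for $R(\gamma_0)$ itself. Together with the spatial formula this gives
\[
l_t\big(P(\gamma)\big)=(1-\gamma)\,2^{-idp}\,l_x\big(R(\gamma_0)\big)^p=\frac{1-\gamma}{1-\gamma_0}\,2^{-idp}\,l_t\big(R(\gamma_0)\big).
\]
Since $\gamma,\gamma_0\in[0,1/2]$, we have $1-\gamma,\,1-\gamma_0\in[1/2,1]$. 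Therefore the ratio $(1-\gamma)/(1-\gamma_0)$ lies in $[1/2,2]$, which yields the asserted two-sided bound.

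The only point that needs care is this last step: the constants $1/2$ and $2$ must not accumulate with $i$. This is exactly why we bound the temporal length via the uniformly controlled truncation parameter, rather than by chaining the temporal estimate of Proposition \ref{prop: dyadic division} across levels.
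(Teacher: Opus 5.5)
Your proof is correct and follows essentially the same route as the paper: induction, with Proposition \ref{prop: dyadic division} supplying the truncation range and the spatial side length, and then $l_t$ read off from $(1-\gamma)\,l_x^p$. This gives the ratio $(1-\gamma)/(1-\gamma_0)\in[1/2,2]$, so the constants do not accumulate with $i$. You also state the containment $P'(\gamma')\subseteq P(\gamma)\subseteq R(\gamma_0)$ explicitly, which the paper leaves implicit.
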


\begin{proof}
    Let $R(\gamma_0)$ be a parabolic rectangle with $0 \leq \gamma_0 \leq 1/2$. We will prove the statement via induction. We start with the base case, that is, $i = 0$. Now, $\mathcal{D}_0\big(R(\gamma_0)\big) = \big\{R(\gamma_0)\big\}$, and thus the statement is clearly true.

    Then, fix $i \in \N$. Assume inductively that $\mathcal{D}_i\big(R(\gamma_0)\big)$ consists of parabolic rectangles $P(\gamma)$ with $0\leq \gamma \leq 1/2$ such that
    \begin{align*}
        l_x\big(P(\gamma)\big) = 2^{-id} l_x\big(R(\gamma_0)\big)
    \end{align*}
    for some $i \in \N$. Take any $Q(\alpha) \in \mathcal{D}_{i+1}\big(R(\gamma_0)\big)$ with $0 \leq \alpha < 1$. The definition of the dyadic lattice implies that there exists $P(\gamma) \in \mathcal{D}_i\big(R(\gamma_0)\big)$ such that $Q(\alpha) \in \mathcal{D}_1\big(P(\gamma)\big)$. By Proposition \ref{prop: dyadic division} $Q(\alpha)$ is a parabolic rectangle with the truncation parameter satisfying $0\leq \alpha \leq 1/2$, and the spatial side length follows
    \begin{align*}
        l_x\big(Q(\alpha)\big) = 2^{-d} l_x\big(P(\gamma)\big) = 2^{-(i+1)d} l_x\big(R(\gamma_0)\big).
    \end{align*}
    The spatial side length necessarily fixes the temporal side length. We get
    \begin{align*}
        l_t\big(Q(\alpha)\big) &= (1-\alpha) l_x\big(Q(\alpha)\big)^p \\
        &=(1-\alpha) 2^{-(i+1)dp} l_x\big(R(\gamma_0)\big)^p \\
        &= \frac{1-\alpha}{1-\gamma_0} \cdot 2^{-(i+1)dp} l_t\big(R(\gamma_0)\big).
    \end{align*}
    Since $\gamma_0,\alpha \in [0,1/2]$, the above simplifies into
    \begin{align*}
        \frac{1}{2} \cdot 2^{-(i+1)dp}  l_t\big(R(\gamma_0)\big) \leq l_t\big(Q(\alpha)\big)\leq 2\cdot 2^{-(i+1)dp} l_t\big(R(\gamma_0)\big),
    \end{align*}
    finishing the proof.
\end{proof}

\subsection{Parabolic dyadic lattice for time-strips}
To study the effects evolving in time, we also introduce an extension of the dyadic lattice of a parabolic rectangle. We extend the dyadic lattice of $R(\gamma_0)$ of any order $i \in \N$ into the time-strip
$\text{pr}_x\big(R(\gamma_0)\big)\times \R \subseteq \Rn$ by
\begin{align*}
    \mathcal{D}_i^{\text{ext}}\big(R(\gamma_0)\big) = \bigcup_{j\in \Z}\mathcal{D}_i\big(R(\gamma_0) + (0,jl_t\big(R(\gamma_0)\big)\big).
\end{align*}
Similarly, we define the whole extended dyadic lattice as
\begin{align*}
    \mathcal{D}^{\text{ext}}\big(R(\gamma_0)\big) = \bigcup_{i=0}^\infty\mathcal{D}_i^{\text{ext}}\big(R(\gamma_0)\big).
\end{align*}
The extended dyadic lattice is thus closed under integer translations. In other words, if $P(\gamma) \in \mathcal{D}^{\textnormal{ext}}\big(R(\gamma_0)\big)$, then $P^\theta(\gamma) \in \mathcal{D}^{\textnormal{ext}}\big(R(\gamma_0)\big)$ for every $\theta \in \Z$.

On the other hand, the dyadic structure motivates the concept of dyadic parent rectangles. By the definition, for every $P(\gamma) \in \mathcal{D}^{\textnormal{ext}}\big(R(\gamma_0)\big)$ such that $l_x\big(P(\gamma)\big) < l_x\big(R(\gamma_0)\big)$, there exists the unique dyadic parent $Q(\alpha) \in \mathcal{D}^{\textnormal{ext}}\big(R(\gamma_0)\big)$ with $0\leq \alpha \leq 1/2$ such that $P(\gamma) \in \mathcal{D}_1\big(Q(\alpha)\big)$. To denote the dyadic parent, we use the convention
\begin{align*}
    \pi P(\gamma) = Q(\alpha).
\end{align*}
Moreover, if $P(\gamma) \in \mathcal{D}_m^{\textnormal{ext}}\big(R(\gamma_0)\big)$ for some $m \in \N$, we define recursively the higher order parents as
\begin{align*}
    \pi_0 P(\gamma) = P(\gamma) \quad \text{and} \quad \pi_{i+1} P(\gamma) = \pi \big(\pi_i P(\gamma)\big)
\end{align*}
for every $i = 0, 1,\dots,m-1$. Note that $\pi P(\gamma) = \pi_1P(\gamma)$. The parent operator is applied before any translation $\theta \in \R$, however, we will write
\begin{align*}
    \pi_i P^\theta(\gamma) = \pi_i P(\gamma) + \big(0,\theta l_t\big(\pi_i P(\gamma)\big)\big).
\end{align*}
for any $i \in \N$ such that the higher order parent is well-defined.

The concept of taking the dyadic parent and then applying a translation appears frequently in the paper. Thus, it makes sense to define the forward-in-time parent as  
\begin{align}
    \pi^+P(\gamma) = \pi P^{\theta_0}(\gamma), \label{eq: theta0}
\end{align}
where integer $\theta_0 \geq 2 $ is some fixed default translation or time-lag. Observe that if $P(\gamma) \in \mathcal{D}^{\textnormal{ext}}\big(R(\gamma_0)\big)$ with  $l_x\big(P(\gamma)\big) < l_x\big(R(\gamma_0)\big)$, then there does exist $\pi^+P(\gamma) \in \mathcal{D}^{\textnormal{ext}}\big(R(\gamma_0)\big)$. While there is some room how to choose this parameter $\theta_0$, it has to be chosen appropriately for our methods. We will fix the parameter in Section \ref{sec: stopping conditions}. The choice is based on various geometric aspects appearing later in the paper, see Lemma \ref{lem: parameters}. The higher order forward-in-time parents are also defined recursively. If $P(\gamma) \in \mathcal{D}_m^{\textnormal{ext}}\big(R(\gamma_0)\big)$ for some $m \in \N$, then
\begin{align*}
    \pi_0^+P(\gamma) = P(\gamma) \quad \text{and} \quad \pi_{i+1}^+P(\gamma) = \pi^+\big(\pi_{i}^+P(\gamma)\big)
\end{align*}
for every $i= 0, 1, \dots, m-1$. Similarly as for the standard parent operator, the forward-in-time parent operator is applied before translation, that is, we will write
\begin{align*}
    \pi_i^+ P^\theta(\gamma) = \pi_i^+ P(\gamma) + \big(0,\theta l_t\big(\pi_i P(\gamma)\big)\big) = \pi_i P^{\theta_0 + \theta}(\gamma)
\end{align*}
for any $\theta \in \R$.

\subsection{Properties of parabolic dyadic lattices}
We will mention the most important properties of $\mathcal{D}\big(R(\gamma_0)\big)$ and consequently $\mathcal{D}^{\textnormal{ext}}\big(R(\gamma_0)\big)$. We will not prove these properties as they either follow easily from previous results or are naturally expected from dyadic structures. These include:
\begin{itemize}
\item Covering, that is, for any $m \in \N$
\begin{align*}
    \bigcup_{P(\gamma) \in \mathcal{D}_m(R(\gamma_0))}P(\gamma) = R(\gamma_0).
\end{align*}

\item Similarity, that is, if $P(\gamma),Q(\alpha) \in \mathcal{D}_m\big(R(\gamma_0)\big)$ with $\gamma,\alpha \in [0,1/2]$ for some $m\in \N$, then they are the same up to translation. In particular, $\gamma = \alpha$.

\item Nestedness, that is, for every two rectangles in $\mathcal{D}\big(R(\gamma_0)\big)$ either one of them is contained in the other or they are disjoint.

\item Finite chain of ancestors, that is, for every $P(\gamma) \in \mathcal{D}\big(R(\gamma_0)\big)$ with $0\leq \gamma \leq 1/2$ there exists $m\in \N$ such that $P(\gamma) \in \mathcal{D}_m\big(R(\gamma_0)\big)$ and
\begin{align*}
    P(\gamma) \subset \pi P(\gamma) \subset \pi_2 P(\gamma) \subset \dots \subset \pi_m P(\gamma) = R(\gamma_0).
\end{align*}

\item Comparability to the parent, that is, if $P(\gamma) \in \mathcal{D}\big(R(\gamma_0)\big)$ with $0\leq \gamma \leq 1/2$ such that $P(\gamma) \subset R(\gamma_0)$, then
\begin{align*}
    \frac{1}{2}|\pi_i P(\gamma)| \leq 2^{d(n+p)i}|P(\gamma)| \leq 2 |\pi_i P(\gamma)|
\end{align*}
for every $i \in \N$, given that the parent rectangle is well-defined.

\item Approximation, that is, if $P(\gamma) \subseteq R(\gamma_0)$ with $0\leq \gamma \leq 1/2$, then there exists $Q(\alpha) \in \mathcal{D}\big(R(\gamma_0)\big)$ with $0\leq \alpha \leq 1/2$ such that $Q(\alpha) \subseteq P(\gamma)$ and
\begin{align*}
    |Q(\alpha)| \geq \Big(\frac{1}{4}\Big)^{d(n+p)} |P(\gamma)|.
\end{align*}
\end{itemize}

We also briefly discuss the properties unique to $\mathcal{D}^{\textnormal{ext}}\big(R(\gamma_0)\big)$. Namely, the forward-in-time operator induces chains of parabolic rectangles. These chains are an integral component of the main results. We introduce a short lemma for the forward-in-time parent operator, which is used several times in Section \ref{sec: stopping conditions}. This lemma will give an upper bound for the temporal length of the chains.
\begin{lemma} \label{lem: bounded distance}
    Let $R(\gamma_0) \subseteq \Rn$ be a parabolic rectangle with $0\leq \gamma_0 \leq 1/2$, and let $\theta_0 \geq 2$ be an integer. If $m\geq 1$ and $P(\gamma) \in \mathcal{D}_m^{\textnormal{ext}}\big(R(\gamma_0)\big)$ with $0\leq \gamma \leq 1/2$, then
    \begin{align*}
        \dist_t \big(\partial_{\textnormal{low}}P(\gamma), \partial_{\textnormal{low}}\pi_j^+P(\gamma)\big)  < 2\theta_0\frac{2^{dp}}{2^{dp}-1}l_t\big(\pi_{j}^+P(\gamma)\big).
    \end{align*}
    for every $j =1,2,\dots,m$.
\end{lemma}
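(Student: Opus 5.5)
Write $P_i=\pi_i^+P(\gamma)$ for $i=0,\dots,m$, so $P_0=P(\gamma)$ and $P_{i+1}=\pi P_i^{\theta_0}$. The plan is to estimate the distance of the lower faces one step at a time and then add up the steps with the triangle inequality in time:
\begin{align*}
\dist_t\big(\partial_{\textnormal{low}}P_0,\partial_{\textnormal{low}}P_j\big)\le \sum_{i=0}^{j-1}\dist_t\big(\partial_{\textnormal{low}}P_i,\partial_{\textnormal{low}}P_{i+1}\big).
\end{align*}

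For a single step, first translate $P_i$ to $P_i^{\theta_0}$, which moves the lower face by exactly $\theta_0 l_t(P_i)$. Then pass to the parent $\pi P_i^{\theta_0}$. This rectangle contains $P_i^{\theta_0}$, so its lower face lies below the lower face of $P_i^{\theta_0}$ by at most $l_t(P_{i+1})-l_t(P_i)<l_t(P_{i+1})$. Here I use that translation does not change temporal length, and that $P_{i+1}$ is a translate of a genuine dyadic parent, so its temporal length is that of $\pi P_i$. The parent has $2^{d}$ times the spatial side length of $P_i$. Both are truncated with parameters in $[0,1/2]$, so Proposition \ref{prop: dyadic division} gives $l_t(P_i)\le 2\cdot 2^{-dp}l_t(P_{i+1})$. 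One step therefore contributes at most
\begin{align*}
\theta_0 l_t(P_i)+l_t(P_{i+1})\le \big(2\theta_0 2^{-dp}+1\big)l_t(P_{i+1})\le \theta_0 l_t(P_{i+1}),
\end{align*}
where the last inequality uses $\theta_0\ge2$ and $2^{dp}\ge 9$ from \eqref{eq: division rate}. The bound can be written more crudely as $2\theta_0 l_t(P_{i+1})$ if that is more convenient.

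Next, iterating Proposition \ref{prop: dyadic division} (or Corollary \ref{cor: dyadic scale}) along the chain is tempting, but it loses a factor $2$ at every step. To avoid this, I compare directly with the top of the chain. All $P_i$ lie in the extended lattice, with spatial sides $l_x(P_i)=2^{-(j-i)d}l_x(P_j)$ and truncation parameters $\gamma_i,\gamma_j\in[0,1/2]$. Hence
\begin{align*}
l_t(P_i)=\frac{1-\gamma_i}{1-\gamma_j}2^{-(j-i)dp}l_t(P_j)\le 2\cdot 2^{-(j-i)dp}l_t(P_j).
\end{align*}
The factor $2$ now appears only once. Summing the single-step bounds, the $i=j-1$ term is at most $\theta_0 l_t(P_j)$, and the remaining terms give a geometric series with ratio $2^{-dp}$. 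The total is
\begin{align*}
\theta_0 l_t(P_j)\Big(1+2\sum_{k\ge1}2^{-kdp}\Big)\le 2\theta_0\sum_{k\ge0}2^{-kdp}\,l_t(P_j)=2\theta_0\frac{2^{dp}}{2^{dp}-1}l_t(P_j).
\end{align*}

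The main obstacle is the bookkeeping in the single-step estimate, in particular keeping the inequality strict. Strictness comes from $l_t(P_{i+1})-l_t(P_i)<l_t(P_{i+1})$, which holds because $l_t(P_i)>0$; alternatively, the crude geometric-sum bound already loses a positive amount. The only other point needing care is that the translation commutes with $l_t$, which is immediate from the definitions.
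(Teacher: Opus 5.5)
Your overall plan is the paper's. You split the displacement of the lower faces into consecutive steps along the chain and bound each step by $\theta_0\, l_t(\pi_{i+1}^+P(\gamma))$. You then compare every $l_t(\pi_i^+P(\gamma))$ directly with $l_t(\pi_j^+P(\gamma))$ through the truncation parameters, so the factor $2$ enters only once, and sum a geometric series. Your final bookkeeping is correct. It is marginally sharper than the paper's, which puts the factor $2$ on the top term as well.

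The single-step argument, however, is carried out for the wrong rectangle. You read $\pi P_i^{\theta_0}$ as the parent of the translate $P_i+(0,\theta_0 l_t(P_i))$. The paper states explicitly that the parent operator is applied before the translation: $\pi^+P_i=\pi P_i+(0,\theta_0\, l_t(\pi P_i))$, i.e. the parent is shifted by $\theta_0$ times its own temporal length. Under this definition $\pi_{i+1}^+P(\gamma)$ does not contain $P_i^{\theta_0}$ in general. So your sentence ``this rectangle contains $P_i^{\theta_0}$'' fails, and with it the estimate $\theta_0 l_t(P_i)+l_t(P_{i+1})$ for the step.

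Your reading is also incompatible with how $\pi^+$ is used later. With $\theta_0=4$ and $2^{dp}\ge 9$, the translate $P_i^{\theta_0}$ stays inside $\pi P_i$ whenever $P_i$ sits in a low temporal slot of its parent, so your $\pi^+$ would then coincide with $\pi$. This contradicts, e.g., $\pi_m^+P(\gamma)=R^{\theta_1}(\gamma_0)$ with $\theta_1\ge\theta_0$ in Lemma \ref{lem: finite termination}.

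The repair is immediate and even simpler. Since $P_i\subseteq \pi P_i$, the lower face of $\pi P_i$ lies below that of $P_i$ by at most $l_t(P_{i+1})-l_t(P_i)$. Shifting by $\theta_0 l_t(P_{i+1})$ then shows that the lower face moves forward by an amount in $((\theta_0-1)l_t(P_{i+1}),\,\theta_0 l_t(P_{i+1})]$. This gives your per-step bound $\theta_0 l_t(P_{i+1})$ directly, without needing $2^{dp}\ge 9$. Since every step moves forward in time, your triangle inequality is actually an equality, as the paper writes it. With this correction the rest of your proof goes through unchanged.
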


\begin{proof}
     Fix integers $1\leq j \leq m$ and let $P(\gamma) \in \mathcal{D}_m^{\textnormal{ext}}\big(R(\gamma_0)\big)$ with $0\leq \gamma \leq 1/2$. Assuming $i = 0,1,\dots,j$, then by Corollary \ref{cor: dyadic scale} the side lengths of the forward-in-time parent rectangles are comparable as
     \begin{align*}
         l_t\big(\pi_i^+ P(\gamma)\big) \leq 2\cdot 2^{(i-j)dp}l_t\big(\pi_{j-i}^+\big(\pi_i^+ P(\gamma)\big)\big) =  2^{(i-j)dp +1}l_t\big(\pi_j^+P(\gamma)\big).
     \end{align*}
    On the other hand, the distance between any consecutive forward-in-time parent rectangles is bounded by
    \begin{align*}
        \dist_t\big(\partial_{\textnormal{low}}\pi_{i}^+P(\gamma),\partial_{\textnormal{low}}\pi_{i+1}^+P(\gamma)\big) &\leq \theta_0 l_t\big(\pi_{i+1}^+P(\gamma)\big) \\
        &\leq 2^{(1-j)dp +1}\theta_0\cdot 2^{idp}l_t\big(\pi_j^+ P(\gamma)\big),
    \end{align*}
    for any $i =0,1,\dots, j -1 $. We split the distance between $P(\gamma)$ and $\pi_j^+P(\gamma)$ into distances between $\pi_{i}^+P(\gamma)$ and $\pi_{i+1}^+P(\gamma)$ to obtain
    \begin{align*}
        \dist_t\big(\partial_{\textnormal{low}}P(\gamma),\partial_{\textnormal{low}}\pi_j^+P(\gamma)\big) &= \sum_{i=0}^{j-1} \dist_t\big(\partial_{\textnormal{low}}\pi_i^+P(\gamma),\partial_{\textnormal{low}}\pi_{i+1}^+P(\gamma)\big)  \\
        &\leq \sum_{i=0}^{j-1} 2^{(1-j)dp +1}\theta_0\cdot 2^{idp}l_t\big(\pi_j^+ P(\gamma)\big) \\
        &= 2^{(1-j)dp +1}\theta_0\frac{2^{jdp}-1}{2^{dp}-1} l_t\big(\pi_j^+ P(\gamma)\big)\\
        &<2\theta_0\frac{2^{dp}}{2^{dp}-1} l_t\big(\pi_j^+P(\gamma)\big),
    \end{align*}
    finishing the proof.
\end{proof}

\section{Parabolic weak porosity and Muckenhoupt classes} \label{sec: parabolic weak porosity}

In this section we introduce the concept of the parabolic weak porosity. We shall focus on the basic properties of parabolic weakly porous sets, while briefly tackling some of the main results. In particular, we show the first direction of our main result, that is, parabolic Muckenhoupt distance weights induce parabolic weakly porous sets. We first go through couple concepts that are used to define the parabolic weak porosity.

\subsection{Maximal dyadic subrectangles}
Given a nonempty closed set $E \subseteq \Rn$, we say that a set $A \subseteq \Rn$ is $E$-free if $A \cap E = \emptyset$. Since $E$ is closed, then $\dist_p\big((x,t),E\big) >0$ for any point in $(x,t) \in \Rn \setminus E$. Hence, there will always exist some $E$-free ball under the parabolic distance metric. This also implies that for any $0\leq \gamma \leq 1/2$ there exists an $E$-free parabolic rectangle $P(\gamma) \in \Rn$ such that $(x,t) \in P(\gamma)$.

In particular, if $(x,t) \in R(\gamma_0) \setminus E$ for some, possibly not $E$-free, parabolic rectangle $R(\gamma_0) \subseteq \Rn$ with $0\leq \gamma_0 \leq 1/2$,  then by the approximation properties of the dyadic lattice, $(x,t)$ is contained in some $E$-free dyadic $P(\gamma) \in \mathcal{D}_m\big(R(\gamma_0)\big)$ with $0 \leq \gamma \leq 1/2$ and $m \in \N$. Moreover, there exists a finite chain of ancestors $\pi_i P(\gamma)$ for $i = 0,1,\dots, m$, so there also exists the maximal $E$-free dyadic $P_{(x,t)}(\alpha) \in \mathcal{D}\big(R(\gamma_0)\big)$ with $0 \leq \alpha \leq 1/2$ such that $(x,t) \in P_{(x,t)}(\alpha)$. We denote the collection of the maximal $E$-free dyadic rectangles by
\begin{align}
    \mathcal{F}\big(R(\gamma_0)\big) =  \big\{P_{(x,t)}(\alpha) \; :\; (x,t) \in R(\gamma_0) \setminus E\big\}. \label{eq: pre F}
\end{align}

On the other hand, if we are given just a parabolic rectangle $R(\gamma_0)$, we can also consider its largest $E$-free dyadic subrectangle. 
\begin{definition}
    Suppose $E \subseteq \Rn$ is nonempty closed set, and let $R(\gamma_0) \subseteq \Rn$ be a parabolic rectangle with $0\leq \gamma_0 \leq 1/2$. Then, $\mathcal{M}\big(R(\gamma_0)\big)\in\mathcal{D}\big(R(\gamma_0)\big)$ is a largest $E$-free dyadic subrectangle, that is, $\mathcal{M}\big(R(\gamma_0)\big)\in\mathcal{F}\big(R(\gamma_0)\big)$ and it satisfies
    \begin{align*}
        l_x\big(P(\gamma)\big) \leq l_x\big(\mathcal{M}\big(R(\gamma_0)\big)\big) 
    \end{align*}
    for every $P(\gamma) \in \mathcal{F}\big(R(\gamma_0)\big)$ with $0\leq \gamma \leq 1/2$. If $\mathcal{F}\big(R(\gamma_0)\big) = \emptyset$, then $\mathcal{M}\big(R(\gamma_0)\big) = \emptyset$ for completeness.
\end{definition}

The side length of the maximal $E$-free dyadic subrectangles are in the elliptic case comparable to the essential supremum of the distance function. Hence, it is quite expected that similar results would hold with the parabolic distance metric. We have the following lemma for the pointwise maximal $E$-free rectangles and the maximal hole function $\mathcal{M}$.

\begin{proposition} \label{prop: maximal hole}
    Suppose $E \subseteq \Rn$ is a nonempty closed set and $R(\gamma_0) \subseteq \Rn$ is a parabolic rectangle with $0\leq \gamma_0 \leq 1/2$ such that $R(\gamma_0) \cap E \neq \emptyset$. Then, the following are true:
    \begin{enumerate}[label=(\roman*)]
    \item Every $P(\gamma) \in \mathcal{F}\big(R(\gamma_0)\big)$ with $0\leq \gamma \leq 1/2$ satisfies
    \begin{align*}
        \frac{1}{2} l_x\big(P(\gamma)\big) \leq \esssup_{(x,t)\in P(\gamma)}\dist_p\big((x,t),E \big) \leq 2^d l_x\big(P(\gamma)\big).
    \end{align*} \label{item: maximal hole 1}
    
    \item The maximal $E$-free subrectangle $\mathcal{M}\big(R(\gamma_0)\big) \in \mathcal{F}\big(R(\gamma_0)\big)$ satisfies
    \begin{align*}
        \frac{1}{2} l_x\big(\mathcal{M}\big(R(\gamma_0)\big)\big) \leq \esssup_{(x,t)\in R(\gamma_0)}\dist_p\big((x,t),E \big) \leq 2^d l_x\big(\mathcal{M}\big(R(\gamma_0)\big)\big).
    \end{align*} \label{item: maximal hole 2}
    \end{enumerate}
    Moreover, the leftmost inequalities apply also when $R(\gamma_0) \cap E = \emptyset$.
\end{proposition}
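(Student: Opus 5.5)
Both parts reduce to elementary geometry of the parabolic metric $\dist_p$, together with maximality of $E$-free dyadic rectangles. I would prove \ref{item: maximal hole 1} first and then deduce \ref{item: maximal hole 2}.

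\textbf{Lower bound in \ref{item: maximal hole 1}.} Let $P(\gamma)=Q(x_0,L)\times[t_0-L^p,t_0-\gamma L^p)$ with $L=l_x(P(\gamma))$. Since $P(\gamma)$ is $E$-free, any point $(x,t)\in P(\gamma)$ with $\dist_p$-distance at least $L/2$ from $\Rn\setminus P(\gamma)$ satisfies $\dist_p((x,t),E)\geq L/2$. Because $\gamma\le 1/2$, the temporal length is $(1-\gamma)L^p\ge L^p/2$.
\begin{itemize}
\item Spatially, points near the center $x_0$ have sup-distance close to $L/2$ from the boundary of $Q(x_0,L)$.
\item Temporally, the midpoint of the time interval has distance $\ge L^p/4$ from its ends, so its parabolic time-distance is $\ge 4^{-1/p}L$.
\end{itemize}
The worry is that $4^{-1/p}L$ is not $\ge L/2$ for every $p>1$; it works only when $4^{-1/p}\ge 1/2$, i.e.\ $p\ge 2$. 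I would therefore refine the argument: the essential supremum only needs points arbitrarily close to optimal, and $\dist_p$ is a maximum of the two coordinates. A point outside $P(\gamma)$ is either spatially outside the cube, giving distance $\ge L/2-\varepsilon$ from a central point, or temporally outside the interval. Placing the point at the temporal midpoint gives time distance $\ge(1-\gamma)L^p/2\ge L^p/4$, hence parabolic time-distance $\ge 4^{-1/p}L>L/4$. So the robust constant is $\min\{1/2,4^{-1/p}\}$. I expect that with $p>1$ one must check whether $4^{-1/p}\ge 1/2$ genuinely fails for $p<2$; if it does, the stated $1/2$ needs either the specific geometry or a corrected constant. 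I would verify the paper's convention here first, and this is the step I expect to be the main (if minor) obstacle. In either case the essential-supremum statement follows because the distance function is continuous, so the essential supremum is at least its value on an open set near the chosen central point. This argument does not use $R(\gamma_0)\cap E\neq\emptyset$, which proves the final remark about the leftmost inequalities.

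\textbf{Upper bound in \ref{item: maximal hole 1}.} Here I use maximality. Since $R(\gamma_0)\cap E\ne\emptyset$, the rectangle $P(\gamma)$ is not $R(\gamma_0)$ itself, so its parent $\pi P(\gamma)$ exists. By maximality $\pi P(\gamma)$ meets $E$, and since $E$ is closed it meets the closure. By Proposition \ref{prop: dyadic division}, $\pi P(\gamma)$ has spatial side $2^dL$ and temporal side at most $(2^dL)^p$. Hence any two points of its closure have $\dist_p$ at most $2^dL$. For every $(x,t)\in P(\gamma)\subseteq\pi P(\gamma)$ this gives $\dist_p((x,t),E)\le 2^dL$.

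\textbf{Part \ref{item: maximal hole 2}.} The lower bound follows from \ref{item: maximal hole 1} applied to $\mathcal M(R(\gamma_0))\subseteq R(\gamma_0)$. For the upper bound, note that $R(\gamma_0)\setminus E$ is covered by the rectangles in $\mathcal F(R(\gamma_0))$, and $E$ itself contributes distance $0$. Every $(x,t)\in R(\gamma_0)\setminus E$ lies in some $P_{(x,t)}(\alpha)\in\mathcal F(R(\gamma_0))$. By \ref{item: maximal hole 1}, $\dist_p((x,t),E)\le 2^d l_x(P_{(x,t)}(\alpha))\le 2^d l_x(\mathcal M(R(\gamma_0)))$. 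Taking the supremum over $(x,t)$ finishes the proof.
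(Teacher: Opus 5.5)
Your upper bound in part (i) and both halves of part (ii) are exactly the paper's argument. For the upper bound, maximality forces $\pi P(\gamma)\cap E\neq\emptyset$, and $\text{diam}_p(\pi P(\gamma))=2^d l_x(P(\gamma))$. Your appeal to continuity of $\dist_p(\cdot,E)$, to pass from the value at the center to the essential supremum, fills in a step the paper leaves implicit.

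For the lower bound your suspicion is right, and the step cannot be closed: with the constant $\tfrac12$ the inequality is false when $1<p<2$. Take $\gamma_0=\tfrac12$, $R(\gamma_0)=Q(0,1)\times[-1,-\tfrac12)$ and $E=\R^n\times\{-\tfrac12\}\cup\R^n\times\{-1-\varepsilon\}$. Then $R(\gamma_0)$ is $E$-free, so $\mathcal{M}(R(\gamma_0))=R(\gamma_0)$ has $l_x=1$. Yet for every $(x,t)\in R(\gamma_0)$
\[
\dist_p\big((x,t),E\big)=\min\big\{t+1+\varepsilon,\,-\tfrac12-t\big\}^{1/p}\le\Big(\frac14+\frac{\varepsilon}{2}\Big)^{1/p}<\frac12
\]
for small $\varepsilon>0$, which contradicts the ``moreover'' clause. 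A counterexample to part (i) with $R(\gamma_0)\cap E\neq\emptyset$ is obtained the same way. Place the two hyperplanes at the upper edge and just below the lower edge of a child $P(\tfrac12)\in\mathcal{D}_1(R(\tfrac12))$ outside the top row; children of $R(\tfrac12)$ have truncation parameter $\tfrac12$ whenever $2^{dp}\in\N$, e.g.\ $p=\tfrac32$, $d=4$.

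The paper's proof reaches $\tfrac12$ only by asserting $\dist_p((x_0,t_0),\partial P(\gamma))\ge\max\{\tfrac12 l_x(P(\gamma)),(\tfrac12 l_t(P(\gamma)))^{1/p}\}$. Since $\dist_p$ is a maximum of coordinate distances, the distance from the center to the boundary is the minimum of these two quantities, which is precisely what you computed. So do not look for a convention or a dyadic feature that rescues $\tfrac12$. Instead, state and prove
\[
\esssup_{(x,t)\in P(\gamma)}\dist_p\big((x,t),E\big)\ge\min\Big\{\frac12,\Big(\frac{1-\gamma}{2}\Big)^{1/p}\Big\}\,l_x\big(P(\gamma)\big)\ge\min\big\{\tfrac12,4^{-1/p}\big\}\,l_x\big(P(\gamma)\big)>\frac14\,l_x\big(P(\gamma)\big),
\]
and the same for $\mathcal{M}(R(\gamma_0))$ in part (ii). This coincides with the claimed bound for $p\ge 2$. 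Downstream only constants change (for instance $2^{\alpha(n+p)}$ becomes $4^{\alpha(n+p)}$ in the proof of Theorem \ref{theo: A1 => wp}), so the main results are unaffected.
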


\begin{proof}
    \ref{item: maximal hole 1} Let $P(\gamma) \in  \mathcal{F} = \mathcal{F}\big(R(\gamma_0)\big)$ with $0\leq \gamma \leq 1/2$. Denote $(x_0,t_0) \in P(\gamma)$ the center point of $ P(\gamma)$. It follows that the largest distance between any $(x,t) \in P(\gamma)$ and $E$ is bounded below by
    \begin{align*}
        \esssup_{(x,t)\in P(\gamma)}  \dist_p\big((x,t),E\big) &\geq \dist_p\big((x_0,t_0),E\big) \geq \dist_p\big((x_0,t_0),\partial P(\gamma) \big) \\
        &\geq \max \bigg\{ \frac{1}{2}l_x\big(P(\gamma)\big), \Big(\frac{1}{2}l_t\big(P(\gamma)\big) \Big)^\frac{1}{p} \bigg\} \\
        &= \max \bigg\{ \frac{1}{2}, \Big(\frac{1-\gamma}{2} \Big)^\frac{1}{p} \bigg\}l_x\big(P(\gamma)\big) \\
        &\geq \frac{1}{2} l_x\big(P(\gamma)\big),
    \end{align*}
    proving the first inequality.
    
    For the second inequality, observe that $l_x\big(P(\gamma)\big) < l_x\big(R(\gamma_0)\big)$ since $R(\gamma_0) \cap E \neq \emptyset$. Therefore, by maximality of $P(\gamma)$, we have $\pi P(\gamma) \cap E \neq \emptyset$. It follows that the parabolic distance between any $(x,t) \in P(\gamma)$ and $E$ is then bounded by the parabolic diameter of the parent rectangle $\pi P(\gamma)$. We get
    \begin{align*}
        \dist_p\big((x,t),E\big) &\leq \text{diam}_p\big(\pi P(\gamma)\big) = l_x\big(\pi P(\gamma)\big) = 2^d l_x\big(P(\gamma)\big).
    \end{align*}
    Taking the essential supremum on the left hand side yields
    \begin{align*}
        \esssup_{(x,t)\in P(\gamma)}  \dist_p\big((x,t),E\big) \leq 2^d l_x\big(P(\gamma)\big),
    \end{align*}
    proving the second inequality.

    \ref{item: maximal hole 2} Let us take the first inequality of item \ref{item: maximal hole 1}, that is,
    \begin{align*}
        \frac{1}{2} l_x\big(P(\gamma)\big) \leq \esssup_{(x,t)\in P(\gamma)}\dist_p\big((x,t),E \big).
    \end{align*}
    The essential supremum can be extended to the whole $R(\gamma_0)$ as
    \begin{align*}
        \frac{1}{2} l_x\big(P(\gamma)\big) \leq \esssup_{(x,t)\in R(\gamma_0)}  \dist_p\big((x,t),E\big),
    \end{align*}
    On the other hand, since the right hand side does not depend on $P(\gamma)$,we take the maximum over every $P(\gamma) \in \mathcal{F}$ to obtain
    \begin{align*}
        \frac{1}{2} l_x\big(\mathcal{M}\big(R(\gamma_0)\big)\big) = \max_{P(\gamma) \in \mathcal{F}} \frac{1}{2} l_x\big(P(\gamma)\big) \leq \esssup_{(x,t)\in R(\gamma_0)}  \dist_p\big((x,t),E\big),
    \end{align*}
    proving the first inequality of \ref{item: maximal hole 2}.

     Let us then take the second inequality of item \ref{item: maximal hole 1}, that is,
    \begin{align*}
        \esssup_{(x,t)\in P(\gamma)}\dist_p\big((x,t),E \big) \leq 2^d l_x\big(P(\gamma)\big).
    \end{align*}
    We can clearly replace the side length of $P(\gamma)$ with the maximal $E$-free subrectangle to get
    \begin{align*}
        \esssup_{(x,t)\in P(\gamma)}  \dist_p\big((x,t),E\big) \leq  2^d l_x\big(\mathcal{M}\big(R(\gamma_0)\big)\big).
    \end{align*}
    Since the right hand side does not depend on $P(\gamma)$, we can expand the essential supremum over the union of every $P(\gamma) \in \mathcal{F}$. However, we observe
    \begin{align*}
        (x,t) \in \bigcup_{P(\gamma)\in \mathcal{F}}P(\gamma) = R(\gamma_0) \setminus E.
    \end{align*}
    Note that if $(x,t) \in R(\gamma_0) \cap E$, then $ \dist_p\big((x,t),E\big) = 0$. We get
    \begin{align*}
        \esssup_{(x,t)\in R(\gamma_0)} \dist_p\big((x,t),E\big) \leq  2^d l_x\big(\mathcal{M}\big(R(\gamma_0)\big)\big),
    \end{align*}
    finishing the proof.
\end{proof}

\subsection{Parabolic weak porosity}

Following the steps of \cite{ALMV24}, the parabolic weak porosity is defined similarly. The key differences to the definition in the elliptic case are the parabolic geometry via the usage of parabolic dyadic lattice and the time-lag in the maximal hole function.
\begin{definition} \label{def: pwp general}
    A nonempty closed set $E \subseteq \Rn$ is $(c,\delta,\theta)$-weakly porous for $c,\delta \in (0,1)$ and $\theta \in \R$ if for every parabolic rectangle $R(\gamma_0) \subset\mathbb{R}^{n+1}$ with $0 \leq \gamma_0 \leq 1/2$ there exist pairwise disjoint rectangles $P_k(\gamma_k)\in \mathcal{D}\big(R(\gamma_0)\big)$ with $0\leq \gamma_k \leq 1/2$ for $k=1,\dots,N$, such that they are $E$-free,
    \begin{align*}
        \lvert P_k(\gamma_k) \rvert \geq \delta \lvert \mathcal{M}\big(R^\theta(\gamma_0)\big) \rvert
    \end{align*}
     and satisfy
    \[
    \sum_{k=1}^N \lvert P_k(\gamma_k) \rvert \geq c \lvert R(\gamma_0) \vert.
    \]
    In this context, we say that a subrectangle $P(\gamma) \in \mathcal{D}\big(R(\gamma_0)\big)$ is $\delta$-admissible if it is $E$-free and $\lvert P(\gamma) \rvert \geq \delta \lvert \mathcal{M}\big(R^\theta(\gamma_0)\big) \rvert$.
\end{definition}

The time-lag appears in the definition via the parameter $\theta \in \R$ as we evaluate the maximal $E$-free hole in the translated parent $R^\theta(\gamma_0)$ instead of $R(\gamma_0)$. When $\theta >1$, that is, there is nonzero time-lag between the $R(\gamma_0)$ and $R^\theta(\gamma_0)$, we have a link to the theory of the parabolic Muckenhoupt weights. Then, we are able to show various forward-in-time doubling results, which are later used to prove the parabolic Muckenhoupt distance weight characterization.

\begin{remark}
    It makes sense to define the parabolic weak porosity for general $\theta \in \R$, since some of our results apply also then. Observe that the case $\theta = 0$ would revert back to the elliptic theory, while $\theta < -1$ would be symmetric with the case $\theta >1$. The limiting case $\theta = 1$ is also interesting.
\end{remark}

\begin{remark}
    Since the dyadic lattice of any $R(\gamma_0) \subseteq \Rn$ with $0\leq \gamma_0\leq 1/2$ consists of rectangles $P(\gamma) \in \mathcal{D}\big(R(\gamma_0)\big)$ with $0\leq \gamma \leq 1/2$, it 
    motivates to define the parabolic weak porosity to every parabolic rectangle of such type, instead of for fixed $\gamma_0$. This simplifies the proofs by a large degree.
\end{remark}

\begin{remark}
    One can show that our definition of $(c,\delta,\theta)$-weak porosity is equivalent with $\gamma$-FIT weak porosity in \cite{KongYangYuan26}, whenever $\theta>1$ and $0<\gamma<1$.
\end{remark}

\subsection{Basic properties of parabolic weak porosity}
While the Definition \ref{def: pwp general} leaves some flexibility how to choose the pairwise disjoint $\delta$-admissible rectangles, in practice it is convenient to choose them from some standard collection. Namely, for any parabolic rectangle $R(\gamma_0) \subseteq \Rn$ with $0\leq \gamma_0 \leq 1/2$ it is quite natural to choose the $\delta$-admissible rectangles from the collection of the maximal $E$-free rectangles $\mathcal{F}\big(R(\gamma_0))$, see (\ref{eq: pre F}). We define the collection of the maximal $\delta$-admissible rectangles as 
\begin{align}
    \mathcal{F}_\delta^\theta\big(R(\gamma_0)\big) = \big\{ P(\gamma) \in \mathcal{F}\big(R(\gamma_0)\big) \;:\; |P(\gamma)| \geq \delta \mathcal{M}\big(R^\theta(\gamma_0)\big) \big\}. \label{eq: Fdelta}
\end{align}
Observe that $ \mathcal{F}_\delta^\theta\big(R(\gamma_0)\big) \subseteq \mathcal{F}\big(R(\gamma_0)\big)$ are collections of pairwise disjoint rectangles with possibly different truncation parameters $0\leq \gamma\leq1/2$. The rectangles have to be pairwise disjoint since by the nestedness of the dyadic lattice, if any different $P(\gamma),Q(\alpha) \in \mathcal{F}\big(R(\gamma_0)\big)$ with $\gamma,\alpha \in [0,1/2]$ intersected, then one of them would not be maximal.

We have the following result which allows us to use the $\mathcal{F}_\delta^\theta\big(R(\gamma_0)\big)$ collections to characterize parabolic weak porosity.
\begin{proposition}\label{prop: Fdelta}
    A nonempty closed set $E \subseteq \Rn$ is $(c,\delta,\theta)$-weakly porous for some $c,\delta\in(0,1)$ and $\theta \in \R$  if and only if every parabolic rectangle $R(\gamma_0) \subseteq \Rn$ with $0\leq \gamma_0 \leq 1/2$ satisfies
    \begin{align*}
        \sum_{P(\gamma)\in \mathcal{F}_\delta^\theta(R(\gamma_0))}|P(\gamma)| \geq c |R(\gamma_0)|.
    \end{align*}
\end{proposition}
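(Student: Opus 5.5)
The plan is to prove both directions directly from Definition \ref{def: pwp general}. Throughout we use the nestedness of $\mathcal{D}\big(R(\gamma_0)\big)$ and the fact that the rectangles of $\mathcal{F}\big(R(\gamma_0)\big)$ are pairwise disjoint.

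\emph{Sufficiency.} Suppose the sum over $\mathcal{F}_\delta^\theta\big(R(\gamma_0)\big)$ is at least $c|R(\gamma_0)|$ for every $R(\gamma_0)$. We take the elements of $\mathcal{F}_\delta^\theta\big(R(\gamma_0)\big)$ themselves as the rectangles $P_k(\gamma_k)$.
\begin{itemize}
\item They are $E$-free, pairwise disjoint and dyadic, since they belong to $\mathcal{F}\big(R(\gamma_0)\big)$.
\item They are $\delta$-admissible by the definition (\ref{eq: Fdelta}), where $\delta\mathcal{M}\big(R^\theta(\gamma_0)\big)$ is read as $\delta\big|\mathcal{M}\big(R^\theta(\gamma_0)\big)\big|$.
\item The collection is finite. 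If $\mathcal{M}\big(R^\theta(\gamma_0)\big)\neq\emptyset$, the lower volume bound together with disjointness inside $R(\gamma_0)$ bounds their number by $|R(\gamma_0)|/\big(\delta|\mathcal{M}(R^\theta(\gamma_0))|\big)$.
\end{itemize}
There is a degenerate case when $\mathcal{M}\big(R^\theta(\gamma_0)\big)=\emptyset$, i.e.\ $R^\theta(\gamma_0)$ has no $E$-free dyadic points. Then every $E$-free dyadic rectangle is admissible. The hypothesis forces the maximal $E$-free rectangles of total measure at least $c|R(\gamma_0)|$ to be present. We therefore pick finitely many of them with sum at least $c|R(\gamma_0)|/2$, or adjust by the convention $|\emptyset|=0$. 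I expect to handle this by noting that countable sums can be approximated by finite subsums, at the cost of replacing $c$ by $c/2$ if needed. If the intended statement is with the same $c$, an exact finiteness argument is needed instead.

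\emph{Necessity.} Suppose $E$ is $(c,\delta,\theta)$-weakly porous and fix $R(\gamma_0)$ with its rectangles $P_k(\gamma_k)$. Each $P_k(\gamma_k)$ is an $E$-free dyadic rectangle in $\mathcal{D}\big(R(\gamma_0)\big)$. By the finite chain of ancestors, it is contained in a unique maximal $E$-free dyadic rectangle $P_{(x,t)}(\alpha)\in\mathcal{F}\big(R(\gamma_0)\big)$ for any $(x,t)\in P_k(\gamma_k)$. This maximal rectangle satisfies
\[
|P_{(x,t)}(\alpha)|\geq |P_k(\gamma_k)|\geq \delta\big|\mathcal{M}\big(R^\theta(\gamma_0)\big)\big|,
\]
so it lies in $\mathcal{F}_\delta^\theta\big(R(\gamma_0)\big)$. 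Hence $\bigcup_k P_k(\gamma_k)$ is contained in the union of $\mathcal{F}_\delta^\theta\big(R(\gamma_0)\big)$. Since the latter rectangles are pairwise disjoint,
\[
\sum_{P(\gamma)\in\mathcal{F}_\delta^\theta(R(\gamma_0))}|P(\gamma)| \geq \Big|\bigcup_k P_k(\gamma_k)\Big| = \sum_k |P_k(\gamma_k)| \geq c|R(\gamma_0)|.
\]

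The main obstacle is bookkeeping rather than substance: verifying that every $E$-free dyadic rectangle sits inside a unique maximal one, which uses nestedness and the finite ancestor chain, and handling the possibly empty $\mathcal{M}$ or infinite collections in the sufficiency direction. Necessity is the essential step, and it is a monotonicity argument.
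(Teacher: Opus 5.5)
Your argument is essentially the paper's. For weak porosity $\Rightarrow$ sum condition, you place each admissible $P_k(\gamma_k)$ inside its maximal $E$-free dyadic ancestor, which then lies in $\mathcal{F}_\delta^\theta\big(R(\gamma_0)\big)$, and you use disjointness. For the converse you take $\mathcal{F}_\delta^\theta\big(R(\gamma_0)\big)$ itself. Your counting bound $N\le |R(\gamma_0)|/\big(\delta|\mathcal{M}(R^\theta(\gamma_0))|\big)$ actually justifies the finiteness that the paper only asserts.

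The one loose end is the degenerate case $\mathcal{M}\big(R^\theta(\gamma_0)\big)=\emptyset$. Drop your fallback of passing to $c/2$: it would prove a weaker statement than the one claimed. The case is in fact vacuous under the hypothesis. Note that $R^\theta(\gamma_0)$ is itself a parabolic rectangle with the same $\gamma_0\in[0,1/2]$, so the sum condition applies to it. If $\mathcal{M}\big(R^\theta(\gamma_0)\big)=\emptyset$, then $\mathcal{F}\big(R^\theta(\gamma_0)\big)=\emptyset$. Hence $\mathcal{F}_\delta^\theta\big(R^\theta(\gamma_0)\big)=\emptyset$, and the left-hand side for $R^\theta(\gamma_0)$ equals $0<c|R^\theta(\gamma_0)|$, a contradiction. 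So $\mathcal{M}\big(R^\theta(\gamma_0)\big)$ is a genuine rectangle of positive measure. Your finiteness bound then applies with the same constant $c$, and the sufficiency direction is complete.
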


\begin{proof}
    Let $R(\gamma_0) \subseteq \Rn$ be a parabolic rectangle. For the first direction, consider the $\delta$-admissible rectangles $P_k(\gamma_k) \in \mathcal{D}\big(R(\gamma_0)\big)$ with $0\leq \gamma_k \leq 1/2$ for $k =1, \dots, N$. It follows that for every $k = 1,\dots, N$ there exists $P(\gamma) \in \mathcal{F}_\delta^\theta = \mathcal{F}_\delta^\theta\big(R(\gamma_0)\big)$ with $P_k(\gamma_k) \subseteq P(\gamma)$ and $0\leq \gamma \leq 1/2$. A short calculation verifies
    \begin{align*}
        \sum_{P(\gamma)\in \mathcal{F}_\delta^\theta} |P(\alpha)| &\geq \sum_{P(\alpha)\in \mathcal{F}_\delta^\theta} \sum_{k=1}^N \mathbbm{1}_{P_k(\gamma_k) \subseteq P(\alpha)}|P_k(\gamma_k)| \nonumber\\
        &= \sum_{k=1}^N \sum_{P(\alpha)\in \mathcal{F}_\delta^\theta}\mathbbm{1}_{P_k(\gamma_k) \subseteq P(\alpha)}|P_k(\gamma_k)|\\
        &\geq  \sum_{k =1}^N |P_k(\gamma_k)| \geq c |R(\gamma_0)|.
    \end{align*}

    The other direction follows clearly as $ \mathcal{F}_\delta^\theta$ is a finite pairwise disjoint collection of $\delta$-admissible rectangles. The explanation for disjointedness is right after (\ref{eq: Fdelta}).
\end{proof}

Similar to the elliptic theory, the Lebesgue measure of a parabolic weakly porous set is zero.

\begin{proposition} \label{prop: 0 measure}
    Suppose $E \subseteq \Rn$ be $(c,\delta, \theta)$-weakly porous for some $c,\delta \in (0,1)$ and $\theta \in \R$. Then, $|E| = 0$.
\end{proposition}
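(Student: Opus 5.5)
We will show that $E$ has no Lebesgue density points. By the Lebesgue density theorem, applied along a differentiation basis of parabolic rectangles, this forces $|E|=0$. The parabolic rectangles $R(x,t,L)$ for fixed $p$ form a regular family, being comparable to parabolic balls in $\dist_p$, and the Lebesgue measure is doubling for this metric. Hence the density theorem gives, for almost every $(x,t)\in E$,
\begin{align*}
    \lim_{L\to 0}\frac{|E\cap R(x,t+L^p/2,L)|}{|R(x,t+L^p/2,L)|} = 1.
\end{align*}
Here we use rectangles containing $(x,t)$ in their interior region, with $(x,t)$ at the centre; a rectangle containing the point with comparable size would serve equally well.

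The key step is to show that every parabolic rectangle $R(\gamma_0)$ with $0\le\gamma_0\le1/2$ satisfies $|R(\gamma_0)\setminus E|\ge c|R(\gamma_0)|$. This follows directly from Proposition \ref{prop: Fdelta}. The rectangles in $\mathcal{F}_\delta^\theta\big(R(\gamma_0)\big)$ are pairwise disjoint, $E$-free and contained in $R(\gamma_0)$, and their measures sum to at least $c|R(\gamma_0)|$. Therefore
\begin{align*}
    |R(\gamma_0)\cap E| \le (1-c)|R(\gamma_0)|.
\end{align*}
The subtlety is that Definition \ref{def: pwp general} may fail to supply a rectangle: if $\mathcal{M}\big(R^\theta(\gamma_0)\big)=\emptyset$, then $\mathcal F$ of the translate is empty. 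In that case the translated rectangle is entirely contained in $E$ up to measure zero, but the porosity condition for $R(\gamma_0)$ itself still requires $E$-free rectangles of total measure at least $c|R(\gamma_0)|$. Any such rectangles are $E$-free regardless of the threshold. So the bound $|R(\gamma_0)\setminus E|\ge c|R(\gamma_0)|$ holds in all cases, and we only need to read off that the $P_k(\gamma_k)$ are $E$-free and disjoint.

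Applying this with $\gamma_0=0$ to the centred rectangles gives a density ratio of at most $1-c<1$ at every scale. This contradicts density $1$ at almost every point of $E$, so $|E|=0$. The only step needing care is the differentiation theorem for the parabolic rectangle basis. It follows from the Vitali covering lemma in the doubling metric measure space $(\Rn,\dist_p,|\cdot|)$. Alternatively, one can note that the centred rectangles are nested comparably to $\dist_p$-balls $B_p((x,t),L)$: the rectangle is contained in the ball, and its measure is comparable to the ball's with constant depending only on $n$ and $p$. The density $1$ statement for balls, $|B_p\setminus E|=o(|B_p|)$, then transfers to the rectangles.
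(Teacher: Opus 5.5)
Your proof is correct and is essentially the paper's argument: both combine the Lebesgue differentiation theorem in the parabolic geometry with the fact that weak porosity places pairwise disjoint $E$-free rectangles of total measure at least $c|R(\gamma_0)|$ inside every rectangle. The only difference is in the shrinking sets. The paper differentiates $\mathbbm{1}_E$ along the $E$-free unions $F_i \subseteq R(x,t,2^{-i},\gamma_0)$, which converge regularly to $(x,t)$. You instead bound the density of $E$ in centred rectangles by $1-c$ and transfer the density theorem from $\dist_p$-balls; the two versions are interchangeable.
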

\begin{proof}
    Let $(x,t) \in \Rn$. Define a parabolic rectangle $R_i(\gamma_0) = R(x,t,2^{-i},\gamma_0)$ with $0\leq \gamma_0 \leq 1/2$ for every $i\in \N$. By the parabolic weak porosity there exist pairwise disjoint $\delta$-admissible $P_j(\gamma_j) \in \mathcal{D}\big(R(\gamma_0)\big)$ with $0\leq \gamma_j \leq 1/2$ for $j =1,\dots, N_i$ such that 
    \begin{align*}
        \sum_{j=1}^{N_i} |P_j(\gamma_j)| \geq c |R_i(\gamma_0)|,
    \end{align*}
    We shall denote the union of the $\delta$-admissible rectangles by
    \begin{align*}
        F_i = \bigcup_{j=1}^{N_i} P_j(\gamma_j) \subseteq R_i(\gamma_0).
    \end{align*}
    Now, by letting $i \rightarrow \infty$, the sets $F_i$ converges regularly to $(x,t)$ in the parabolic geometry, see \cite[Definition 2.2, Lemma 2.3]{KinnunenMyyryYang2024}, a version of the Lebesgue differentiation theorem. The lemma implies that for $\mathbbm{1}_E \in L_{\textnormal{loc}}^1(\Rn)$ we have
    \begin{align*}
    \lim_{i\rightarrow \infty} \dashint_{F_i} \big|\mathbbm{1}_E (y,s) - \mathbbm{1}_E (x,t)\big| \dy \ds = 0
    \end{align*}
    for almost every $(x,t) \in \Rn$, and consequently
    \begin{align*}
    \mathbbm{1}_E (x,t) = \lim_{i\rightarrow 0} \dashint_{F_i} \mathbbm{1}_E (y,s) \dy \ds = \lim_{i\rightarrow \infty} \dashint_{F_i} 0 \dy \ds = 0.
    \end{align*}
    If $(x,t) \in E$, then clearly $\mathbbm{1}_E(x,t) = 1 \neq 0$, so necessarily $|E| = 0$.
\end{proof}

\subsection{Parabolic Muckenhoupt distance weights} \label{subsec: parabolic A1}

The natural counterpart of the theory of the parabolic weak porosity are the parabolic Muckenhoupt classes introduced in \cite{KinnunenMyyry2024}. We briefly include the notation of these works to draw the connection to our work. The established way of writing is
\begin{align*}
    R^-(\gamma) = Q(x,L) \times (t - L^p, t - \gamma L^p) \quad \text{and} \quad R^+(\gamma) = Q(x,L) \times (t + \gamma L^p, t + L^p).
\end{align*}
This notation combines the truncation and translation, however, in our work we prefer to separate these features. Observe that we can rewrite these sets as
\begin{align}
    R^-(\gamma) = R^0(\gamma) = R(\gamma) \quad \text{and} \quad  R^+(\gamma) = R^{\theta}(\gamma), \label{eq: R plusminus}
\end{align}
when $\theta = (1+\gamma)(1-\gamma)^{-1}$ and $0\leq \gamma <1$. 

The parabolic Muckenhoupt classes introduce time dependency by comparing integrals over $R^-(\gamma)$ and $R^+(\gamma)$. Crucially, there is some time-lag between the upper and lower part. Using $R^\pm(\gamma)$ notation, this corresponds to the case $\gamma>0$. A weight is a nonnegative locally integrable function, and the definition of the parabolic $A_1$ class is as follows.
\begin{definition}\label{def: parabolic A1}
Let $0< \gamma<1$. A weight $w \in L^1_{\text{loc}}(\Rn)$ belongs to the parabolic Muckenhoupt class $A^+_1(\gamma)$ if
there exists a constant $C>0$ such that
\[
\dashint_{R^-(\gamma)} w \dx \dt \leq C \essinf_{(x,t)\in R^+(\gamma)} w(x,t)
\]
for every pair of $R^-(\gamma),R^+(\gamma) \subseteq \mathbb R^{n+1}$.
\end{definition}

One of the known results of the parabolic Muckenhoupt classes is the invariance of $0<\gamma<1$ in the definition of $A_1^+(\gamma)$. Moreover, we can also add any translation between $R^-(\gamma)$ and $R^+(\gamma)$ as long as the time-lag is positive. Therefore, we may reformulate the time-lag invariance to the following lemma, which will be our bridge to $A_1^+(\gamma)$, see the proof of \cite[Theorem 3.1]{KinnunenMyyry2024}.
\begin{lemma}
    Let $\theta >1$ and $0< \gamma<1$. Then, a weight $w \in A^+_1(\gamma)$ if and only if there exists a constant $C >0$ such that
    \begin{align*}
        \dashint_{R(\gamma_0)} w \dx \dt \leq C \essinf_{(x,t)\in R^\theta(\gamma_0)} w(x,t).
    \end{align*}
    for every parabolic rectangle $R(\gamma_0) \subseteq \Rn$ with $0\leq \gamma_0 \leq 1/2$. \label{lem: A1 starting point}
\end{lemma}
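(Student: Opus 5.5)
Both directions will be reduced to the standard $A_1^+(\gamma)$ condition by chaining arguments, writing $\theta_\gamma = (1+\gamma)(1-\gamma)^{-1}$ so that $R^+(\gamma) = R^{\theta_\gamma}(\gamma)$ by (\ref{eq: R plusminus}). Note first that for a rectangle $R(\gamma_0)$ with $0\le\gamma_0\le 1/2$, rescaling and translating shows it is comparable to a truncated rectangle $R^-(\gamma)$ up to a change of truncation parameter. Also, $R^{\theta}(\gamma_0)$ sits at time-lag $(\theta-1)l_t(R(\gamma_0))>0$ above $R(\gamma_0)$.

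\emph{Sufficiency ($\Leftarrow$).} Assume $w\in A_1^+(\gamma)$ with constant $C$, and fix $R(\gamma_0)$ and $\theta>1$. I will cover $R(\gamma_0)$ by finitely many rectangles $S_i^-$ and cover $R^\theta(\gamma_0)$ by finitely many rectangles $T_j^+$. The number $N$ of these depends only on $n,p,\gamma,\theta$. The covering should be arranged so that every pair $(S_i^-,T_j^+)$ can be linked by a chain of at most $M=M(n,p,\gamma,\theta)$ pairs $(U_k^-,U_k^+)$. Each pair in the chain should be an admissible $A_1^+(\gamma)$ pair, with $U_{k+1}^-\subseteq U_k^+$ up to comparable measure. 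Applying the $A_1^+(\gamma)$ inequality along the chain needs the step
\[
\essinf_{U_k^+} w\le \dashint_{U_{k+1}^-} w\le C\essinf_{U_{k+1}^+}w,
\]
which holds whenever $U_{k+1}^-\subseteq U_k^+$. Chaining gives $\dashint_{S_i^-}w\le C^M\essinf_{T_j^+}w$, and averaging over $i$ then yields the claim. The geometric point is that the positive lag allows the rectangles to be shrunk by a fixed factor. Small rectangles with lag $\gamma L^p$ can then be chained forward in time and sideways in space. Spatial moves need the rectangles to be small relative to the time gap, which is where $p>1$ enters.

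\emph{Necessity ($\Rightarrow$).} Conversely, assume the inequality over $R(\gamma_0)$ and $R^\theta(\gamma_0)$ holds for all rectangles. Given a pair $R^-(\gamma),R^+(\gamma)$, I will split $R^-(\gamma)$ into a bounded number of small rectangles $R_i(\gamma_0)$ of side $\varepsilon L$. I will then cover $R^+(\gamma)$ by small rectangles $R_{i,j}^\theta(\gamma_0)$, again using the lag and chaining with intermediate inequalities as above.

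\emph{Main obstacle.} I expect the main obstacle to be the quantitative chain geometry. The chosen small rectangles must fit inside the forward pieces with a bounded number of steps, uniformly in $\gamma_0\in[0,1/2]$. The plan is to reproduce the argument in the proof of \cite[Theorem 3.1]{KinnunenMyyry2024}.
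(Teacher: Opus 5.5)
Your plan matches the paper, which gives no independent argument for this lemma and simply defers to the chaining proof of \cite[Theorem 3.1]{KinnunenMyyry2024}; that is exactly the argument you propose to reproduce. One caution for when you write the chain out: the step cannot rest on exact containment. Containment $U_{k+1}^-\subseteq U_k^+$ forces the side lengths to be non-increasing and limits the total spatial drift of the chain to half its initial side length. The link you actually need, for equal-size rectangles whose overlap is comparable to their measure, is $\essinf_{U_k^+} w\le \frac{|U_{k+1}^-|}{|U_k^+\cap U_{k+1}^-|}\dashint_{U_{k+1}^-} w$, which holds because $w\ge 0$; this is presumably what your ``up to comparable measure'' means.
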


\subsection{Parabolic weak porosity via Muckenhoupt distance weights}
We shall prove the first direction of our main theorem, that is, if a distance function $\dist_p(\cdot, E)^{-\alpha(n+p)} \in A_1^+(\gamma)$ for some set $E \subseteq \Rn$, then the set $E$ is parabolic weak porous. While this direction was proven in \cite{KongYangYuan26} for $\gamma$-FIT parabolic weak porosity, we have still included our proof. As a matter of fact, we show a slightly stronger statement, which will motivate our planned approach for the reverse direction.

\begin{theorem} \label{theo: A1 => wp}
Suppose $E\subseteq \mathbb{R}^{n+1}$ is a nonempty closed set, $\theta>1$, $\alpha>0$ and $0< \gamma<1$. If $\dist_p(\cdot, E)^{-\alpha(n+p)} \in A_1^+(\gamma) $, then $E$ is $(c_0,\delta_0,\theta)$-weakly porous for some $c_0,\delta_0 \in (0,1)$. Moreover, there exists a constant $C = C(n,p,d,\gamma,\theta,\alpha,E) >0$ such that for any $0<\delta\leq \delta_0$ the set $E$ is $(c,\delta,\theta)$-weakly porous for some $0<c<1$ satisfying
    \[1-c \leq C \delta^\alpha.\]
\end{theorem}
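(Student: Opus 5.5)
Set $w=\dist_p(\cdot,E)^{-\alpha(n+p)}$ and fix a parabolic rectangle $R(\gamma_0)$ with $0\le\gamma_0\le 1/2$. By Lemma \ref{lem: A1 starting point} there is $C_w>0$ with
\begin{align*}
\dashint_{R(\gamma_0)} w \dx\dt \le C_w \essinf_{R^\theta(\gamma_0)} w = C_w \Big(\esssup_{R^\theta(\gamma_0)} \dist_p(\cdot,E)\Big)^{-\alpha(n+p)} .
\end{align*}
If $R^\theta(\gamma_0)\cap E\neq\emptyset$, Proposition \ref{prop: maximal hole}\ref{item: maximal hole 2} gives $\esssup_{R^\theta(\gamma_0)}\dist_p(\cdot,E)\le 2^d l_x(\mathcal{M}(R^\theta(\gamma_0)))$. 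Hence $\dashint_{R(\gamma_0)} w\le C_w 2^{d\alpha(n+p)}\, l_x(\mathcal M)^{-\alpha(n+p)}$, where $\mathcal M=\mathcal M(R^\theta(\gamma_0))$. If $R^\theta(\gamma_0)$ is $E$-free, the maximal hole is the rectangle itself, and the left inequality of Proposition \ref{prop: maximal hole} together with the trivial bound on the diameter gives the same estimate up to constants.

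The plan is to show that the bad set is small. Let $B\subseteq R(\gamma_0)$ be the union of the maximal $E$-free rectangles $P(\gamma)\in\mathcal F(R(\gamma_0))$ that are not $\delta$-admissible, that is, $|P(\gamma)|<\delta|\mathcal M|$. Since $|E|=0$ (the argument does not need porosity here, because $w\in L^1_{\textnormal{loc}}$ and $w=\infty$ on $E$), we have $R(\gamma_0)\setminus E = B\,\cup\bigcup_{\mathcal F_\delta^\theta}P(\gamma)$ up to a null set. On each non-admissible $P(\gamma)$, Proposition \ref{prop: maximal hole}\ref{item: maximal hole 1} gives $\dist_p(\cdot,E)\le 2^d l_x(P(\gamma))$. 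Moreover, $|P(\gamma)|\simeq l_x(P(\gamma))^{n+p}$ with constants depending only on the truncation range $[0,1/2]$, so $l_x(P(\gamma))^{n+p}\lesssim \delta\, l_x(\mathcal M)^{n+p}$. Therefore on $B$,
\begin{align*}
w\ge 2^{-d\alpha(n+p)}\, l_x(P(\gamma))^{-\alpha(n+p)} \gtrsim \delta^{-\alpha}\, l_x(\mathcal M)^{-\alpha(n+p)} .
\end{align*}

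Combining the two estimates, a Chebyshev-type bound gives
\begin{align*}
\frac{|B|}{|R(\gamma_0)|}\,\delta^{-\alpha}\, l_x(\mathcal M)^{-\alpha(n+p)} \lesssim \dashint_{R(\gamma_0)} w \lesssim l_x(\mathcal M)^{-\alpha(n+p)} .
\end{align*}
Hence $|B|\le C\delta^\alpha|R(\gamma_0)|$, and so
\begin{align*}
\sum_{\mathcal F_\delta^\theta}|P(\gamma)| \ge (1-C\delta^\alpha)|R(\gamma_0)| .
\end{align*}
Choose $\delta_0$ with $C\delta_0^\alpha\le 1/2$. Proposition \ref{prop: Fdelta} then yields $(c,\delta,\theta)$-weak porosity with $1-c\le C\delta^\alpha$ for all $\delta\le\delta_0$, and in particular with $c_0=1/2$ at $\delta_0$.

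The main obstacle is keeping the constants uniform. First, the comparison $|P|\simeq l_x(P)^{n+p}$ must hold uniformly across truncation parameters in $[0,1/2]$; this is immediate since $1-\gamma\in[1/2,1]$. Second, the case where $R^\theta(\gamma_0)$ is $E$-free needs care, because there $\mathcal M(R^\theta(\gamma_0))=R^\theta(\gamma_0)$ and only the lower bound of Proposition \ref{prop: maximal hole} is available. In that case I would bound $\esssup_{R^\theta}\dist_p$ from below by $l_x(R)/2$, which is the direction the argument actually needs. Note that $\mathcal M$ must be measured in $R^\theta(\gamma_0)$ rather than $R(\gamma_0)$; this is exactly what the $A_1^+$ inequality supplies.
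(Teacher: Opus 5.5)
Your proof is the paper's argument. You use the same Chebyshev estimate on the union $B$ of the non-admissible maximal $E$-free rectangles (the paper's set $G$), combined with Lemma \ref{lem: A1 starting point} and the hole-size comparisons of Proposition \ref{prop: maximal hole}.

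One step is justified backwards. In the case $R^\theta(\gamma_0)\cap E\neq\emptyset$ you cite the upper bound $\esssup_{R^\theta(\gamma_0)}\dist_p(\cdot,E)\le 2^d l_x(\mathcal M)$. That bound only controls $\essinf_{R^\theta(\gamma_0)} w$ from below, so your ``hence'' does not follow from it. The inequality you write happens to be true, but for a different reason. What you need is the left inequality $\tfrac12 l_x(\mathcal M)\le \esssup_{R^\theta(\gamma_0)}\dist_p(\cdot,E)$ of Proposition \ref{prop: maximal hole}\ref{item: maximal hole 2}, which the proposition guarantees whether or not $R^\theta(\gamma_0)$ meets $E$. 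It gives $\dashint_{R(\gamma_0)} w\le 2^{\alpha(n+p)}C_w\, l_x(\mathcal M)^{-\alpha(n+p)}$ directly, with no case split, exactly as in the paper.

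A small omission: the upper bound of Proposition \ref{prop: maximal hole}\ref{item: maximal hole 1} that you use on $B$ requires $R(\gamma_0)\cap E\neq\emptyset$. When $R(\gamma_0)$ is $E$-free, $\mathcal F(R(\gamma_0))=\{R(\gamma_0)\}$ and $R(\gamma_0)$ is $\delta$-admissible, since $|R(\gamma_0)|=|R^\theta(\gamma_0)|\ge|\mathcal M|$. So $B=\emptyset$ and there is nothing to prove; the paper dispatches this case first.
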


\begin{proof}
    Suppose $\dist_p(\cdot, E)^{-\alpha(n+p)} \in A_1^+(\gamma)$ for some closed nonempty set $E \subseteq \Rn$ with $\alpha >0$ and $0< \gamma <1$. Note that $\lvert E \rvert =0$ since $w$ is locally integrable and $w=\infty$ in $E$. Fix a parabolic rectangle $R(\gamma_0)\subseteq\mathbb{R}^{n+1}$ with $0 \leq \gamma_0 \leq 1/2$. Note that if $R(\gamma_0) \cap E = \emptyset$, then
    \begin{align*}
        |R(\gamma_0)| \geq \delta |R^\theta(\gamma_0)| \geq \delta |\mathcal{M}\big(R^\theta(\gamma_0)\big)|
    \end{align*}
    and $|R(\gamma_0)| \geq c |R(\gamma_0)|$ for every pair of $c,\delta \in(0,1)$. Therefore, we may assume $R(\gamma_0) \cap E \neq \emptyset$. 
    
    Consider the collection $\mathcal{F}_\delta^\theta = \mathcal{F}_\delta^\theta\big(R(\gamma_0)\big) \subseteq \mathcal{F}\big(R(\gamma_0)\big)$, where $\theta >1$ and $0<\delta<1$ is yet to be determined. Let us define
    \begin{align}
        G = \big(R(\gamma_0) \setminus E\big)\setminus \bigcup_{P(\gamma_1) \in \mathcal{F}_\delta^\theta}P(\gamma_1). \label{eq: A1 => pwp G}
    \end{align}
    Since $E$ is closed, for any $(x,t) \in G \subseteq R(\gamma_0) \setminus E$ there exists the maximal $E$-free dyadic subrectangle $Q_{(x,t)}(\gamma_2) \in \mathcal{F}\big(R(\gamma_0)\big)$ with $0\leq \gamma_2 \leq 1/2$ such that $(x,t) \in Q_{(x,t)}(\gamma_2)$. Denote $Q(\gamma_2) = Q_{(x,t)}(\gamma_2)$ for simplicity. However, since $(x,t) \notin \bigcup_{P(\gamma_1) \in \mathcal{F}_\delta^\theta} P(\gamma_1)$, the measure of $Q(\gamma_2)$ has the upper bound
    \[
    \lvert Q(\gamma_1) \rvert < \delta \big\lvert \mathcal{M}\big(R^\theta(\gamma_0) \big) \big\rvert.
    \]
    The above is equivalently expressed using the side length as
    \begin{align*}
        (1-\gamma_2)\big(l_x\big(Q(\gamma_2)\big)\big)^{n+p} <  \delta (1-\beta_1) l_x\big(\mathcal{M}\big(R^\theta(\gamma_0) \big)\big)^{n+p},
    \end{align*}
    where  $0\leq \beta_1 \leq 1/2$ is the truncation parameter of $\mathcal{M}\big(R^\theta(\gamma_0) \big)$. By solving $l_x\big(Q(\gamma_2)\big)$, we get
    \begin{align*}
        l_x\big(Q(\gamma_2)\big) &<  \Big(\delta \frac{1-\beta_1}{1-\gamma_2}\Big)^\frac{1}{n+p} l_x\big(\mathcal{M}\big(R^\theta(\gamma_0) \big)\big)\\
        &\leq (2\delta)^\frac{1}{n+p} l_x\big(\mathcal{M}\big(R^\theta(\gamma_0)\big)\big).
    \end{align*}
    By Proposition \ref{prop: maximal hole}\ref{item: maximal hole 1} the distance between $(x,t)$ and $E$ is now bounded by
    \begin{align*}
        \dist_p\big((x,t),E\big)  &\leq \esssup_{(y,s)\in Q(\gamma_2)}\dist_p\big((y,s),E \big) \leq 2^d l_x\big(Q(\gamma_2)\big) \\
        &\leq  C_1 \delta^{\frac{1}{n+p}} l_x\big(\mathcal{M}\big(R^\theta(\gamma_0)\big)\big),
    \end{align*}
    where $C_1 = 2^{d+\frac{1}{n+p}}$. Therefore, by raising both sides to the power of $-\alpha(n+p)$, we obtain
    \[
    l_x\big(\mathcal{M}\big(R^\theta(\gamma_0)\big)\big)^{-\alpha (n+p)} \leq C_1^{\alpha(n+p)} \delta^\alpha \dist_p\big((x,t),E\big)^{-\alpha(n+p)}
    \]
    for every $(x,t)\in G$.
    
    Integrating both sides of the previous inequality over the set $G$, we get
    \begin{align}
        \begin{split}
            l_x\big(\mathcal{M}\big(R^\theta(\gamma_0)\big)\big)^{-\alpha(n+p)} \lvert G \rvert
            &\leq C_1^{\alpha(n+p)} \delta^\alpha \int_G \dist_p\big((x,t),E\big)^{-\alpha(n+p)}\dx\dt \\
            &\leq C_1^{\alpha(n+p)} \delta^\alpha \int_{R(\gamma_0)} \dist_p\big((x,t),E\big)^{-\alpha(n+p)}\dx\dt.
    \end{split}\label{eq: A1 => pwp main inequality}
    \end{align}
    Since $\dist_p\big(\cdot,E\big)^{-\alpha(n+p)} \in A_1^+(\gamma)$, we apply Lemma \ref{lem: A1 starting point}, and then Proposition \ref{prop: maximal hole}\ref{item: maximal hole 2} to obtain
    \begin{align*}
        \int_{R(\gamma_0)} \dist_p\big((x,t),E\big)^{-\alpha(n+p)}\dx\dt &\leq C_2 |R^\theta(\gamma_0)| \essinf_{(x,t)\in R^\theta(\gamma_0)} \dist_p\big((x,t),E\big)^{-\alpha(n+p)}\\
        & = C_2 |R^\theta(\gamma_0)| \bigg(\esssup_{(x,t)\in R^\theta(\gamma_0)} \dist_p\big((x,t),E\big) \bigg)^{-\alpha(n+p)} \\
        &\leq 2^{\alpha(n+p)} C_2 |R^\theta(\gamma_0)| l_x\big(\mathcal{M}\big(R^\theta(\gamma_0)\big)\big)^{-\alpha(n+p)},
    \end{align*}
    where $C_2>0$ is a constant from Lemma \ref{lem: A1 starting point}. Substituting the above to (\ref{eq: A1 => pwp main inequality}), we obtain
    \begin{align*}
        |G| \leq 2^{\alpha(n+p)}C_2 C_1^{\alpha(n+p)} \delta^\alpha |R(\gamma_0)| \leq C \delta^\alpha |R(\gamma_0)|,
    \end{align*}
    where $ C = \max \big\{ 2^{\alpha(n+p)}C_2 C_1^{\alpha(n+p)},1 \big\}$.
    
    To finish the proof, recall the definition of the set $G$, see (\ref{eq: A1 => pwp G}). By substituting the above, we have
    \begin{align*}
    |R(\gamma_0)| - \sum_{P(\gamma_1)\in \mathcal{F}_\delta^\theta} |P(\gamma_1)| &= \Big|R(\gamma_0)\setminus \bigcup_{P(\gamma_1)\in \mathcal{F}_\delta^\theta} P(\gamma)\Big|\\
    &=\big\lvert G \cup E \big\rvert = \lvert G \rvert \leq C  \delta^\alpha \lvert R(\gamma_0) \rvert.
    \end{align*}
    Rearranging the terms yields
    \begin{align}
    \sum_{P(\gamma_1)\in \mathcal{F}_\delta^\theta} |P(\gamma_1)| \geq (1-C\delta^\alpha ) |R(\gamma_0)|. \label{eq: A1 => pwp end}
    \end{align}
    We choose $0<\delta_0 <1$ small enough, for example $\delta_0= 2^{-1}C^{-\frac{1}{\alpha}} \in (0,1)$, and set
    \begin{align*}
        c_0 = 1 - C \delta_0^\alpha =  1- 2^{-\alpha} \in (0,1),
    \end{align*}
    verifying  $0< c_0 <1$. Furthermore, for any $0<\delta \leq \delta_0$, we set 
    \begin{align*}
        c = 1 - C \delta^\alpha \in [c_0,1).
    \end{align*}
    Substituting the above to (\ref{eq: A1 => pwp end}), we get
    \[
    \sum_{P(\gamma_1)\in \mathcal{F}_\delta^\theta} |P(\gamma_1)| \geq (1- C \delta^\alpha ) |R(\gamma_0)| = c |R(\gamma_0)|,
    \]
    which by Proposition \ref{prop: Fdelta} shows that $E$ is $(c,\delta,\theta)$-weakly porous.
\end{proof}

\section{The $\alpha$-improvement of parabolic weakly porous set} \label{sec: alpha improvement}

In this section, we show that a certain relationship between the weak porosity constants $c$ and $\delta$ is a necessary and sufficient condition for a set $E\subseteq \Rn$ to induce parabolic Muckenhoupt distance weights. 

\subsection{Weak porosity and $\alpha$-improvement}
Theorem \ref{theo: A1 => wp} motivates a necessary condition for the link between the parabolic weak porosity and the $A_1^+(\gamma)$. Namely, the inequality (\ref{eq: A1 => pwp end}) implies that the weak porosity constant $0<c<1$ can be taken to arbitrarily close to one by letting $0<\delta<1$ approach zero, in a sense improving the weak porosity. The exact improvement relationship is closely related to the exponent $\alpha>0$ of $\dist_p\big(\cdot,E\big)^{-\alpha(n+p)} \in A_1^+(\gamma)$, and is stated in Theorem \ref{theo: A1 => wp}. This relationship between the parameters $c$ and $\delta$ has been studied in \cite{Vargas2026} for the standard $A_q$ weights for $q>1$. The exponent $\alpha$ plays similar significant role in their works as well.

Following the statement of Theorem \ref{theo: A1 => wp}, we formulate the following definition.
\begin{definition} \label{def: alpha improvement}
    Suppose $E\subseteq \Rn$ is $(c_0,\delta_0,\theta)$-weakly porous for $c_0,\delta_0 \in(0,1)$ and $\theta \in \R$. We say, $E$ is $\alpha$-improving if there exists constants $\alpha,K >0$ such that for any $0<\delta\leq \delta_0$ the set $E$ is $(c,\delta,\theta)$-weakly porous for some $0<c<1$ satisfying
    \[1-c \leq K \delta^\alpha.\]
\end{definition}

Instead of using the definition above, it is sometimes easier to work with a sequential definition of the $\alpha$-improvement. Now, the cumbersome requirement of finding for every $0<\delta\leq \delta_0$ some $0<c<1$ can be replaced with countable sequences of certain type.
\begin{proposition} \label{prop: alpha improvement eqv}
    Let $\alpha >0$ and suppose $E\subseteq \Rn$ is $(c_0,\delta_0,\theta)$-weakly porous for  $c_0,\delta_0 \in(0,1)$ and $\theta \in \R$. Then, $E$ is $\alpha$-improving if and only if there exists constants $C,\eta>0$ and sequences $(c_i)_{i\in \N}$ and $(\delta_i)_{i\in \N}$ with $c_i,\delta_i \in (0,1)$ satisfying 
    \begin{align*}
        1-c_i \leq C\delta_i^\alpha, \quad \frac{\delta_{i+1}}{\delta_i} \geq \eta, \quad \text{and} \quad \lim_{i\rightarrow \infty}\delta_i\rightarrow 0,
    \end{align*}
    and the set $E$ is $(c_i,\delta_i,\theta)$-weakly porous for every $i \in \N$.
\end{proposition}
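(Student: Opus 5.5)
The forward direction is immediate. If $E$ is $\alpha$-improving with constants $\alpha,K$, define $\delta_i = \delta_0 2^{-i}$ for $i\in\N$. By Definition \ref{def: alpha improvement} there is $c_i\in(0,1)$ such that $E$ is $(c_i,\delta_i,\theta)$-weakly porous and $1-c_i\le K\delta_i^\alpha$. Taking $C=K$ and $\eta=1/2$ gives $\delta_{i+1}/\delta_i = 1/2$ and $\delta_i\to 0$, so all the required properties hold.

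For the reverse direction, I will use monotonicity of weak porosity in $\delta$. If $E$ is $(c,\delta,\theta)$-weakly porous and $0<\delta'\le\delta$, then $E$ is also $(c,\delta',\theta)$-weakly porous. This holds because every $\delta$-admissible rectangle satisfies $|P(\gamma)|\ge\delta|\mathcal{M}(R^\theta(\gamma_0))|\ge\delta'|\mathcal{M}(R^\theta(\gamma_0))|$, so the same disjoint family from Definition \ref{def: pwp general} works. Equivalently, $\mathcal{F}_\delta^\theta\subseteq\mathcal{F}_{\delta'}^\theta$ in Proposition \ref{prop: Fdelta}.

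Now fix $0<\delta\le\delta_0$. There are two cases.

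\emph{Case 1: $\delta\le\delta_1$.} Since $\delta_i\to0$, the set $\{i:\delta_i\ge\delta\}$ is nonempty and finite. Let $i$ be its largest element, so $\delta_i\ge\delta>\delta_{i+1}\ge\eta\delta_i$. By monotonicity $E$ is $(c_i,\delta,\theta)$-weakly porous, and
\[
1-c_i\le C\delta_i^\alpha\le C\eta^{-\alpha}\delta^\alpha .
\]

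\emph{Case 2: $\delta_1<\delta\le\delta_0$.} Here $\delta$ is bounded below. Using the original $(c_0,\delta_0,\theta)$-porosity and monotonicity, $E$ is $(c_0,\delta,\theta)$-weakly porous, and
\[
1-c_0\le 1\le \delta_1^{-\alpha}\delta^\alpha .
\]

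In Case 2 one could instead use the index $i=1$ whenever $\delta\le\delta_1$, but the bound above needs nothing extra.

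Setting $K=\max\{C\eta^{-\alpha},\delta_1^{-\alpha}\}$ then covers every $0<\delta\le\delta_0$, which is the definition of $\alpha$-improving. The same $K$ also handles the alternative indexing starting at $i=0$. If the sequence is not monotone, choosing $i$ as the largest index with $\delta_i\ge\delta$ still gives $\delta_{i+1}<\delta$, which is all the argument uses.

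The only points needing care are the monotonicity in $\delta$, which follows directly from the definition, and choosing this maximal index $i$ so that the ratio condition $\delta_{i+1}/\delta_i\ge\eta$ controls $\delta_i$ by $\delta/\eta$. I do not expect a genuine obstacle.
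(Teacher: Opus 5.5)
Your proof is correct and takes essentially the paper's approach. The forward direction uses $\delta_i = 2^{-i}\delta_0$ and $\eta = 1/2$. The converse uses monotonicity of weak porosity in $\delta$ and picks the index with $\delta_{i+1}<\delta\le\delta_i$, so that $1-c_i\le C\eta^{-\alpha}\delta^\alpha$. Your extra Case 2, which covers $\delta_1<\delta\le\delta_0$ by the base $(c_0,\delta_0,\theta)$-porosity, only makes explicit what the paper handles by identifying the sequence's first term with the given $\delta_0$ (and hence taking $K=C\eta^{-\alpha}$).
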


\begin{proof}
    The first direction, assuming $E$ is $\alpha$-improving, follows trivially by letting $\eta = 2^{-1}$ and $\delta_i = 2^{-i}\delta_0$ for every $i \in \N$.
    
    For the converse direction, fix $0<\delta_0<1$ from the sequence $(\delta_i)_{i\in\N}$. Let $0<\delta \leq \delta_0$ and choose any parabolic rectangle $R(\gamma_0) \subseteq \Rn$ with $0\leq \gamma_0 \leq 1/2$. Since $\delta_i \rightarrow 0$ as $i \rightarrow \infty$, there exists some $j \in \N$ such that $\delta_{j+1} \leq \delta\leq \delta_j$. Consequently, any $\delta_j$-admissible rectangle $P(\gamma) \in \mathcal{D}\big(R(\gamma_0)\big)$ with $0\leq \gamma \leq 1/2$ is also $\delta$-admissible. This is true because $P(\gamma)$ is $E$-free and
    \begin{align*}
         |P(\gamma)| \geq \delta_j|\mathcal{M}\big(R^\theta(\gamma_0)\big)| \geq \delta|\mathcal{M}\big(R^\theta(\gamma_0)\big)|.
    \end{align*}
    In other words, $E$ is $(c_j,\delta,\theta)$-weakly porous. Applying the two other conditions shows that
    \begin{align*}
        C\delta^\alpha \geq C \delta_{j+1}^\alpha \geq C \eta^\alpha \delta_j^\alpha \geq \eta^\alpha (1-c_j).
    \end{align*}
    We can substitute $K = C \eta^{-\alpha}$ to finish the proof.
\end{proof}

\subsection{Muckenhoupt characterization via weakly porous sets}
Definition \ref{def: alpha improvement} states a profound feature of weakly porous sets in general. To show that the $\alpha$-improvement of parabolic weakly porous sets characterizes $A_1^+(\gamma)$ distance weights, we need two lemmas. On the other hand, proving that any parabolic weakly porous set is $\alpha$-improving is difficult, and it is done in Section \ref{sec: full charac}.

\begin{lemma} \label{lem: annular estimate}
    Suppose $E\subseteq \Rn$ is a nonempty set and $R(\gamma) \subseteq \Rn$ is an $E$-free parabolic rectangle with $0\leq \gamma \leq1/2$ such that $R(\gamma)$ is $E$-free. Let $0<\alpha< (n+p)^{-1}$. Then, 
    \[\int_{R(\gamma)} \dist_p\big((x,t),E\big)^{-\alpha(n+p)}\dx \dt \leq C |R(\gamma)|^{1-\alpha}\]
    for $C = C(n,p, \alpha) >0$.
\end{lemma}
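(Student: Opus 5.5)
Since $R(\gamma)$ is $E$-free, every point of $R(\gamma)$ lies at positive distance from $E$. The plan is to bound $\dist_p\big((x,t),E\big)$ from below by the parabolic distance to the boundary of $R(\gamma)$ (or its complement), and then integrate that lower bound. For $(x,t)\in R(\gamma)$ we have
\begin{align*}
    \dist_p\big((x,t),E\big)\geq \dist_p\big((x,t),\Rn\setminus R(\gamma)\big)=:\rho(x,t),
\end{align*}
because any point of $E$ lies outside $R(\gamma)$. Hence it suffices to prove $\int_{R(\gamma)}\rho^{-\alpha(n+p)}\dx\dt\leq C|R(\gamma)|^{1-\alpha}$. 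Write $L=l_x(R(\gamma))$, so that $|R(\gamma)|=(1-\gamma)L^{n+p}$, and this quantity is comparable to $L^{n+p}$ since $0\leq\gamma\leq 1/2$.

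The main step is a layer-cake (annular) estimate. Set $A_s=\{(x,t)\in R(\gamma):\rho(x,t)<s\}$ for $0<s\leq L$. The condition $\rho<s$ means that either some spatial coordinate $x_i$ lies within $s$ of a face of $Q$, or $t$ lies within $s^p$ of an endpoint of the temporal interval. A union bound gives
\begin{align*}
    |A_s|\leq 2n\, s\, L^{n-1}\,(1-\gamma)L^p+2 s^p L^n\leq C_n\big(sL^{n+p-1}+s^pL^n\big).
\end{align*}
Since $s\leq L$ and $p>1$, we have $s^pL^n\leq sL^{n+p-1}$, so $|A_s|\leq C\,(s/L)\,L^{n+p}$. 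Moreover, $\rho\leq L$ on $R(\gamma)$ up to a constant, because $\rho\leq L/2$ from the spatial faces.

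Next, decompose $R(\gamma)$ into the dyadic shells $A_{2^{-k}L}\setminus A_{2^{-k-1}L}$ for $k\geq 0$ (one could also use the layer-cake formula). On the $k$th shell, $\rho^{-\alpha(n+p)}\leq (2^{-k-1}L)^{-\alpha(n+p)}$, and the shell has measure at most $C2^{-k}L^{n+p}$. Summing,
\begin{align*}
    \int_{R(\gamma)}\rho^{-\alpha(n+p)}\leq C L^{n+p-\alpha(n+p)}\sum_{k\geq0}2^{-k(1-\alpha(n+p))}\,2^{\alpha(n+p)}.
\end{align*}
The geometric series converges precisely because $\alpha(n+p)<1$, which is the hypothesis $\alpha<(n+p)^{-1}$. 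Finally, since $L^{(n+p)(1-\alpha)}\leq 2^{1-\alpha}|R(\gamma)|^{1-\alpha}$, the constant depends only on $n$, $p$ and $\alpha$.

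The step I expect to need the most care is the boundary-layer measure estimate. In particular, the temporal layer has thickness $s^p$ rather than $s$, and we must check that it is dominated by the spatial layer for $s\leq L$; this uses $p>1$. The edge case $s>L$ does not arise, since $\rho\leq L/2$ throughout $R(\gamma)$.
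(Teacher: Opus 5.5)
Your proof is correct and follows essentially the same route as the paper: both bound $\dist_p(\cdot,E)$ from below by the parabolic distance to the boundary of $R(\gamma)$, split $R(\gamma)$ (up to a null set) into dyadic boundary layers of thickness comparable to $2^{-k}L$ and measure $\lesssim 2^{-k}|R(\gamma)|$, and sum a geometric series that converges exactly because $\alpha(n+p)<1$. The only difference is cosmetic. You take the layers to be sublevel sets of $\dist_p(\cdot,\Rn\setminus R(\gamma))$ and estimate their measure by a union bound over faces, using $s^pL^n\leq sL^{n+p-1}$. The paper instead uses the annuli between concentric shrunken rectangles of side length $(1-2^{-i})L$ and computes their volumes directly.
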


\begin{proof}
    Let $E \subseteq \Rn$ be a nonempty set and choose $R(\gamma)\subseteq \Rn$ with $0\leq \gamma \leq 1/2$ and a side length $L>0$. Moreover, we assume that $R(\gamma)$ is an $E$-free parabolic rectangle. Since the parabolic distance $\dist_p(\cdot,\cdot)$ is translation invariant, we may also assume that the center point of $R(\gamma)$ is at the origin. 
    
    We choose a sequence $\big(R_i(\gamma)\big)_{i\in \N}$ of parabolic rectangles $R_i(\gamma) \subseteq R(\gamma)$ such that the center point of $R_i(\gamma)$ is also at the origin and $l_x\big(R_i(\gamma)\big) = (1-2^{-i})L$ for each $i = 1,2,\dots $, and set $R_0(\gamma) = \emptyset$. It follows that $R_i(\gamma) \subseteq R_{i+1}(\gamma)$. Since
    \begin{align*}
        |R_i(\gamma)|=(1-2^{-i})^{n+p}|R(\gamma)|,
    \end{align*}
    we conclude that the union of the sequence approximates $R(\gamma)$, that is,
    \begin{align*}
        |R(\gamma)\setminus \bigcup_{i=0}^\infty R_i(\gamma)| = 0.
    \end{align*}
    Furthermore, the distance from $R_i(\gamma)$ to $E$ can be estimated with $|R(\gamma)|$. Let us take any $(x,t) \in R_i(\gamma)$. Since $R(\gamma)$ is $E$-free, the parabolic distance between $(x,t)$ and $E$ satisfies
    \begin{align}
        \begin{split}
            \dist_p\big((x,t),E\big) &\geq \dist_p\big((x,t),\partial R(\gamma)\big) \geq \dist_p\big(\partial R_i(\gamma), \partial R(\gamma)\big) \\
            &= \max \bigg\{ \frac{1}{2}\cdot 2^{-i}L, \Big(\frac{1-\gamma}{2} \cdot\big(1- (1-2^{-i})^p\big)L^p\Big)^\frac{1}{p} \bigg\} \\
            &\geq 2^{-i}L \max \bigg\{ \frac{1}{2}, \Big(\frac{1-\gamma}{2}\Big)^\frac{1}{p}\bigg\}  \geq \frac{1}{2}\cdot 2^{-i} L \\
            &= \frac{1}{2}\cdot2^{-i}\cdot (1-\gamma)^{-\frac{1}{n+p}} |R(\gamma)|^\frac{1}{n+p} \\
            &\geq \frac{1}{2}\cdot 2^{-i} |R(\gamma)|^\frac{1}{n+p}. 
        \end{split} \label{eq: annular lower bound}
    \end{align}

    Next, we define an auxiliary sequence of disjoint sets for every $i =1,2,\dots$ as
    \begin{align*}
        A_i = R_i(\gamma)\setminus R_{i-1}(\gamma) \subseteq R_i(\gamma).
    \end{align*}
    It clearly follows from the construction that 
    \begin{align*}
        \bigcup_{i=1}^\infty R_i(\gamma) = \bigcup_{i=1}^\infty A_i,
    \end{align*}
    so the union of the auxiliary sequence also approximates $R(\gamma)$, that is,
    \begin{align}
        \Big|R(\gamma)\setminus \bigcup_{i=1}^\infty A_i \Big| = 0. \label{eq: annular approximation}
    \end{align}
    To estimate the measure of each $A_i$, we consider the cases $i = 1$ and $i = 2,3,\dots$ separately. For $i =1$ we opt for a crude estimate to match better the estimate the other case. We have
    \begin{align*}
        |A_1| = 2^{-(n+p)} |R(\gamma)| \leq  2^{-1} (n+p) |R(\gamma)|.
    \end{align*}
    For the other indices $i =2,3,\dots$ the same estimate is sharper as
    \begin{align}
        \begin{split}
            |A_i| &= |R_i(\gamma)| - |R_{i-1}(\gamma)| = \big((1-2^{-i})^{n+p}-(1-2^{-i+1})^{n+p}\big) |R(\gamma)|  \\
            &=2^{-(n+p)i} \cdot \big((2^{i}-1)^{n+p} - (2^{i}-2)^{n+p}\big) |R(\gamma)|  \\
            &=2^{-(n+p)i} (n+p) |R(\gamma)|  \int_{2^{i}-2}^{2^{i} -1} \xi^{n+p-1} d\xi  \\
            &\leq 2^{-(n+p)i} (n+p) |R(\gamma)|\int_{2^{i}-2}^{2^{i} -1} (2^i)^{n+p-1} d\xi  \\
            &=2^{-i}(n+p)|R(\gamma)|.
        \end{split} \label{eq: annular upper bound}
    \end{align}
    
    Finally, the proof is completed by combining (\ref{eq: annular approximation}), (\ref{eq: annular lower bound}) and (\ref{eq: annular upper bound}) to obtain
    \begin{align*}
        \int_{R(\gamma)} \dist_p\big((x,t),E\big)^{-\alpha(n+p)} \dx \dt &= \sum_{i=1}^\infty \int_{A_i} \dist_p\big((x,t),E\big)^{-\alpha(n+p)} \dx \dt \\
        &\leq  \sum_{i=1}^\infty \int_{A_i}C_1^{-\alpha(n+p)} 2^{\alpha i(n+p)} |R(\gamma)|^{-\alpha} \dx \dt \\
        &= C_1^{-\alpha(n+p)} |R(\gamma)|^{-\alpha} \sum_{i=1}^\infty 2^{\alpha i(n+p)} |A_i| \\
        &\leq C_1^{-\alpha(n+p)} (n+p) |R(\gamma)|^{1-\alpha} \sum_{i=1}^\infty 2^{(\alpha(n+p) -1) i} \\
        &= C_2|R(\gamma)|^{1-\alpha},
    \end{align*}
    where the sum converges as $\alpha(n+p) - 1 <0$, and thus, $C_2 = C_2(n,p,\alpha) < \infty$.
\end{proof}

The proof the main theorem of this section, see Theorem \ref{theo: alpha improvement => A1}, uses heavily the following collections. Given $\theta\in \R$, a nonempty closed set $E \subseteq \Rn$, a parabolic rectangle $R(\gamma_0) \subseteq \Rn$ with $0\leq \gamma_0 \leq 1/2$ and a decreasing sequence $(\delta_i)_{i\in \N}$ with $\delta_i \rightarrow 0$, we define for every $i = 1,2,\dots$ that
\begin{align}
    \mathcal{C}_0 = \mathcal{C}_0\big(R(\gamma_0)\big)= \mathcal{F}_{\delta_0}^\theta\big(R(\gamma_0)\big) \quad \text{and} \quad \mathcal{C}_i = \mathcal{C}_i\big(R(\gamma_0)\big)=\mathcal{F}_{\delta_i}^\theta\big(R(\gamma_0)\big)\setminus \mathcal{F}_{\delta_{i-1}}^\theta\big(R(\gamma_0)\big). \label{eq: C collections}
\end{align}
If we further assume that $E$ is an $\alpha$-improving parabolic weakly porous set, these collections have some structure which we present in the following lemma.

\begin{lemma} \label{lem: towering sets}
    Suppose $E\subseteq \Rn$ is $(c_i, \delta_i,\theta)$-weakly porous for some $\theta \in \R$ and  $c_i,\delta_i \in (0,1)$ for every $i \in \N$, where $(\delta_i)_{i\in \N}$ is a decreasing sequence with $\delta_i  \rightarrow 0$, and $0 < 1- c_i \leq K \delta_i^{\alpha}$ for $\alpha,K >0$. Let $R(\gamma_0) \subseteq \Rn$ be a parabolic rectangle with $0\leq \gamma_0 \leq 1/2$, and define the collections $\mathcal{C}_i = \mathcal{C}_i\big(R(\gamma_0)\big)$ as above. Then, the following are true:
    
    \begin{enumerate}[label=(\roman*)]

        \item For any $k \in \N$ we have
        \[\bigcup_{i=0}^k \mathcal{C}_i = \mathcal{F}_{\delta_k}^\theta\big(R(\gamma_0)\big).\] \label{item: towering sets1}
        
        \item For every $i,j\in \N$ and for every $P(\gamma) \in \mathcal{C}_i$ with $0 \leq \gamma \leq 1/2$ and $Q(\beta) \in \mathcal{C}_j$ with $0\leq \beta \leq 1/2$ such that $P(\gamma) \neq Q(\beta)$ we have $P(\gamma) \cap Q(\beta) = \emptyset$. Moreover, if $i \neq j$, then necessarily $P(\gamma) \neq Q(\beta)$. \label{item: towering sets2}
        
        \item For every $i =1,2,\dots$ we have
        \[\sum_{P(\gamma) \in \mathcal{C}_i} |P(\gamma)| \leq (1-c_{i-1})|R(\gamma_0)| \leq K \delta_{i-1}^\alpha |R(\gamma_0)|.\] \label{item: towering sets3}
        
        \item Collections $\mathcal{C}_i$ satisfy \[\Big|R(\gamma_0) \setminus \bigg(\bigcup_{i=0}^\infty \bigcup_{P(\gamma) \in \mathcal{C}_i}P(\gamma)\bigg)\Big| = 0.\]\label{item: towering sets4}
    \end{enumerate}
\end{lemma}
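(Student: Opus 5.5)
The plan is to derive all four items from the monotonicity of the collections $\mathcal{F}_\delta^\theta\big(R(\gamma_0)\big)$ in $\delta$, together with the pairwise disjointness of maximal $E$-free dyadic rectangles. First I will observe that, since $(\delta_i)$ is decreasing, the threshold $\delta_i|\mathcal{M}(R^\theta(\gamma_0))|$ decreases in $i$. Hence $\mathcal{F}_{\delta_{i-1}}^\theta \subseteq \mathcal{F}_{\delta_i}^\theta$, because both are subcollections of the same family $\mathcal{F}\big(R(\gamma_0)\big)$ cut by a size condition.

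Item \ref{item: towering sets1} then follows by induction on $k$. The case $k=0$ is the definition of $\mathcal{C}_0$. For the inductive step, note that $\bigcup_{i=0}^{k}\mathcal{C}_i = \mathcal{F}_{\delta_{k-1}}^\theta \cup (\mathcal{F}_{\delta_k}^\theta\setminus\mathcal{F}_{\delta_{k-1}}^\theta) = \mathcal{F}_{\delta_k}^\theta$, where the last equality uses the inclusion above.

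For item \ref{item: towering sets2}, every $\mathcal{C}_i$ is contained in $\mathcal{F}\big(R(\gamma_0)\big)$. Two distinct members of that family are disjoint, since by nestedness of the dyadic lattice one would otherwise contain the other, contradicting maximality. The second claim is that the $\mathcal{C}_i$ are mutually disjoint as collections. Take $i<j$. Then $\mathcal{C}_i\subseteq \mathcal{F}_{\delta_i}^\theta\subseteq\mathcal{F}_{\delta_{j-1}}^\theta$, while $\mathcal{C}_j$ excludes $\mathcal{F}_{\delta_{j-1}}^\theta$. So no rectangle lies in both.

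For item \ref{item: towering sets3}, the members of $\mathcal{C}_i$ are pairwise disjoint rectangles of $\mathcal{F}\big(R(\gamma_0)\big)$ not belonging to $\mathcal{F}_{\delta_{i-1}}^\theta$. By item \ref{item: towering sets2} they are also disjoint from the rectangles of $\mathcal{F}_{\delta_{i-1}}^\theta$. All of these sit inside $R(\gamma_0)$, so
\begin{align*}
\sum_{P(\gamma)\in\mathcal{C}_i}|P(\gamma)| \le |R(\gamma_0)| - \sum_{P(\gamma)\in\mathcal{F}_{\delta_{i-1}}^\theta}|P(\gamma)| \le (1-c_{i-1})|R(\gamma_0)|.
\end{align*}
The last step uses Proposition \ref{prop: Fdelta} applied to the $(c_{i-1},\delta_{i-1},\theta)$-weak porosity. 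The hypothesis $1-c_{i-1}\le K\delta_{i-1}^\alpha$ then finishes the item.

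For item \ref{item: towering sets4}, by item \ref{item: towering sets1} the union over $i\le k$ equals the union of $\mathcal{F}_{\delta_k}^\theta$. Its complement in $R(\gamma_0)$ therefore has measure at most $(1-c_k)|R(\gamma_0)|\le K\delta_k^\alpha|R(\gamma_0)|$, again by Proposition \ref{prop: Fdelta}. These complements decrease in $k$ and their measures tend to $0$ since $\delta_k\to0$, so the full complement has measure zero. A subtle point worth flagging is the degenerate case $\mathcal{M}(R^\theta(\gamma_0))=\emptyset$, where all thresholds are zero. This case needs no separate treatment, because the estimates come directly from the porosity inequalities, which hold regardless. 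There is no real obstacle here; the only care needed is keeping the indices straight in item \ref{item: towering sets3}.
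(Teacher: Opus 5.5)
Your proposal is correct and follows essentially the same route as the paper: monotonicity $\mathcal{F}_{\delta_{i-1}}^\theta\subseteq\mathcal{F}_{\delta_i}^\theta$ for (i), pairwise disjointness of maximal $E$-free dyadic rectangles for (ii), and Proposition \ref{prop: Fdelta} at levels $i-1$ and $k$ for (iii) and (iv). Your argument that the collections $\mathcal{C}_i$ are mutually disjoint in (ii), via $\mathcal{C}_i\subseteq\mathcal{F}_{\delta_{j-1}}^\theta$ for $i<j$, is slightly more explicit than the paper's.
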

\begin{proof}
    \ref{item: towering sets1}  Let $P(\gamma) \in \mathcal{F}_{\delta_i}^\theta = \mathcal{F}_{\delta_i}^\theta \big(R(\gamma_0)\big) $ with $0\leq \gamma \leq 1/2$ for any $i \in \N$.  Clearly,
    \begin{align*}
        |P(\gamma)| \geq \delta_{i} \big|\mathcal{M}\big(R^\theta(\gamma_0)\big)\big| \geq \delta_{i+1} \big|\mathcal{M}\big(R^\theta(\gamma_0)\big)\big|,
    \end{align*}
    since $(\delta_i)_{i\in \N}$ is decreasing sequence. Thus, by the maximality of $P(\gamma)$ we have $P(\gamma) \in \mathcal{F}_{\delta_{i+1}}^\theta$, implying $\mathcal{F}_{\delta_i}^\theta \subseteq \mathcal{F}_{\delta_{i+1}}^\theta$ for every $i \in \N$. It is now clear from the definition of the collections $\mathcal{C}_i$ that
    \begin{align*}
        \bigcup_{i=0}^k \mathcal{C}_i = \mathcal{F}_{\delta_k}^\theta\big(R(\gamma_0)\big).
    \end{align*}
    for any $k\in \N$.

    \ref{item: towering sets2}  Take any $P(\gamma) \in \mathcal{C}_i \subseteq \mathcal{F}_{\delta_{i}}^\theta$ with $0\leq \gamma \leq 1/2$. From the properties of $\mathcal{F}_{\delta_{i}}^\theta$ it is clear that $\mathcal{C}_i$ is a pairwise disjoint collection and that $P(\gamma)$ is maximal. We then take $Q(\beta) \in \mathcal{C}_j \subseteq \mathcal{F}_{\delta_j}^\theta$ with $0 \leq \beta \leq 1/2$ for $j\neq i$. By the definition of the collections $\mathcal{C}_i$, then $P(\gamma) \neq Q(\beta)$.
    
    Without loss of generality we may assume $i>j$. Since $P(\gamma),Q(\beta) \in \mathcal{D}\big(R(\gamma_0)\big)$, by the nestedness of the dyadic lattice we have either $P(\gamma) \subset Q(\beta)$, $Q(\beta) \subset P(\gamma)$ or $P(\gamma) \cap Q(\beta) = \emptyset$. However, the two former cases lead to a contradiction, as by maximality, neither set can be contained in the other. This leaves us with only $P(\gamma) \cap Q(\beta) = \emptyset$.

    \ref{item: towering sets3}  By item \ref{item: towering sets2} and item \ref{item: towering sets1}, for every $i=1,2,\dots$ we must have 
    \begin{align*}
        \bigcup_{P(\gamma) \in \mathcal{C}_i} P(\gamma) \subseteq R(\gamma_0) \setminus \bigcup_{j=0}^{i-1} \bigcup_{Q(\beta) \in \mathcal{C}_{j}}Q(\beta) = R(\gamma_0) \setminus   \bigcup_{Q(\beta) \in \mathcal{F}_{\delta_{i-1}}^\theta}Q(\beta).
    \end{align*}
    Combining the result with Proposition \ref{prop: Fdelta} implies that
    \begin{align*}
        \sum_{P(\gamma) \in \mathcal{C}_i} |P(\gamma)| \leq (1-c_{i-1})|R(\gamma_0)| \leq K \delta_{i-1}^\alpha|R(\gamma_0)|.
    \end{align*}

    \ref{item: towering sets4}  As $|R(\gamma_0)| < \infty$, by monotone convergence, item \ref{item: towering sets1} and Proposition \ref{prop: Fdelta} we obtain
    \begin{align*}
        &\phantom{=}\Big|R(\gamma_0) \setminus \bigg(\bigcup_{i=0}^\infty \bigcup_{P(\gamma) \in \mathcal{C}_i}P(\gamma)\bigg)\Big| = \Big|\bigcap_{i=0}^\infty \Big(R(\gamma_0)\setminus \bigcup_{P(\gamma) \in \mathcal{C}_i}P(\gamma)\Big)\Big| \\
        &= \lim_{k \rightarrow \infty}\Big|\bigcap_{i=0}^k \Big(R(\gamma_0)\setminus \bigcup_{P(\gamma) \in \mathcal{C}_i}P(\gamma)\Big)\Big| =\lim_{k \rightarrow \infty}\Big|R(\gamma_0)\setminus \bigcup_{i=0}^k\bigcup_{P(\gamma) \in \mathcal{C}_i}P(\gamma)\Big| \\
        &=\lim_{k \rightarrow \infty}\Big|R(\gamma_0)\setminus \bigcup_{P(\gamma) \in  \mathcal{F}_{\delta_k}^\theta}P(\gamma)\Big| = \lim_{k\rightarrow \infty}\bigg(|R(\gamma_0)| - \sum_{P(\gamma)\in \mathcal{F}_{\delta_k}^\theta}|P(\gamma)|\bigg)\\
        &\leq \lim_{k\rightarrow \infty} (1 -c_k)|R(\gamma_0)| \leq \lim_{k \rightarrow \infty} K \delta_k^\alpha  |R(\gamma_0)| = 0.
    \end{align*}
\end{proof}

We are ready to prove that $\alpha$-improvement of a parabolic weakly porous set implies $A_1$-type inequalities. Observe that the following theorem holds for any translation $\theta \in \R$. In particular, when $\theta>1$, then we will have a characterization for distance weights in $A_1^+(\gamma)$, see Corollary \ref{cor: alpha improvement => A1}. A strategy similar to ours can be found in \cite{Vargas2026}.
\begin{theorem} \label{theo: alpha improvement => A1}
    Suppose $E \subseteq \Rn$ is $(c_0,\delta_0,\theta)$-weakly porous for some and $c_0,\delta_0 \in(0,1)$ and $\theta \in \R$.  If $E$ is $\alpha$-improving with constants $0< \alpha<(n+p)^{-1}$ and $K >0$, then for every $0<\beta < \alpha$ there exists a constant $C = C(n,p,d,\delta_0,\theta,K,\alpha,\beta) >0$ such that 
    \begin{align*}
        \dashint_{R(\gamma_0)} \dist_p\big((x,t),E\big)^{-\beta(n+p)} \dx \dt \leq C \essinf_{(x,t)\in R^\theta(\gamma_0)} \dist_p\big((x,t),E\big)^{-\beta(n+p)}.
    \end{align*}
    for every parabolic rectangle $R(\gamma_0) \subseteq \Rn$ with $0\leq \gamma_0 \leq 1/2$.
\end{theorem}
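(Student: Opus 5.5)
We work with the sequence $\delta_i = 2^{-i}\delta_0$. By the $\alpha$-improvement, $E$ is $(c_i,\delta_i,\theta)$-weakly porous with $1-c_i\le K\delta_i^\alpha$. Fix $R(\gamma_0)$ and write $M=\mathcal{M}\big(R^\theta(\gamma_0)\big)$.

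\emph{Degenerate case.} If $R^\theta(\gamma_0)\cap E=\emptyset$, then $M$ may be taken to be $R^\theta(\gamma_0)$ itself, or at least to be comparable to it. Indeed, by the leftmost inequality of Proposition \ref{prop: maximal hole}, which holds in this case as well, $\esssup_{R^\theta(\gamma_0)}\dist_p(\cdot,E)\ge \tfrac12 l_x(M)$. We also need the comparison $|M|\simeq |R(\gamma_0)|$ whenever $M\neq\emptyset$, and we record the following fact for later use:
\begin{align*}
\essinf_{R^\theta(\gamma_0)} \dist_p(\cdot,E)^{-\beta(n+p)} = \Big(\esssup_{R^\theta(\gamma_0)}\dist_p(\cdot,E)\Big)^{-\beta(n+p)} \geq 2^{-d\beta(n+p)} l_x(M)^{-\beta(n+p)}.
\end{align*}
This follows from the right inequality of Proposition \ref{prop: maximal hole}\ref{item: maximal hole 2} when $R^\theta(\gamma_0)\cap E\neq\emptyset$. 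When $R^\theta(\gamma_0)\cap E=\emptyset$, $M$ is the whole $R^\theta(\gamma_0)$, since it is dyadic and $E$-free. In that case I would bound the supremum of the distance directly, but it can be large and the argument fails. I therefore expect to split further: when $R^\theta(\gamma_0)$ is $E$-free, note that $\mathcal{F}_{\delta_0}^\theta$ then requires $|P|\ge\delta_0|R^\theta(\gamma_0)|$. This makes the maximal hole comparable to $R(\gamma_0)$, and I would compare the distance on $R(\gamma_0)$ and on $R^\theta(\gamma_0)$ through the porosity bound. This case needs separate care.

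\emph{Main estimate.} By Lemma \ref{lem: towering sets}\ref{item: towering sets4}, $R(\gamma_0)$ is covered up to a null set by the disjoint families $\mathcal{C}_i$. We split the integral accordingly:
\begin{align*}
\int_{R(\gamma_0)}\dist_p(\cdot,E)^{-\beta(n+p)} = \sum_{i=0}^\infty\sum_{P\in\mathcal{C}_i}\int_P \dist_p(\cdot,E)^{-\beta(n+p)}.
\end{align*}
Each $P\in\mathcal{C}_i$ is $E$-free. Since $\beta<\alpha<(n+p)^{-1}$, Lemma \ref{lem: annular estimate} gives $\int_P \le C|P|^{1-\beta}$. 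Since $P\in\mathcal{F}_{\delta_i}^\theta$, we have $|P|\ge \delta_i|M|$, so $|P|^{-\beta}\le \delta_i^{-\beta}|M|^{-\beta}$. Using Lemma \ref{lem: towering sets}\ref{item: towering sets3} for $i\ge1$ and the trivial bound for $i=0$, we get
\begin{align*}
\int_{R(\gamma_0)}\dist_p(\cdot,E)^{-\beta(n+p)} \le C|M|^{-\beta}|R(\gamma_0)|\Big(\delta_0^{-\beta}+K\sum_{i\ge1}\delta_i^{-\beta}\delta_{i-1}^{\alpha}\Big).
\end{align*}
The series is geometric with ratio $2^{\beta-\alpha}<1$, so it converges. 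The constant depends on $\delta_0,K,\alpha,\beta,n,p$.

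\emph{Conclusion.} Since $|M|^{-\beta}\simeq l_x(M)^{-\beta(n+p)}$ up to the factor $2^\beta$ coming from $1-\gamma\in[1/2,1]$, dividing by $|R(\gamma_0)|$ and applying the displayed essinf bound gives the claim. The main obstacle is the boundary situation: the case $R^\theta(\gamma_0)\cap E=\emptyset$, and the case $R(\gamma_0)\cap E=\emptyset$, where $\mathcal{F}=\{R(\gamma_0)\}$ and Lemma \ref{lem: annular estimate} applies directly. In these cases the relation between $M$ and the essential supremum must be obtained from the porosity condition itself rather than from Proposition \ref{prop: maximal hole}.
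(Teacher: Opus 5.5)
Your main estimate is the paper's. You cover $R(\gamma_0)$ by the families $\mathcal{C}_i$, apply Lemma \ref{lem: annular estimate} on each piece, use $|P|\ge\delta_i|\mathcal{M}(R^\theta(\gamma_0))|$ with Lemma \ref{lem: towering sets}\ref{item: towering sets3}, and sum the geometric series to get $\dashint_{R(\gamma_0)}\dist_p(\cdot,E)^{-\beta(n+p)}\le C|\mathcal{M}(R^\theta(\gamma_0))|^{-\beta}$. When $R^\theta(\gamma_0)\cap E\neq\emptyset$, Proposition \ref{prop: maximal hole}\ref{item: maximal hole 2} then finishes the proof.

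\textbf{The gap.} The case $R^\theta(\gamma_0)\cap E=\emptyset$ is left open, and your proposed remedy, getting the comparison ``from the porosity condition itself'', cannot work. In this case $\mathcal{M}(R^\theta(\gamma_0))=R^\theta(\gamma_0)$, so with $L=l_x(R(\gamma_0))$ the main estimate only gives a bound of order $L^{-\beta(n+p)}$. The right-hand side is of order $D^{-\beta(n+p)}$ once $D\ge L$, where $D=\dist_p(R^\theta(\gamma_0),E)$, and $D/L$ can be arbitrarily large. The loss occurs in Lemma \ref{lem: annular estimate}, which sees only the size of the $E$-free piece. Porosity cannot recover it: when $D$ is large compared with $L$, both $R(\gamma_0)$ and $R^\theta(\gamma_0)$ are $E$-free, and porosity says nothing more. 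The case $R(\gamma_0)\cap E=\emptyset$ that you flag is not an obstacle: then $\mathcal{C}_0=\{R(\gamma_0)\}$ and the main estimate applies unchanged.

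\textbf{The missing step.} Split according to $D$ and use only the triangle inequality for $\dist_p$.

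If $D\le 2(|\theta|+1)^{1/p}L$, every $(x,t)\in R^\theta(\gamma_0)$ satisfies $\dist_p((x,t),E)\le\textnormal{diam}_p(R^\theta(\gamma_0))+D\le(1+2(|\theta|+1)^{1/p})L$. Since $L=l_x(\mathcal{M}(R^\theta(\gamma_0)))$, the main estimate concludes exactly as in the case $R^\theta(\gamma_0)\cap E\neq\emptyset$.

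If $D>2(|\theta|+1)^{1/p}L$, discard the main estimate. For $(y,s)\in R(\gamma_0)$ and $(x,t)\in R^\theta(\gamma_0)$ you have $\dist_p((x,t),(y,s))\le(|\theta|+1)^{1/p}L<\frac12 D\le\frac12\dist_p((x,t),E)$, hence $\dist_p((y,s),E)\ge\frac12\dist_p((x,t),E)$. Taking the essential supremum in $(x,t)$ and averaging in $(y,s)$ gives the claim with constant $2^{\beta(n+p)}$.

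This three-case ending is how the paper completes the proof. Without it, your argument proves the inequality only for rectangles with $R^\theta(\gamma_0)\cap E\neq\emptyset$.
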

\begin{proof}
     Let $R(\gamma_0) \subseteq \Rn$ be a parabolic rectangle with $0 \leq \gamma_0 \leq 1/2$. We choose the sequence $(\delta_i)_{i\in \N}$ with $\delta_i = 2^{-i} \delta_0$, and by $\alpha$-improvement $E$ is $(c_i,\delta_i, \theta)$-weakly porous for every $i \in \N$ where $0<c_i<1$ satisfies
     \begin{align*}
         1-c_i \leq K \delta_i^\alpha.
     \end{align*}
     
    Next, we recall the collections $\mathcal{C}_i = \mathcal{C}_i\big(R(\gamma_0)\big)$. By Lemma \ref{lem: towering sets}\ref{item: towering sets4} the collections can be used to cover $R(\gamma_0)$ almost everywhere, that is,
    \begin{align*}
        \Big|R(\gamma_0) \setminus \bigg(\bigcup_{i=0}^\infty \bigcup_{P(\gamma) \in \mathcal{C}_i}P(\gamma)\bigg)\Big| = 0.
    \end{align*}
    Thus, by setting $0<\beta < \alpha$ we may write using Lemma \ref{lem: annular estimate} that
    \begin{align}
        \begin{split}
            &\phantom{=}\int_{R(\gamma_0)} \dist_p\big((x,t),E\big)^{-\beta(n+p)} \dx \dt \\
            &= \int_{R(\gamma_0)}  \sum_{i=0}^\infty \sum_{P(\gamma) \in \mathcal{C}_i} \mathbbm{1}_{P(\gamma)}(x,t) \dist_p\big((x,t),E\big)^{-\beta(n+p)} \dx \dt \\
            &= \sum_{i=0}^\infty \sum_{P(\gamma) \in \mathcal{C}_i} \int_{P(\gamma)}  \dist_p\big((x,t),E\big)^{-\beta(n+p)} \dx \dt \\
            &\leq C_0\sum_{i=0}^\infty \sum_{P(\gamma) \in \mathcal{C}_i}  |P(\gamma)|^{1-\beta},
        \end{split} \label{eq: A1 pdc1}
    \end{align}
    where $C_0 = C_0(n,p,\beta)$ is from Lemma \ref{lem: annular estimate}.
    
    We study the sums of (\ref{eq: A1 pdc1}) separately. By $E$ being $(c_0,\delta_0,\theta)$-weakly porous we obtain the estimate
    \begin{align*}
        \sum_{P(\gamma) \in \mathcal{C}_0}  |P(\gamma)|^{1-\beta} &= \sum_{P(\gamma) \in \mathcal{F}_{\delta_0}^\theta}  |P(\gamma)|^{1-\beta} \leq \delta_0^{-\beta} |R(\gamma_0)||\mathcal{M}\big(R^\theta(\gamma_0)\big)|^{-\beta}.
    \end{align*}
    On the other hand, $E$ being $(c_i,\delta_i,\theta)$-weakly porous for every $i = 1,2,\dots$ and Lemma \ref{lem: towering sets}\ref{item: towering sets3} imply
    \begin{align*}
        \sum_{P(\gamma) \in \mathcal{C}_i}  |P(\gamma)|^{1-\beta} &\leq \delta_i^{-\beta} \sum_{P(\gamma) \in \mathcal{C}_i}  |P(\gamma)| |\mathcal{M}\big(R^\theta(\gamma_0)\big)|^{-\beta} \\
        &\leq \delta_i^{-\beta} K \delta_{i-1}^{\alpha} |R(\gamma_0)| |\mathcal{M}\big(R^\theta(\gamma_0)\big)|^{-\beta} \\
        &= 2^\alpha K \delta_0^{\alpha- \beta} \cdot 2^{-(\alpha - \beta)i} |R(\gamma_0) |\mathcal{M}\big(R^\theta(\gamma_0)\big)|^{-\beta} \\
        &= C_1 2^{-\varepsilon i}  |R(\gamma_0)| |\mathcal{M}\big(R^\theta(\gamma_0)\big)|^{-\beta},
    \end{align*}
    where $\varepsilon = \alpha -\beta > 0$ and $C_1 = 2^\alpha K\delta_0^\varepsilon$. Substituting the estimates to (\ref{eq: A1 pdc1}) and dividing by $|R(\gamma_0)|$ yields
    \begin{align}
        \begin{split}
            \dashint_{R(\gamma_0)} \dist_p\big((x,t),E\big)^{-\beta(n+p)} \dx \dt  &\leq   |\mathcal{M}\big(R^\theta(\gamma_0)\big)|^{-\beta}C_0\bigg(\delta_0^{-\beta}+ C_1\sum_{i=1}^\infty (2^{-\varepsilon})^i\bigg) \\
            &=  C_2 |\mathcal{M}\big(R^\theta(\gamma_0)\big)|^{-\beta}, 
        \end{split}\label{eq: A1 pdc2}
    \end{align}
    where $C_2 =C_2(n,p, \delta_0, K, \alpha,\beta) <\infty$ as $\varepsilon >0$.
    
    Since the maximal hole function is closely related to the essential supremum of the distance function, the estimate (\ref{eq: A1 pdc2}) is rather close to the desired form. For the next part, we denote $l_x\big(R(\gamma_0)\big) = L$ for simplicity, and restrict ourselves to study three cases:
    \begin{enumerate}[label=(\roman*)]
        \item $R^\theta(\gamma_0) \cap E \neq \emptyset$ \label{item: A1 pdc1 case 1}
        
        \item $R^\theta(\gamma_0) \cap E = \emptyset$ and $\dist_p\big(R^\theta(\gamma_0), E\big) \leq  2(|\theta| +1)^\frac{1}{p} L$ \label{item: A1 pdc1 case 2}

        \item $R^\theta(\gamma_0) \cap E = \emptyset$ and $\dist_p\big(R^\theta(\gamma_0), E\big) > 2(|\theta| +1)^\frac{1}{p} L$. \label{item: A1 pdc1 case 3}
    \end{enumerate}
    
    \ref{item: A1 pdc1 case 1} Suppose $R^\theta(\gamma_0) \cap E \neq \emptyset$. This is the easy case as then by Proposition \ref{prop: maximal hole}\ref{item: maximal hole 2} the essential supremum is directly comparable to the side length of the maximal hole. Recall $E$ is implicitly assumed to be closed. We have
    \begin{align*}
        \esssup_{(x,t)\in R^\theta(\gamma_0)} \dist_p\big((x,t),E\big) &\leq 2^d l_x\big(\mathcal{M}\big(R^\theta(\gamma_0)\big)\big)\\
        &\leq 2^{d}(1-\gamma_1)^{-\frac{1}{n+p}}|\mathcal{M}\big(R^\theta(\gamma_0)\big)|^{\frac{1}{n+p}} \\
        &= C_3 |\mathcal{M}\big(R^\theta(\gamma_0)\big)|^{\frac{1}{n+p}},
    \end{align*}
    where $C_3 = 2^{d+\frac{1}{n+p}}$ and $0\leq\gamma_1\leq 1/2$ is the truncation parameter of $\mathcal{M}\big(R^\theta(\gamma_0)\big)$. Raising both sides of the inequality above to the power of $\beta(n+p)$ and reordering yields
    \begin{align*}
        |\mathcal{M}\big(R^\theta(\gamma_0)\big)|^{-\beta} \leq C_3^{\beta(n+p)} \essinf_{(x,t) \in R^\theta(\gamma_0)} \dist_p\big((x,t),E\big)^{-\beta(n+p)}.
    \end{align*}
    Substituting the above into (\ref{eq: A1 pdc2}) results in
    \begin{align}
        \dashint_{R(\gamma_0)} \dist_p((x,t),E)^{-\beta(n+p)} \dx \dt \leq C_2 C_3^{\beta(n+p)} \essinf_{(x,t) \in R^\theta(\gamma_0)} \dist_p\big((x,t),E\big)^{-\beta(n+p)}, \label{eq: A1 pdc3}
    \end{align}
    proving the claim for the first case.

    \ref{item: A1 pdc1 case 2} Suppose $R^\theta(\gamma_0) \cap E = \emptyset$ and $\dist_p\big(R^\theta(\gamma_0), E\big) \leq  2(|\theta| +1)^\frac{1}{p} L$. Observe that is this case $\mathcal{M}\big(R^\theta(\gamma_0)\big) = R^\theta(\gamma_0)$. Since the $E$ is not very far from $R^\theta(\gamma_0)$ in this case, the parabolic distance between any $(x,t) \in R^\theta(\gamma_0)$ and $E$ is still comparable to the side length of $R^\theta(\gamma_0)$ and consequently to the side length of $\mathcal{M}\big(R^\theta(\gamma_0)\big)$. To show this, we use the fact that the parabolic distance is a metric and is thereby a subject to the triangle inequality. Hence, we get
    \begin{align*}
        \dist_p\big((x,t),E\big) &\leq \text{diam}_p\big(R^\theta(\gamma_0)\big) + \dist_p\big(R^\theta(\gamma_0), E\big) \leq \big(1+ 2(|\theta| +1)^\frac{1}{p}\big)L\\
        &= \big(1+ 2(|\theta| +1)^\frac{1}{p}\big) l_x\big(\mathcal{M}\big(R^\theta(\gamma_0)\big)\big) \\
        &= (1-\gamma_1)^{-\frac{1}{n+p}}\big(1+2(|\theta| +1)^\frac{1}{p}\big) |\mathcal{M}\big(R^\theta(\gamma_0)\big)|^{\frac{1}{n+p}} \\
        &\leq C_4 |\mathcal{M}\big(R^\theta(\gamma_0)\big)|^{\frac{1}{n+p}},
    \end{align*}
    where $C_4 = 2^{\frac{1}{n+p}}\big(1+2(|\theta| +1)^\frac{1}{p}\big)$. Since the right hand side is independent of $(x,t)$, we can take the essential supremum over every $(x,t) \in R^\theta(\gamma_0)$ and raise both sides to the power of $\beta(n+p)$. After some some reordering, we get
    \begin{align*}
        |\mathcal{M}\big(R^\theta(\gamma_0)\big)|^{-\beta} \leq C_4^{\beta(n+p)} \essinf_{(x,t) \in R^\theta(\gamma_0)} \dist_p\big((x,t),E\big)^{-\beta(n+p)}.
    \end{align*}
    Substituting the above into (\ref{eq: A1 pdc2}) results in
    \begin{align}
        \dashint_{R(\gamma_0)} \dist_p\big((x,t),E\big)^{-\beta(n+p)} \dx \dt \leq C_2 C_4^{\beta(n+p)} \essinf_{(x,t) \in R^\theta(\gamma_0)} \dist_p\big((x,t),E\big)^{-\beta(n+p)}, \label{eq: A1 pdc4}
    \end{align}
    proving the claim in the second case.

    \ref{item: A1 pdc1 case 3} Suppose $R^\theta(\gamma_0) \cap E = \emptyset$ and $\dist_p\big(R^\theta(\gamma_0), E\big) >  2(|\theta| +1)^\frac{1}{p} L$. In this case it is no longer possible compare the parabolic distance to the side length of $\mathcal{M}\big(R^\theta(\gamma_0)\big)$. This means that the estimate (\ref{eq: A1 pdc2}) is no longer useful. Fortunately, now the set $E$ is also far apart from $R(\gamma_0)$.
    
    Let us take any $(x,t) \in R^\theta(\gamma_0)$ and $(y,s) \in R(\gamma_0)$. The triangle inequality of the parabolic metric is useful also here. We get
    \begin{align*}
        &\phantom{=}\dist_p\big((y,s),E\big) \geq \dist_p\big((x,t),E\big) - \dist_p\big((x,t),(y,s)\big) \\
        &\geq \dist_p\big((x,t),E\big) -\max \bigg\{ L ,\Big(\dist_t\big(\partial_{\textnormal{low}}R(\gamma_0), \partial_{\textnormal{low}}R^\theta(\gamma_0)\big) + l_t\big(R(\gamma_0)\big)\Big)^\frac{1}{p} \bigg\} \\
        &= \dist_p\big((x,t),E\big) - \max \Big\{1, \Big((1-\gamma_0)(|\theta|+1)\Big)^\frac{1}{p} \Big\}L\\
        &\geq \dist_p\big((x,t),E\big) - (|\theta|+1)^\frac{1}{p} L\\
        &> \dist_p\big((x,t),E\big) - \frac{1}{2}\dist_p\big(R^\theta(\gamma_0), E\big)\\
        &\geq \frac{1}{2}\dist_p\big((x,t),E\big).
    \end{align*}
    Since the left hand side is independent of any $(x,t)\in R^\theta(\gamma_0)$, we can take the essential supremum over every $(x,t) \in R^\theta(\gamma_0)$ and raise both sides to power of $-\beta(n+p)$. We get
    \begin{align*}
        \dist_p\big((y,s),E\big)^{-\beta(n+p)} \leq 2^{\beta(n+p)} \essinf_{(x,t) \in R^\theta(\gamma_0)} \dist_p\big((x,t),E\big)^{-\beta(n+p)},
    \end{align*}
    for which we can take the integral average over $R(\gamma_0)$ on both sides, yielding
    \begin{align}
        \dashint_{R(\gamma_0)} \dist_p\big((y,s),E\big)^{-\beta(n+p)} \dy \ds \leq 2^{\beta(n+p)} \essinf_{(x,t) \in R^\theta(\gamma_0)} \dist_p\big((x,t),E\big)^{-\beta(n+p)}. \label{eq: A1 pdc5}
    \end{align}
    
    We conclude the proof by combining the estimates (\ref{eq: A1 pdc3}), (\ref{eq: A1 pdc4}) and (\ref{eq: A1 pdc5}) to obtain
    \begin{align*}
        \dashint_{R(\gamma_0)} \dist_p\big((x,t),E\big)^{-\beta(n+p)} \dx \dt \leq C \essinf_{(x,t) \in R^\theta(\gamma_0)} \dist_p\big((x,t),E\big)^{-\beta(n+p)},
    \end{align*}
    where $C = C(n,p,d,\delta_0,\theta, K, \alpha, \beta) = \max \big\{ 2^{\beta(n+p)}, C_2 C_3^{\beta(n+p)}, C_2 C_4^{\beta(n+p)} \big\}$.
\end{proof}

As a natural corollary, we obtain an $A_1^+(\gamma)$ distance weights characterization via $\alpha$-improvement of parabolic weakly porous sets. The first direction was already shown in Theorem \ref{theo: A1 => wp}. 
\begin{corollary} \label{cor: alpha improvement => A1}
    Suppose $E \subseteq \Rn$ is $(c,\delta,\theta)$-weakly porous for some $c,\delta \in (0,1)$ and $\theta >1$.  If $E$ is $\alpha$-improving for $0< \alpha<(n+p)^{-1}$, then $\dist_p(\cdot,E)^{-\beta(n+p)} \in A_1^+(\gamma)$ for $0<\gamma<1$ for every $0<\beta < \alpha$.
\end{corollary}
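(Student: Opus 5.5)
The plan is to combine Theorem \ref{theo: alpha improvement => A1} with Lemma \ref{lem: A1 starting point}. Fix $0<\beta<\alpha$ and set $w=\dist_p(\cdot,E)^{-\beta(n+p)}$. The set $E$ is $(c,\delta,\theta)$-weakly porous and $\alpha$-improving with some constant $K>0$, in the sense of Definition \ref{def: alpha improvement}, with $\delta$ playing the role of $\delta_0$. Since $0<\alpha<(n+p)^{-1}$, all hypotheses of Theorem \ref{theo: alpha improvement => A1} hold. That theorem gives a constant $C=C(n,p,d,\delta,\theta,K,\alpha,\beta)$ such that
\begin{align*}
    \dashint_{R(\gamma_0)} w \dx \dt \leq C \essinf_{(x,t)\in R^\theta(\gamma_0)} w(x,t)
\end{align*}
for every parabolic rectangle $R(\gamma_0)$ with $0\le\gamma_0\le 1/2$.

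Next I would check that $w$ is a weight, i.e.\ nonnegative and locally integrable. Nonnegativity is clear. By Proposition \ref{prop: 0 measure}, $|E|=0$, so $w<\infty$ almost everywhere. Any compact set is covered by finitely many parabolic rectangles $R(\gamma_0)$. For each of them the essential infimum of $w$ over $R^\theta(\gamma_0)$ is finite, because $w$ is finite a.e.\ on $R^\theta(\gamma_0)$, which has positive measure. The displayed inequality then gives $\int_{R(\gamma_0)}w<\infty$, so $w\in L^1_{\mathrm{loc}}(\Rn)$.

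Finally, since $\theta>1$, Lemma \ref{lem: A1 starting point} applies for any $0<\gamma<1$. It shows that the inequality above, valid for all $R(\gamma_0)$ with $0\le\gamma_0\le1/2$, is equivalent to $w\in A_1^+(\gamma)$. This concludes the corollary.

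The only potential obstacle is making sure Lemma \ref{lem: A1 starting point} may be used in the direction we need. Its ``if'' direction is exactly the chaining argument of \cite[Theorem 3.1]{KinnunenMyyry2024} that removes the dependence on the particular lag $\theta>1$ and the truncation $\gamma$. We take it as given; the rest is a direct substitution.
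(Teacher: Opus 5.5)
Your proposal is correct and follows the paper's proof exactly: the corollary is obtained by feeding the inequality of Theorem \ref{theo: alpha improvement => A1} into the ``if'' direction of Lemma \ref{lem: A1 starting point}. Your extra check that $\dist_p(\cdot,E)^{-\beta(n+p)}$ is locally integrable is valid and is a detail the paper leaves implicit. That check uses $|E|=0$ from Proposition \ref{prop: 0 measure} together with the finiteness of the essential infimum over $R^\theta(\gamma_0)$.
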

\begin{proof}
    Let $\theta>1$ and $0< \beta <\alpha< (n+p)^{-1}$. Then, Theorem \ref{theo: alpha improvement => A1} and Lemma \ref{lem: A1 starting point} imply that $\dist_p(\cdot,E)^{-\beta(n+p)} \in A_1^+(\gamma)$ for any $0 <\gamma <1$.
\end{proof}

\section{Doubling and translation results} \label{sec: doubling results}

The time-lag invariance of $A_1^+(\gamma)$, see \cite[Theorem 3.1]{KinnunenMyyry2024}, motivates to show similar results for the parabolic weak porosity. Furthermore, the doubling of the maximal hole function in \cite[Lemma 3.2]{ALMV24} also have an analogy as the forward-in-time doubling of the maximal hole function. In this section, we demonstrate both of these features, which are necessary in Section \ref{sec: stopping conditions} and Section \ref{sec: full charac}, while also being interesting as such. 

Here it is essential that we have a positive time-lag, that is, $\theta>1$ to be able to formulate necessary chaining arguments. Moreover, the parabolic geometry via $p>1$ plays an important role. However, instead of proving separately the time-lag invariance and the doubling property, it turns out that we need a stronger result that combines these two into one theorem.

\begin{theorem} \label{theo: doubling porosity}
    Suppose $E \subseteq \Rn$ is $(c, \delta,\theta)$-weakly porous for some $c,\delta \in (0,1)$ and $\theta >1$. Then, for every $\psi > 1$ there exists $0<\sigma <1$ such that for any pair of parabolic rectangles $R(\gamma_0)\subseteq \Rn$ and $P(\gamma) \in \mathcal{D}_1\big(R(\gamma_0)\big)$ with $\gamma_0,\gamma \in [0,1/2]$ there exist pairwise disjoint $E$-free $S_i(\alpha_i) \in \mathcal{D}\big(P(\gamma)\big)$ with $0\leq \alpha_i \leq 1/2$ such that
    \begin{align*}
        |S_i(\alpha_i)| \geq \sigma |\mathcal{M}\big(R^\psi(\gamma_0)\big)|
    \end{align*}
    for each $i = 1,2,\dots, N$ and 
    \begin{align*}
        \sum_{i=1}^N |S_i(\alpha_i)| \geq c |P(\gamma)|.
    \end{align*}
     In particular, given any $0<c_0<1$ and $1<\theta_1\leq \theta_2 < \infty$, if $c\geq c_0$ and $\psi \in [\theta_1,\theta_2]$, then $\sigma = C \delta^{\nu}$ for constants $C = C(n,p,d,c_0,\theta,\theta_1,\theta_2) >0$ and $\nu = \nu(n,p,d,c_0,\theta,\theta_1,\theta_2)>0$.
\end{theorem}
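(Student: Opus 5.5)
The plan is to apply the weak porosity of $E$ to $P(\gamma)$ itself and then transfer the size of the holes from $P^\theta(\gamma)$ to $R^\psi(\gamma_0)$. Since $P(\gamma)$ is a parabolic rectangle with $0\le\gamma\le 1/2$, Definition \ref{def: pwp general} (through Proposition \ref{prop: Fdelta}) gives pairwise disjoint $E$-free $S_i(\alpha_i)\in\mathcal{D}\big(P(\gamma)\big)$ with
\begin{align*}
    \sum_i|S_i(\alpha_i)|\ge c|P(\gamma)| \quad\text{and}\quad |S_i(\alpha_i)|\ge\delta|\mathcal{M}\big(P^\theta(\gamma)\big)|.
\end{align*}
So the whole statement reduces to a \emph{forward-in-time doubling inequality} of the form
\begin{align*}
    |\mathcal{M}\big(P^\theta(\gamma)\big)|\ge C\delta^{\nu'}|\mathcal{M}\big(R^\psi(\gamma_0)\big)|
\end{align*}
for every child $P(\gamma)\in\mathcal{D}_1\big(R(\gamma_0)\big)$ and every $\psi\in[\theta_1,\theta_2]$. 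Then $\sigma=C\delta^{1+\nu'}$.

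I would prove this inequality using two elementary facts. The first is a comparison principle: if $S$ is an $E$-free rectangle contained in a rectangle $Z$, then by the approximation property of the dyadic lattice, $|\mathcal{M}(Z)|\ge 4^{-d(n+p)}|S|$. The second is that porosity propagates holes \emph{backwards} in time: any rectangle $X$ contains an $E$-free dyadic subrectangle of measure at least $\delta|\mathcal{M}(X^\theta)|$, because $c>0$ forces $N\ge1$.

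The reduction would go as follows. Cover $R^\psi(\gamma_0)$ by boundedly many rectangles $T_j$ of the scale of $P(\gamma)$; their number depends only on $n,p,d,\theta_2$. Either $\mathcal{M}\big(R^\psi(\gamma_0)\big)$ is large enough to contain a whole $T_j$, in which case $|\mathcal{M}(T_j)|=|T_j|\gtrsim|R^\psi(\gamma_0)|\ge|\mathcal{M}(R^\psi(\gamma_0))|$. Or it meets some $T_j$ in a comparable portion, in which case the comparison principle gives $|\mathcal{M}(T_j)|\gtrsim|\mathcal{M}(R^\psi(\gamma_0))|$. It then suffices to link $P^\theta(\gamma)$ to each $T_j$, which lies later in time by an amount between roughly $(\theta_1-1)l_t(R)$ and $(\theta_2+1)l_t(R)$.

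The link is a chain $X_0\subseteq P^\theta(\gamma)$, $X_1,\dots,X_m$, where each $X_k$ is a parabolic rectangle with $X_k^\theta$ containing (or comparable to, via the approximation property) a dyadic piece of $X_{k+1}$. Applying backward propagation $m$ times yields $|\mathcal{M}(P^\theta(\gamma))|\gtrsim\delta^m|\mathcal{M}(T_j)|$. The number of steps $m$ must be bounded in terms of $n,p,d,\theta,\theta_1,\theta_2$, which gives $\nu'=m$. Here $p>1$ is used: a rectangle of side $r$ is translated by $\theta(1-\gamma)r^p$ in time, so by choosing the scales of the $X_k$ appropriately (growing or shrinking geometrically, as in Lemma \ref{lem: bounded distance}), the total time shift $\sum_k\theta l_t(X_k)$ can be tuned to match the gap between $P^\theta(\gamma)$ and $T_j$ exactly, with the number of steps uniformly bounded.

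The main obstacle I expect is the spatial displacement. $T_j$ may sit at a different spatial position inside $\mathrm{pr}_x(R(\gamma_0))$ than $P(\gamma)$, while a single porosity step keeps the spatial projection fixed ($X^\theta$ has the same cube as $X$). My first attempt would be to pass through larger intermediate rectangles whose spatial cube covers both $\mathrm{pr}_x P$ and $\mathrm{pr}_x T_j$. I would use the full-measure content of porosity, not just a single hole: the holes of a large rectangle $Y$ cover a fraction $c$ of $Y$, so by pigeonholing over the boundedly many children, some child of $Y$ positioned spatially over $P(\gamma)$ must itself contain an $E$-free piece of size $\gtrsim\delta|\mathcal{M}(Y^\theta)|$. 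If needed, I would use this child as the next link, going down in scale toward $P^\theta(\gamma)$ while the time lag is absorbed by the $p>1$ scaling.

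Making the pigeonholing work for an arbitrary prescribed child may require replacing $c$ by a fraction close to $1$. That in turn may require first proving the plain time-lag translation result, namely comparability of $\mathcal{M}(X^{\theta})$ and $\mathcal{M}(X^{\psi})$, as a separate iterative lemma, and then combining it with the scale reduction. Tracking that every constant is uniform for $c\ge c_0$ and $\psi\in[\theta_1,\theta_2]$ is routine once the chain length is fixed.
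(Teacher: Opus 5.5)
There is a genuine gap, and it starts with your reduction. Because $\theta$ and $\psi$ are independent, the inequality $|\mathcal{M}(P^\theta(\gamma))|\ge C\delta^{\nu'}|\mathcal{M}(R^\psi(\gamma_0))|$ is false in general. The set $P^\theta(\gamma)$ is shifted in time by $\theta\, l_t(P(\gamma))\ge \theta\lceil 2^{dp}\rceil^{-1}l_t(R(\gamma_0))$. So for $\theta>\lceil 2^{dp}\rceil(\theta_2+1)$ it lies entirely \emph{after} $R^\psi(\gamma_0)$, whereas weak porosity only transfers hole size from the future to the past. For a concrete counterexample, take $E=\epsilon\Z^n\times[T_0,\infty)$. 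It is $(1/2,\delta,\theta)$-weakly porous for every $\theta>1$, with $\delta=\delta(n,p,d)$ independent of $\epsilon$ and $T_0$; this follows from parabolic scaling plus counting the columns of a suitable dyadic level that avoid $\epsilon\Z^n\times\R$. Choosing $T_0$ between $R^\psi(\gamma_0)$ and $P^\theta(\gamma)$ gives $|\mathcal{M}(R^\psi(\gamma_0))|=|R(\gamma_0)|$ but $|\mathcal{M}(P^\theta(\gamma))|<\epsilon^{n+p}$. The same example refutes the two-sided comparability of $\mathcal{M}(X^\theta)$ and $\mathcal{M}(X^\psi)$ that you consider as a fallback. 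Moreover, the time-lag invariance that is true (Corollary \ref{cor: translation}) is itself deduced from this theorem. The paper avoids the problem by never applying porosity to $P(\gamma)$ itself. Instead it applies porosity to every $Q_0\in\mathcal{D}_m(P(\gamma))$, with $m$ chosen (depending on $\theta,\theta_1$) so that $\theta\, l_t(Q_0)$ is small compared with $(\theta_1-1)l_t(R(\gamma_0))$. The union of the collections $\mathcal{F}^\theta_\delta(Q_0)$ still has measure at least $c|P(\gamma)|$, and every $Q_0^\theta$ now lies well before $R^\psi(\gamma_0)$.

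Even where $P^\theta(\gamma)$ does precede $R^\psi(\gamma_0)$, your chain step is not in place. Knowing only that $X_k$ contains a hole of size $\ge\delta|\mathcal{M}(X_k^\theta)|$, you need $|\mathcal{M}(X_k^\theta)|\gtrsim|\mathcal{M}(X_{k+1})|$. Containment $X_{k+1}\subseteq X_k^\theta$ gives this, but then scales can never increase, and at constant scale the chain cannot move spatially. If only a dyadic piece of $X_{k+1}$ lies in $X_k^\theta$, the large hole of $X_{k+1}$ may lie outside that piece. Your pigeonholing over children needs $c$ close to $1$, which the hypothesis does not give; the improvement of $c$ in the paper (Lemma \ref{lem: wp => polynomial decay}) relies on this theorem. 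The missing idea is to keep all links the same size and use large overlap instead of containment. Set $Q_{i+1}=Q_i^\theta+(\xi_i,\tau_i)$ with $\|\xi_i\|_\infty<\varepsilon_{\max}L_x$ and $0\le\tau_i<\varepsilon_{\max}L_t$, so that $|Q_{i+1}\setminus Q_i^\theta|<\frac{c_0}{2}|Q_{i+1}|$. Since the $\delta$-admissible holes of $Q_{i+1}$ cover $c|Q_{i+1}|$, one of them has at least half its measure in $Q_i^\theta$. Hence $|\mathcal{M}(Q_i^\theta)|\ge 8^{-d(n+p)}\delta|\mathcal{M}(Q_{i+1}^\theta)|$. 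This is also where $p>1$ really enters, rather than in tuning the scales. At scale $L_x=2^{-(m+1)d}l_x(R(\gamma_0))$, a spatial drift of size $l_x(R(\gamma_0))$ costs about $2^{(m+1)d}/\varepsilon_{\max}$ steps, while the time gap accommodates about $2^{(m+1)dp}$ steps. So for $m$ large both requirements are met with a uniformly bounded chain length. The chain ends at some $M\in\mathcal{D}_{m+1}(R^\psi(\gamma_0))$ nested with $\mathcal{M}(R^\psi(\gamma_0))$.
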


\begin{proof}
    Let $\psi \in [\theta_1,\theta_2]$ for $1<\theta_1 \leq \theta_2 < \infty$ and let $0<c_0\leq c$. For parabolic rectangles $R(\gamma_0) \subseteq \Rn$ and $P(\gamma_1) \in \mathcal{D}_1\big(R(\gamma_0)\big)$ with $\gamma_0,\gamma_1 \in[0,1/2]$, let us consider the dyadic rectangles $Q_0(\alpha_1) \in \mathcal{D}_{m}\big(P(\gamma_1)\big)$ with $0\leq \alpha_1 \leq 1/2$ for some $m \in \N$ yet to be chosen. It follows that then $Q_0(\alpha_1) \in \mathcal{D}_{m+1}\big(R(\gamma_0)\big)$. By Corollary \ref{cor: dyadic scale}, the spatial side lengths of $Q_0(\alpha_1)$ are
    \begin{align}
        L_x = l_x\big(Q_0(\alpha_1)\big) = 2^{-(m+1)d}l_x\big(R(\gamma_0)\big), \label{eq: translation Lx}
    \end{align} 
    while the temporal side length $L_t = l_t\big(Q_0(\alpha_1)\big) $ satisfies
    \begin{align}
         2^{-(m+1)dp-1}l_t\big(R(\gamma_0)\big) \leq L_t \leq 2^{-(m+1)dp +1}l_t\big(R(\gamma_0)\big). \label{eq: translation Lt}
    \end{align}

    For each $Q_0(\alpha_1)$ there exists the collection $\mathcal{F}_\delta^\theta\big(Q_0(\alpha_1)\big)$, that is any $S(\beta_1) \in \mathcal{F}_\delta^\theta\big(Q_0(\alpha_1)\big)$ with $0\leq \beta_1 \leq 1/2$ is $E$-free and satisfies
    \begin{align}
        |S(\beta_1)| \geq \delta |\mathcal{M}\big(Q_0^\theta(\alpha_1)\big)|. \label{eq: translation delta admissible}
    \end{align}
    Since $E$ is $(c,\delta, \theta)$-weakly porous, by Proposition \ref{prop: Fdelta} we have
    \begin{align*}
        F\big(Q_0(\alpha_1)\big)=\sum_{S(\beta_1) \in \mathcal{F}_\delta^\theta(Q_0(\alpha_1))} |S(\beta_1)| \geq c |Q_0(\alpha_1)|.
    \end{align*}
    Clearly, $\mathcal{F}_\delta^\theta \big(Q_0(\alpha_1)\big) \subseteq \mathcal{D}\big(P(\gamma_1)\big)$ is a finite collection, so taking the union of these collections over every $Q_0(\alpha_1) \in \mathcal{D}_m(P(\gamma_1)\big)$ results in a finite collection of pairwise disjoint $E$-free dyadic subrectangles of $P(\gamma_1)$. Furthermore, the covering property of the dyadic lattice implies that
    \begin{align*}
        \sum_{Q_0(\alpha_1) \in \mathcal{D}_m(P(\gamma_1))} F\big(Q_0(\alpha_1)\big) &\geq c \sum_{Q_0(\alpha_1) \in \mathcal{D}_m(P(\gamma_1))} |Q_0(\alpha_1)| = c |P(\gamma_1)|.
    \end{align*}
    
    Next, we want to show that the measures of the rectangles $S(\beta_1)$ are large enough compared to the maximal hole of $R^\psi(\gamma_0)$. Since we already have the lower bound (\ref{eq: translation delta admissible}), we estimate $|\mathcal{M}\big(Q_0^\theta(\alpha_1)\big)|$ with $|\mathcal{M}\big(R^\psi(\gamma_0)\big)|$. To do this, we need a chaining argument. Observe first that by the nestedness of the dyadic lattice, there exists some $M(\gamma_2) \in \mathcal{D}_{m+1}\big(R^\psi(\gamma_0)\big)$ with $0\leq \gamma_2 \leq 1/2$ such that 
    \begin{align*}
        \mathcal{M}\big(R^\psi(\gamma_0)\big) \subseteq M(\gamma_2) \quad \text{or} \quad M(\gamma_2) \subseteq \mathcal{M}\big(R^\psi(\gamma_0)\big).
    \end{align*}
    The inclusions will imply respectively that
    \begin{align*}
        |\mathcal{M}\big(M(\gamma_2)\big)| = |\mathcal{M}\big(R^\psi(\gamma_0)\big)| \quad \text{or} \quad |\mathcal{M}\big(M(\gamma_2)\big)| = |M(\gamma_2)| \geq 2^{-(m+1)d(n+p)-1}|R^\psi(\gamma_0)|. 
    \end{align*}
    The second inequality above follows from the comparability of the dyadic lattice. Naturally, we obtain a lower bound
    \begin{align}
        |\mathcal{M}\big(M(\gamma_2)\big)| \geq 2^{-(m+1)d(n+p)-1}|\mathcal{M}\big(R^\psi(\gamma_0)\big)|. \label{eq: translation M}
    \end{align}
    
    The plan is to construct a chain of rectangles from any $Q_0(\alpha_1)$ to $M(\gamma_2)$. Note that since both $Q_0(\alpha_1)$ and $M(\gamma_2)$ are dyadic rectangles of $R(\gamma_0)$ and $R^\psi(\gamma_0)$ of order $m+1$, then $\gamma_2 = \alpha_1$ by similarity of the dyadic layers. This means that we can write
    \begin{align*}
        M(\gamma_2) = Q_0(\alpha_1) + (y,s)
    \end{align*}
    for some $y = (y_1,\dots,y_n) \in \R^n$ and $s \in \R$. In particular, each $y_k$ is bounded by the side length of the whole $R(\gamma_0)$ for every $k = 1,\dots,n$, which means
    \begin{align}
        \lVert y \rVert_\infty \leq l_x\big(R(\gamma_0)\big). \label{eq: translation y boundary}
    \end{align}
    On the other hand, $s$ is bounded by the minimal and maximal temporal distance between $R(\gamma_0)$ and $R^\psi(\gamma_0)$ as
    \begin{align}
        (\psi-1) l_t\big(R(\gamma_0)\big) \leq s \leq (\psi+1)l_t\big(R(\gamma_0)\big). \label{eq: translation s boundary}
    \end{align}

    To construct the chain connecting $Q_0(\alpha_1)$ to $M(\gamma_2)$, we define recursively
    \begin{align*}
        Q_{i+1}(\alpha_1) = Q_i^{\theta}(\alpha_1) + (\xi_{i},\tau_i) =Q_i(\alpha_1) + (\xi_i,\theta L_t + \tau_i),
    \end{align*}
    where $\xi_i = (\xi_{i,1},\dots,\xi_{i,n})\in \R^n$ and $\tau_i \in \R$ for every $i\in \N$. It follows that we can write
    \begin{align*}
        Q_{k}^{\theta}(\alpha_1) &= Q_k(\alpha_1) +(0,\theta L_t)\\
        &= Q_0(\alpha_1) +(0,\theta L_t) + \sum_{i=0}^{k-1} \big( \xi_{i}, \theta L_t +\tau_i \big) \\
        &=Q_0(\alpha_1) + \bigg(\sum_{i=0}^{k-1} \xi_{i}, (k+1)\theta L_t + \sum_{i=0}^{k-1} \tau_i\bigg).
    \end{align*}
    Notice that each rectangle of the chain is always translated upwards some constant amount that depends on $\theta$. However, to correct any spatial and temporal misalignment, we have also introduced an extra correction terms $(\xi_i,\tau_i)$. We set $N_1 \in \N$ such that $M(\gamma_2) = Q_{N_1}^{\theta}(\alpha_1)$. By the above it is enough to require that the cumulative spatial and temporal corrections matches
    \begin{align}
        y = \sum_{i=0}^{N_1-1} \xi_{i}, \quad \text{and} \quad s = (N_1+1)\theta L_t + \sum_{i=0}^{N_1-1} \tau_i. \label{eq: translation correction sum}
    \end{align}
    
    It is important that each correction term $(\xi_i,\tau_i)$ is small enough to be able to use parabolic weak porosity to link the rectangles of the chain. To achieve this, we restrict the spatial and temporal corrections for each $i = 1,\dots,N_1$ by
    \begin{align}
        \lVert\xi_{i}\rVert_\infty< \varepsilon_{\textnormal{max}} L_x \quad \text{and} \quad 0 \leq \tau_i < \varepsilon_{\textnormal{max}} L_t. \label{eq: translation correction boundary}
    \end{align}
    where $\varepsilon_{\textnormal{max}} >0$. By selecting the maximal proportional correction as
    \begin{align*}
        \varepsilon_{\textnormal{max}} = 1 - (1-c_0/2)^\frac{1}{n+1} \leq 1,
    \end{align*}
    we guarantee that the rectangles of the chains overlap enough for the linking. In particular,
    \begin{align}
        \begin{split}
            |Q_{i+1}(\alpha_1) \setminus  Q_i^\theta(\alpha_1)| &= |Q_{i+1}(\alpha_1)|-(L_t-\tau_i)\prod_{k=1}^{n}\big(L_x-|\xi_{i,k}|\big) \\
            &< |Q_{i+1}(\alpha_1)|-(1-\varepsilon_{\textnormal{max}})^{n+1}L_t L_x^n \\
            &= |Q_{i+1}(\alpha_1)| - \Big(1- \frac{c_0}{2}\Big)|Q_{i+1}(\alpha_1)|  \\ 
            &= \frac{c_0}{2}|Q_{i+1}(\alpha_1)|.
        \end{split} \label{eq: translation Q intersection}
    \end{align}
    
    We then prove the linkage between $Q_i^\theta(\alpha_1)$ and $Q_{i+1}^\theta(\alpha_1)$. For any fixed $i\in \N$, let us consider the collection $\mathcal{F}_i = \mathcal{F}_\delta^\theta\big(Q_{i+1}(\alpha_1)\big)$. Thus, for $T(\alpha_2) \in \mathcal{F}_i$ we have
    \begin{align*}
        |T(\alpha_2)| \geq \delta |\mathcal{M}\big(Q_{i+1}^\theta(\alpha_1)\big)|.
    \end{align*}
    Assume that for every $T(\alpha_2) \in \mathcal{F}_i$ we have
    \begin{align}
        |T(\alpha_2) \setminus  Q_i^\theta(\alpha_1)| \geq \frac{1}{2}|T(\alpha_2)|. \label{eq: translation measure density}
    \end{align}
    However, since $E$ is $(c,\delta,\theta)$-weakly porous, Proposition \ref{prop: Fdelta} implies
    \begin{align*}
       \frac{c_0}{2}|Q_{i+1}(\alpha_1)| &\leq \frac{c}{2}|Q_{i+1}(\alpha_1)| \leq \sum_{T(\alpha_2) \in \mathcal{F}_i}\frac{1}{2}|T(\alpha_2)| \\
       &\leq \sum_{T(\alpha_2) \in \mathcal{F}_i} |T(\alpha_2) \setminus  Q_i^\theta(\alpha_1)|  \\
       &\leq |Q_{i+1}(\alpha_1) \setminus  Q_i^\theta(\alpha_1)|,
    \end{align*}
    which is a contradiction to (\ref{eq: translation Q intersection}). This means that the complement of (\ref{eq: translation measure density}) is true, and there exists $\delta$-admissible $\Tilde{T}(\alpha_2) \in \mathcal{F}_i$ such that 
    \begin{align*}
        |\Tilde{T}(\alpha_2) \cap  Q_i^\theta(\alpha_1)| \geq \frac{1}{2}|\Tilde{T}(\alpha_2)|.
    \end{align*}
    
    Denote $A_i = \Tilde{T}(\alpha_2) \cap  Q_i^\theta(\alpha_1)$. Since $A_i$ is an intersection of rectangular sets, $A_i$ is also a rectangular set. The measure condition implies then that necessarily the spatial and temporal side lengths of $A_i$ must be at least half of the side lengths of $\Tilde{T}(\alpha_2)$. Thus, there exists a parabolic rectangle $U(\alpha_2) \subseteq \Tilde{T}(\alpha_2) \cap  Q_i^{\theta}(\alpha_1)$ with side lengths $2^{-1} l_x\big(\Tilde{T}(\alpha_2)\big)$ and $2^{-p} l_t\big(\Tilde{T}(\alpha_2)\big)$. Consequently, $U(\alpha_2)$ is $E$-free and
    \begin{align*}
        |U(\alpha_2)| =  2^{-(n+p)} |\Tilde{T}(\alpha_2)| \geq 2^{-d(n+p)}\delta |\mathcal{M}\big(Q_{i+1}^{\theta}(\alpha_1)\big)|.
    \end{align*}
    By the approximation property of the dyadic lattice there also exists $V(\beta_2) \in \mathcal{D}\big(Q_i^{\theta}(\alpha_1)\big)$ with $0\leq \beta_2 \leq 1/2$ such that $V(\beta_2) \subseteq U(\alpha_2)$ and
    \begin{align*}
        |V(\beta_2)| \geq 4^{-d(n+p)} |U(\alpha_2)|.
    \end{align*}
    Since then $V(\beta_2) \subseteq \Tilde{T}(\alpha_2) \in \mathcal{F}_i$ is $E$-free, combining the estimates above, we have our crucial chaining result
    \begin{align}
        |\mathcal{M}\big(Q_i^\theta(\alpha_1)\big)| \geq |V(\beta_2)| \geq 4^{-d(n+p)} |U(\alpha_2)| \geq 8^{-d(n+p)} \delta |\mathcal{M}\big(Q_{i+1}^\theta(\alpha_1)\big)|.\label{eq: translation link}
    \end{align}
    
    Next, we define the correction terms $(\xi_i,\tau_i)$. For some auxiliary parameter $1 \leq N_2 \leq N_1$, let 
    \begin{align*}
        \xi_{i,k} &= 
        \begin{cases}
            \frac{y_k}{N_2}, \quad i = 0,\dots, N_2-1,\\
            0, \quad  i = N_2,\dots, N_1 -1
        \end{cases}
    \end{align*}
    for every $k = 1,\dots, n$, and
    \begin{align*}
        \tau_i = \begin{cases}
            \frac{s}{N_2} - \frac{N_1+1}{N_2}\theta L_t, \quad i = 0,\dots, N_2-1,\\
            0, \quad  i = N_2,\dots, N_1 -1.
        \end{cases}
    \end{align*}
    It is straightforward to check that the choice of $\xi_i$ and $\tau_i$ satisfies  (\ref{eq: translation correction sum}), meaning $Q_{N_1}^\theta(\alpha_1) = M(\gamma_2)$. However, we must still show that there exist $N_1$ and $N_2$ such that each pair of $\xi_i$ and $\tau_i$ satisfies (\ref{eq: translation correction boundary}).

     We first check (\ref{eq: translation correction boundary}) for $\tau_i$ given any $i = 1,\dots, N_1-1$. Obviously, if $i = N_2,\dots, N_1-1$, there is nothing to show. Hence, we let $i = 1,\dots, N_2-1$. Observe that
     \begin{align*}
        0 \leq \tau_i<\varepsilon_{\textnormal{max}}L_t
    \end{align*}
    is then equivalent with
    \begin{align*}
          (N_1 +1)\theta L_t \leq s < (N_1 +1)\theta L_t +N_2 \varepsilon_{\textnormal{max}} L_t.
    \end{align*}
    If we require $N_2$ so large such that
    \begin{align}
            N_2\varepsilon_{\text{max}} \geq \theta, \label{eq: translation N2}
    \end{align}
    we get a sufficient condition
    \begin{align}
         (N_1+1) \theta L_t \leq s < (N_1+2) \theta L_t. \label{eq: translation t suf}
    \end{align}
    We will choose $N_2$ satisfying (\ref{eq: translation N2}) later.
    
    By (\ref{eq: translation s boundary}) we have $s < \infty$ and a lower bound
    \begin{align*}
        s \geq (\psi-1) l_t\big(R(\gamma_0)\big) \geq (\theta_1-1) l_t\big(R(\gamma_0)\big),
    \end{align*}
    since $\psi \geq \theta_1$. Thus, there clearly exists some $N_1 \geq N_2$ satisfying (\ref{eq: translation t suf}) if 
    \begin{align*}
        (N_2 +1) \theta L_t \leq (\theta_1-1) l_t\big(R(\gamma_0)\big).
    \end{align*}
    Recalling (\ref{eq: translation Lt}), we can rearrange the terms to get another sufficient condition
    \begin{align*}
        2^{(m+1)dp} &\geq 2 \cdot \frac{(N_2+1)\theta}{\theta_1-1}.
    \end{align*}
    Since the parameter $m$ is free, the inequality above can be satisfied for any $N_2 \geq 1$ by setting $m$ to depend on $N_2$ This is important since by (\ref{eq: translation N2}) we require $N_2$ to be large enough. In other words, we choose 
    \begin{align}
        m  = \bigg\lceil \frac{1}{dp} \log_2\bigg( 2\cdot \frac{(N_2+1)\theta}{\theta_1-1}\bigg)\bigg\rceil -1. \label{eq: translation m}
    \end{align}
    
    We then check (\ref{eq: translation correction boundary}) for $\xi_i$, that is,
    \begin{align}
        \lVert\xi_{i}\rVert_\infty< \varepsilon_{\textnormal{max}} L_x. \label{eq: translation x suf}
    \end{align}
    Here again, we may let $i = 1,\dots, N_2-1$, since else $\xi_i = 0$. Now, by (\ref{eq: translation y boundary}) we have
    \begin{align*}
        |\xi_{i,k}| &= \Big|\frac{y_k}{N_2}\Big| \leq \frac{l_x\big(R(\gamma_0)\big)}{N_2}
    \end{align*}
    for any $k = 1,\dots,n$. Thus, (\ref{eq: translation x suf}) is satisfied if
    \begin{align*}
        \frac{l_x\big(R(\gamma_0)\big)}{N_2} < \varepsilon_{\textnormal{max}}L_x.
    \end{align*}
    Using (\ref{eq: translation Lx}), we may write the above equivalently in terms of the parameter $m$ as
    \begin{align*}
        N_2 > 2^{(m+1)d}\varepsilon_{\textnormal{max}}^{-1}.
    \end{align*}
    The choice of $m$ in (\ref{eq: translation m}) implies
    \begin{align}
        2^{(m+1)d}\varepsilon_{\textnormal{max}}^{-1} \leq 2^d\varepsilon_{\textnormal{max}}^{-1}\bigg(2\cdot \frac{(N_2+1)\theta }{\theta_1-1}\bigg)^\frac{1}{p} \leq C_1 N_2^\frac{1}{p}, \label{eq: translation N to Np}
    \end{align}
    where $C_1 = 2^d \varepsilon_{\textnormal{max}}^{-1}\big(4 \theta(\theta_1-1)^{-1}\big)^\frac{1}{p}$. Thus, we have one more sufficient condition for (\ref{eq: translation x suf}) as
    \begin{align*}
        C_1 N_2^\frac{1}{p} < N_2.
    \end{align*}
    Hence, to satisfy both (\ref{eq: translation N2}) and the above, we set
    \begin{align*}
        N_2 = \max \bigg\{ \Big\lceil C_1^\frac{p}{p-1}\Big\rceil +1,\big\lceil \theta \varepsilon_{\textnormal{max}}^{-1} \big\rceil \bigg\},
    \end{align*}
    which shows the existence of $N_1$ and $N_2$ such that (\ref{eq: translation correction boundary}) is satisfied.

    Before finishing the proof, we find $N_3 \in \N$ as an upper bound for $N_1 \leq N_3$. By (\ref{eq: translation s boundary}) we have the upper bound
    \begin{align*}
        s \leq (\psi +1)l_t\big(R(\gamma_0)\big) \leq (\theta_2 +1)l_t\big(R(\gamma_0)\big),
    \end{align*}
    where we used the fact $\psi \leq \theta_2$. If we set $N_3$ to some number such that
    \begin{align*}
        (\theta_2 +1)l_t\big(R(\gamma_0)\big) < (N_3 +2)\theta L_t,
    \end{align*}
    and since $N_1$ satisfies (\ref{eq: translation t suf}), then $N_1 \leq N_3$. Applying (\ref{eq: translation Lt}), it is enough to require 
    \begin{align*}
        N_3 > 2^{(m+1)dp+1}\frac{\theta_2+1}{\theta} -2.
    \end{align*}
    We choose $N_3$ to be smallest integer that achieves this, while being larger than $N_2$. We set
    \begin{align*}
        N_3 = \max \Bigg\{\bigg \lceil 2^{(m+1)dp+1}\frac{\theta_2+1}{\theta} \bigg\rceil -1, N_2 +1 \Bigg\}.
    \end{align*}
    Finally, we use a substitution $N_3 = \nu -1\in \R$ to make our final inequality in the same form as in the theorem.
    
    We have constructed the chain such that $Q_{N_1}^\theta(\alpha_1) = M(\gamma_2)$. Recall the rectangles $S_j(\beta_1)$ satisfying (\ref{eq: translation delta admissible}). We recursively apply (\ref{eq: translation link}), and also remember (\ref{eq: translation M}), to obtain
    \begin{align*}
        |S_j(\beta_j)| &\geq \delta |\mathcal{M}\big(Q_0^\theta(\alpha_1)\big)| \geq 8^{-d(n+p)N_1}\delta^{N_1+1}|\mathcal{M}\big(M(\gamma_2)\big)|\\
        &\geq  2^{-(m+1)d(n+p)-1}\cdot 8^{-d(n+p)\nu}\delta^{\nu} |\mathcal{M}\big(R^\psi(\gamma_0)\big)| \\
        &= C\delta^{\nu} |\mathcal{M}\big(R^\psi(\gamma_0)\big)|,
    \end{align*}
    finishing the proof by setting $\sigma = C \delta^\nu$.
\end{proof}

\begin{remark}
    Theorem \ref{theo: doubling porosity} could be modified to a more general form. Instead, we could take $P(\gamma_1) \in \mathcal{D}_k(R(\gamma_0))$ for any $k \in \N$. However, this level of generality is not needed in this paper.
\end{remark}

A natural corollary of Theorem \ref{theo: doubling porosity} is the time-lag invariance of parabolic weak porosity. We show this result next.

\begin{corollary} \label{cor: translation}
    Suppose $E \subseteq \Rn$ is $(c, \delta,\theta)$-weakly porous for some $c,\delta \in (0,1)$ and $\theta >1$. Then, $E$ is $(c, \sigma, \psi)$-weakly porous for any $\psi >1$, where $\sigma = \sigma(n,p,d,c,\delta,\theta,\psi)$ is from Theorem \ref{theo: doubling porosity}.
\end{corollary}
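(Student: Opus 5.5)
The plan is to deduce the corollary directly from Theorem \ref{theo: doubling porosity}, applied once for each first-generation child and then summed over all children.

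Fix $\psi>1$ and let $\sigma=\sigma(n,p,d,c,\delta,\theta,\psi)$ be the constant given by Theorem \ref{theo: doubling porosity}. For concreteness, take $\theta_1=\theta_2=\psi$ and $c_0=c$ there. Let $R(\gamma_0)\subseteq\Rn$ be an arbitrary parabolic rectangle with $0\le\gamma_0\le 1/2$. We must produce pairwise disjoint $E$-free rectangles in $\mathcal{D}\big(R(\gamma_0)\big)$ that satisfy two conditions: each has measure at least $\sigma|\mathcal{M}(R^\psi(\gamma_0))|$, and their total measure is at least $c|R(\gamma_0)|$.

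For each child $P(\gamma)\in\mathcal{D}_1\big(R(\gamma_0)\big)$, Theorem \ref{theo: doubling porosity} supplies finitely many pairwise disjoint $E$-free $S_i^{P}(\alpha_i)\in\mathcal{D}\big(P(\gamma)\big)$ with two properties. First, $|S_i^P(\alpha_i)|\ge\sigma|\mathcal{M}(R^\psi(\gamma_0))|$. Second, $\sum_i|S_i^P(\alpha_i)|\ge c|P(\gamma)|$.

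We then take the union of these families over all $P(\gamma)\in\mathcal{D}_1\big(R(\gamma_0)\big)$. Each $S_i^P$ lies in $\mathcal{D}(P(\gamma))$. By construction of the lattice, $\mathcal{D}\big(P(\gamma)\big)\subseteq\mathcal{D}\big(R(\gamma_0)\big)$, since $\mathcal{D}_{k+1}(R(\gamma_0))$ is the union of the $\mathcal{D}_k$ of its children. So every rectangle in the union belongs to $\mathcal{D}\big(R(\gamma_0)\big)$.

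Disjointness has two parts. Rectangles coming from the same child are disjoint by the theorem. Rectangles coming from different children are disjoint because distinct children in $\mathcal{D}_1(R(\gamma_0))$ are disjoint, by nestedness and covering. The union is also finite, since there are finitely many children.

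By the covering property, the children have total measure $|R(\gamma_0)|$. Summing the second property over the children therefore gives
\[
\sum_{P}\sum_i|S_i^P(\alpha_i)|\ge c\sum_{P(\gamma)\in\mathcal{D}_1(R(\gamma_0))}|P(\gamma)|=c|R(\gamma_0)|.
\]
Together with the measure lower bound, this is exactly Definition \ref{def: pwp general} with parameters $(c,\sigma,\psi)$.

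There is essentially no obstacle here; all the chaining work is contained in Theorem \ref{theo: doubling porosity}. The only points to check are two bookkeeping items. One is that the theorem's uniformity really holds: $\sigma$ must not depend on $R(\gamma_0)$ or $P(\gamma)$, which the statement guarantees. The other is the inclusion $\mathcal{D}(P(\gamma))\subseteq\mathcal{D}(R(\gamma_0))$, which follows from the recursive definition of the lattice and the similarity property.

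If one prefers to avoid that inclusion, there is an alternative route. By Proposition \ref{prop: Fdelta}, each $S_i^P$ is contained in a maximal $E$-free rectangle of $\mathcal{F}\big(R(\gamma_0)\big)$, and that rectangle is still $\sigma$-admissible with respect to $\theta=\psi$. This gives the same conclusion.
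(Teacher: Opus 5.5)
Your proof is correct and is essentially the paper's argument: apply Theorem \ref{theo: doubling porosity} to each child $P(\gamma)\in\mathcal{D}_1\big(R(\gamma_0)\big)$, pool the resulting rectangles (which lie in $\mathcal{D}\big(R(\gamma_0)\big)$ and are pairwise disjoint), and sum using the covering property. One small correction: your closing ``alternative route'' does not actually avoid the inclusion $\mathcal{D}\big(P(\gamma)\big)\subseteq\mathcal{D}\big(R(\gamma_0)\big)$, because placing $S_i^P$ inside a member of $\mathcal{F}\big(R(\gamma_0)\big)$ already relies on nestedness in that lattice; your main argument does not need that route, however.
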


\begin{proof}
    Let $\psi >1$, and choose a parabolic rectangle $R(\gamma_0) \subseteq \Rn$ with $0\leq \gamma_0 \leq 1/2$. Since $E$ is $(c,\delta,\theta)$-weakly porous, by Theorem \ref{theo: doubling porosity} for every $P(\gamma) \in \mathcal{D}_1\big(R(\gamma_0)\big)$ with $0\leq \gamma \leq 1/2$ there exists pairwise disjoint $E$-free $S_i(\alpha_i) \in \mathcal{D}\big(P(\gamma)\big)$ with $0\leq \alpha_i \leq 1/2$ for $i=1,\dots,N$ such that 
    \begin{align*}
        |S_i(\alpha_i)| \geq \sigma |\mathcal{M}\big(R^\psi(\gamma_0)\big)|
    \end{align*}
    for some $\sigma = \sigma(n,p,d,c,\delta,\theta,\psi) \in (0,1)$. Moreover,
    \begin{align*}
        F\big(P(\gamma)\big) = \sum_{i=1}^N|S_i(\alpha_i)| \geq c |P(\gamma)|.
    \end{align*}
    
    Clearly, $\big(S_i(\alpha_i)\big)_{i=1}^N \subseteq \mathcal{D}\big(R(\gamma_0)\big)$ is a finite collection, so taking the union of these collections over every $P(\gamma) \in \mathcal{D}_1(R(\gamma_0))$ results in a finite collection of $\sigma$-admissible dyadic subrectangles of $R(\gamma_0)$. On the other hand, summing over every $P(\gamma) \in \mathcal{D}_1\big(R(\gamma_0)\big)$ yields
    \begin{align*}
        \sum_{P(\gamma) \in \mathcal{D}_1(R(\gamma_0))}F\big(P(\gamma)) &\geq \sum_{P(\gamma) \in \mathcal{D}_1(R(\gamma_0))} c|P(\gamma)| \\
        &= c|R(\gamma_0)|, 
    \end{align*}
    showing that $E$ is $(c,\sigma,\psi)$-weakly porous.
\end{proof}

We have another corollary of Theorem \ref{theo: doubling porosity}, which is the forward-in-time doubling of maximal hole. The doubling property allows us to compare the maximal hole of the dyadic child rectangles to the maximal hole of the forward-in-time parent. For results in Section \ref{sec: stopping conditions}, we require a certain extension of the doubling property to any order forward in time parent and any translation.

\begin{corollary} \label{cor: mh doubling}
    Suppose $E\subseteq\mathbb{R}^{n+1}$ is $(c,\delta,\theta)$-weakly porous for some $c,\delta \in (0,1)$ and $\theta >1$. If $\psi \in [\theta_1-\theta_0,\theta_2-\theta_0]$ for some integer $\theta_0 \geq 2$ and $1<\theta_1\leq \theta_2 <\infty$, then for every pair of parabolic rectangles $R(\gamma_0) \subseteq \Rn$ and $P(\gamma) \in \mathcal{D}_m^{\textnormal{ext}}\big(R(\gamma_0)\big)$  with $\gamma_0,\gamma \in [0,1/2]$ and $m\geq 1$, the maximal hole function satisfies
    \[
    \lvert \mathcal{M}\big(P(\gamma)\big) \rvert \geq \sigma^j \lvert  \mathcal{M}\big(\pi_j^+P^\psi (\gamma)\big) \rvert,
    \]
    for every $j=1,\dots,m$, where $\sigma = \sigma(n,p,d,c,\delta,\theta,\theta_0,\theta_1,\theta_2) \in (0,1)$.
\end{corollary}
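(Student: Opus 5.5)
The plan is to prove the case $j=1$ directly from Theorem \ref{theo: doubling porosity} and then get general $j$ by induction along the chain of forward-in-time parents.

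\textbf{Case $j=1$.} Fix $P(\gamma)\in\mathcal{D}_m^{\textnormal{ext}}\big(R(\gamma_0)\big)$ and set $Q(\alpha)=\pi P(\gamma)$. By the definition of the parent, $P(\gamma)\in\mathcal{D}_1\big(Q(\alpha)\big)$. By the conventions for the forward-in-time parent,
\[
\pi_1^+P^\psi(\gamma)=\pi P^{\theta_0+\psi}(\gamma)=Q^{\theta_0+\psi}(\alpha),
\]
and $\theta_0+\psi\in[\theta_1,\theta_2]$. We apply Theorem \ref{theo: doubling porosity} to the pair $Q(\alpha)$, $P(\gamma)$ with translation $\theta_0+\psi$. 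This gives pairwise disjoint $E$-free $S_i\in\mathcal{D}\big(P(\gamma)\big)$ with
\[
\sum_i|S_i|\geq c|P(\gamma)|>0 \quad\text{and}\quad |S_i|\geq \sigma_1\big|\mathcal{M}\big(Q^{\theta_0+\psi}(\alpha)\big)\big|.
\]
Here $\sigma_1=C\delta^\nu$, and since $c$ is fixed (take $c_0=c$), $C,\nu$ depend only on $n,p,d,c,\theta,\theta_1,\theta_2$. Because the collection has positive total measure, it is nonempty. Each $S_i$ is $E$-free and dyadic in $P(\gamma)$, so it lies inside some element of $\mathcal{F}\big(P(\gamma)\big)$. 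Hence
\[
\big|\mathcal{M}\big(P(\gamma)\big)\big|\geq |S_i|\geq\sigma_1\big|\mathcal{M}\big(\pi_1^+P^\psi(\gamma)\big)\big|.
\]
Strictly, $\mathcal{M}$ maximizes $l_x$ rather than measure. But all elements of $\mathcal{F}$ at a given dyadic level have comparable measure (truncation parameters lie in $[0,1/2]$), so this costs at most a factor $2$ or $4$, which we absorb into $\sigma$.

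\textbf{General $j$ by induction.} Write $P_k=\pi_k^+P(\gamma)$, which is the unshifted rectangle in $\mathcal{D}^{\textnormal{ext}}$. Then $\pi_j^+P^\psi(\gamma)=P_{j-1}$ shifted by its forward parent with translation $\psi$, that is, $\pi^+\big(P_{j-1}\big)^{\psi}$. Applying the $j=1$ case to $P_{j-1}$, which lies in $\mathcal{D}_{m-j+1}^{\textnormal{ext}}$ with $m-j+1\geq1$, gives
\[
|\mathcal{M}(P_{j-1})|\geq\sigma|\mathcal{M}(\pi_j^+P^\psi(\gamma))|.
\]
To chain down to $P(\gamma)$ we need, for $k<j-1$,
\[
|\mathcal{M}(P_k)|\geq\sigma|\mathcal{M}(P_{k+1})|,
\]
which is the $j=1$ case with $\psi=0$. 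However, $0$ is not in $[\theta_1-\theta_0,\theta_2-\theta_0]$ in general. The fix is to enlarge the interval at the start: apply Theorem \ref{theo: doubling porosity} with the window $[\min\{\theta_1,\theta_0\},\max\{\theta_2,\theta_0\}]$. This is admissible since $\theta_0\geq2>1$, and the constant then depends additionally on $\theta_0$, as the statement allows. The step $P_k\to P_{k+1}$ uses translation $\theta_0$ relative to the parent $\pi P_k$, since $P_{k+1}=\pi P_k^{\theta_0}=(\pi P_k)^{\theta_0}$. Multiplying the $j$ one-step inequalities gives the claim with $\sigma^j$.

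\textbf{Main obstacle.} The main care point is the bookkeeping of translations. One must check that "parent first, then shift" agrees with $\pi_j^+P^\psi(\gamma)=\pi_jP^{\theta_0+\psi}$ composed iteratively, so that each step is exactly an instance of Theorem \ref{theo: doubling porosity} with translation parameter in a fixed compact subinterval of $(1,\infty)$. A second care point is that $\mathcal{D}^{\textnormal{ext}}$ rectangles are handled by viewing $\pi P_k$ as a genuine rectangle $R'(\gamma')$ with $\gamma'\in[0,1/2]$, to which the theorem applies directly. The small measure-versus-side-length issue in $\mathcal{M}$ is routine.
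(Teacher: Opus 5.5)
Your proposal is correct and follows essentially the same argument as the paper. You use one application of Theorem~\ref{theo: doubling porosity} with translation $\theta_0+\psi\in[\theta_1,\theta_2]$ for the final link from $\pi_{j-1}^+P(\gamma)$ to $\pi_j^+P^\psi(\gamma)$, and applications with translation $\theta_0$ for each link from $\pi_i^+P(\gamma)$ to $\pi_{i+1}^+P(\gamma)$, then multiply. The only cosmetic difference is that the paper takes $\sigma=\min\{\sigma_1,\sigma_2\}$ from the two kinds of steps, whereas you use the single window $[\min\{\theta_1,\theta_0\},\max\{\theta_2,\theta_0\}]$, which amounts to the same thing.
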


\begin{remark}
    It is important later that we allow $\psi \in [\theta_1-\theta_0,\theta_2-\theta_0]$, where the range can include negative values. The doubling property requires nonzero time-lag between initial parent and the translated parent, which Corollary \ref{cor: mh doubling} still preserves since the forward-in-time parent operator includes the translation $\theta_0$, see (\ref{eq: theta0}).
\end{remark}

\begin{proof}
    Suppose $\psi \in [\theta_1-\theta_0,\theta_2-\theta_0]$ for some $\theta_0 = 2,3,\dots$ and $1<\theta_1 \leq \theta_2<\infty$. Let $R(\gamma_0) \subseteq \Rn$ be a parabolic rectangle with with $0\leq \gamma_0 \leq 1/2$ and $P(\gamma) \in \mathcal{D}_m^{\textnormal{ext}}\big(R(\gamma_0)\big)$ with $0\leq \gamma \leq 1/2$ for some $m\geq 1$. Fix $j =1,\dots, m$ and define  $Q(\alpha_{j-1}) = \pi_{j-1}^+P(\gamma)$. Observe that
    \begin{align*}
        \pi_{j}^+ P^\psi(\gamma) &= \pi_{j}^+P(\gamma) + \big(0,\psi l_t\big(\pi_{j}P(\gamma)\big)\big) \\
        &= \pi^+Q(\alpha_{j-1}) + \big(0,\psi l_t\big(\pi Q(\alpha_{j-1})\big)\big) \\
        &= \pi Q^{\theta_0 +\psi}(\alpha_{j-1}) =  \pi Q^{\psi_1}(\alpha_{j-1}),
    \end{align*}
    where $\psi_1 = \theta_0 +\psi$. Since $\psi \in [\theta_1-\theta_0,\theta_2-\theta_0]$, then $\psi_1 \in [\theta_1,\theta_2]$. 
    
    We then use the fact that $E$ is $(c,\delta,\theta)$-weakly porous to apply Theorem \ref{theo: doubling porosity}. Now, for $\psi_1 \in [\theta_1,\theta_2]$ there exists some $\sigma_1 = \sigma_1(n,p,c,\delta,\theta,\theta_1,\theta_2) \in (0,1)$ and at least one $E$-free $S(\beta_{j-1}) \in \mathcal{D}\big(Q(\alpha_{j-1})\big) \subseteq \mathcal{D}\big(\pi Q(\alpha_{j-1})\big)$ with $0\leq \beta_{j-1} \leq 1/2$ such that
    \begin{align*}
        |S(\beta_{j-1})| \geq \sigma_1|\mathcal{M}\big(\pi Q^{\psi_1}(\alpha_{j-1})\big)|.
    \end{align*}
    It follows that
    \begin{align}
        \begin{split}
            |\mathcal{M}\big(\pi_{j-1}^+P(\gamma)\big)| &= |\mathcal{M}\big(Q(\alpha_{j-1})\big)| \geq |S(\beta_{j-1})| \\
            &\geq \sigma_1 |\mathcal{M}\big(\pi Q^{\psi_1}(\alpha_{j-1})\big)| \\
            &= \sigma_1 |\mathcal{M}\big(\pi_j^+P^{\psi}(\gamma)\big)|.
        \end{split}\label{eq: mh doubling 1}
    \end{align}

    On the other hand, for any $i =0,\dots, j-2$ we define $U_i(\alpha_i) = \pi_{i}^+P(\gamma)$ with $0\leq \alpha_i\leq 1/2$. Hence we have
    \begin{align*}
        \pi_{i+1}^+ P(\gamma) &= \pi\big(\pi_{i}^+P(\gamma)\big) +\big(0,\theta_0 l_t\big(\pi_{i+1}P(\gamma)\big)\big) \\
        &= \pi U_i(\alpha_i) +\big(0,\theta_0 l_t\big(\pi U_i(\alpha_i)\big)\big) =\pi U_i^{\theta_0}(\alpha_i).
    \end{align*}
    We apply Theorem \ref{theo: doubling porosity} again for each $i = 0,\dots j-2$ to find a pairwise disjoint $E$-free $V(\beta_i) \in \mathcal{D}\big(U_i(\alpha_i)\big) \subseteq \mathcal{D}\big(\pi U_i(\alpha_i)\big)$ with $0\leq \beta_i \leq 1/2$ such that
    \begin{align*}
        |V(\beta_i)| \geq \sigma_2|\mathcal{M}\big(\pi U_i^{\theta_0}(\alpha_i)\big)|,
    \end{align*}
    where $\sigma_2 = \delta_2(n,p,c,\delta,\theta,\theta_0) \in (0,1)$. It follows that
    \begin{align*}
        |\mathcal{M}\big(\pi_i^+P(\gamma)\big)|
        &= |\mathcal{M}\big(U_i(\alpha_i)\big)| \geq |V(\beta_i)| \geq \sigma_2 |\mathcal{M}\big(\pi U_i^{\theta_0}(\alpha_i)\big)| \\
        &=\sigma_2 |\mathcal{M}\big(\pi^+ U_i (\alpha_i)\big)| = \sigma_2 |\mathcal{M}\big(\pi_{i+1}^+P(\gamma)\big)|.
    \end{align*}
    We iterate the estimate above and combine it with (\ref{eq: mh doubling 1}) to obtain
    \begin{align*}
        |\mathcal{M}\big(P(\gamma)\big)| &= |\mathcal{M}\big(\pi_0^+P(\gamma)\big)| \geq \sigma_2^{j-1}|\mathcal{M}\big(\pi_{j-1}^+P(\gamma)\big)| \\
        &\geq  \sigma_2^{j-1}\sigma_1 |\mathcal{M}\big(\pi_j^+ P^{\psi}(\gamma)\big)|.
    \end{align*}
    We finish the proof by setting $\sigma = \min \big\{\sigma_1,\sigma_2\big \}$, which results in the desired
    \begin{align*}
        |\mathcal{M}\big(P(\gamma)\big)| \geq \sigma^{j} |\mathcal{M}\big(\pi_j^+ P^{\psi}(\gamma)\big)|.
    \end{align*}
\end{proof}

\section{Stopping time construction} \label{sec: stopping conditions}

To prove the $\alpha$-improvement for any parabolic weakly porous set, we need various intermediate results. Our planned approach is to formulate an appropriate stopping time construction using the forward-in-time parent operator $\pi^+$. The main lemma of this section is a certain exponential estimate, however, the other results are also important later.

\subsection{Stopping time of forward-in-time parents}
Given a nonempty closed set $E \subseteq \Rn$, a parabolic rectangle $R(\gamma_0) \subseteq \Rn$ with $0\leq \gamma_0 \leq 1/2$ and an integer $m\geq 1$, then for any subrectangle $P(\gamma) \in \mathcal{D}_m^{\text{ext}}(R(\gamma_0))$ with $0\leq \gamma \leq 1/2$ we define a stopping time
\begin{align}
    \tau_\Lambda\big(P(\gamma)\big) = \min\bigg\{k =1,\dots,m \,:\, \max_{\theta\in [\phi-\theta_0,\Phi-\theta_0]} |\mathcal{M}\big(\pi_k^+P^\theta(\gamma)\big)| \geq \Lambda \bigg\}, \label{eq: termination number}
\end{align}
where $\theta_0,\phi,\Phi \in \N$ are fixed parameters such that $2 \leq \phi \leq \theta_0 \leq \Phi <\infty$ and $\Lambda >0$. Recall also that the integer $\theta_0 \geq 2$ is the default translation for the forward-in-time parent operator $\pi^+$, see (\ref{eq: theta0}). The stopping time $\tau_\Lambda$ measures how many times we have to apply our doubling results, Theorem \ref{theo: doubling porosity} and Corollary \ref{cor: mh doubling}, on a chain $\big(\pi_i^+P(\gamma)\big)_i$, such that the forward-in-time parent of $P(\gamma)$ finds a large enough $E$-free region.

Unfortunately there is no obvious or natural choice for the parameters $\theta_0,\phi$ and $\Phi$. Essentially, the parameters are required to satisfy the following five specific conditions that depend on the product $dp$. The restrictions arise from the proofs of this section, and are heavily connected to Lemma \ref{lem: bounded distance}. In this section and Section \ref{sec: full charac} we shall consider the parameters $\theta_0,\phi$ and $\Phi$ fixed such that they satisfy the following.

\begin{lemma} \label{lem: parameters}
    Let $p\geq 1$ and define integer $d = d(p)$ such that it satisfies (\ref{eq: division rate}). Then, there exist integers  $\theta_0,\phi,\Phi \geq 2$ satisfying the following:
    \begin{enumerate}[label=(\roman*)]
        \item \(\phi \leq \Phi - \theta_0 - \big\lceil\frac{2\theta_0}{2^{dp}-1}\big\rceil\). \label{item: parameters 1}
        \item \(\phi \leq \theta_0 \leq \Phi\). \label{item: parameters 2}
        \item \(\Phi\geq  \big\lceil2^{dp}\big\rceil -1 \). \label{item: parameters 3}
        \item \(\phi\leq  \big\lfloor2^{dp}\big\rfloor -1 - \theta_0 - \big\lceil\frac{2\theta_0}{2^{dp}-1}\big\rceil\).\label{item: parameters 4}
        \item \(\phi - \theta_0 \leq - \big\lceil\frac{4\theta_0}{2^{dp}-1}\big\rceil\).\label{item: parameters 5}
    \end{enumerate}
\end{lemma}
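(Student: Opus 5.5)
The lemma only asks for \emph{existence} of suitable integers, so I would exhibit explicit values. The plan is to fix $\theta_0$ and $\phi$ as small absolute constants, check conditions \ref{item: parameters 2}, \ref{item: parameters 4} and \ref{item: parameters 5} directly, and then take $\Phi$ large enough to satisfy \ref{item: parameters 1} and \ref{item: parameters 3}. The only input is the division rate condition (\ref{eq: division rate}). Writing $D = 2^{dp}$, it gives $D \geq 9$, hence $D-1 \geq 8$ and $\lfloor D\rfloor \geq 9$; this is the sole place where $p$ and $d$ enter.

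First choose $\theta_0 = 4$ and $\phi = 2$. From $D-1\geq 8$ we get
\begin{align*}
    \Big\lceil \frac{2\theta_0}{D-1}\Big\rceil = \Big\lceil \frac{8}{D-1}\Big\rceil = 1
    \quad\text{and}\quad
    \Big\lceil \frac{4\theta_0}{D-1}\Big\rceil = \Big\lceil \frac{16}{D-1}\Big\rceil \leq 2 .
\end{align*}
With these bounds the conditions on $\phi$ and $\theta_0$ follow:
\begin{itemize}
    \item Condition \ref{item: parameters 5} reads $\phi-\theta_0 = -2 \leq -\lceil 16/(D-1)\rceil$, which holds by the second bound.
    \item Condition \ref{item: parameters 4} reads $2 \leq \lfloor D\rfloor - 1 - 4 - 1 = \lfloor D\rfloor - 6$, which holds since $\lfloor D\rfloor - 6 \geq 3$.
    \item The part $\phi\leq\theta_0$ of condition \ref{item: parameters 2} is trivial.
\end{itemize}

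Next set $\Phi = \max\{7, \lceil D\rceil - 1\}$. This satisfies \ref{item: parameters 3} by construction. It satisfies \ref{item: parameters 1}, which reads $2 \leq \Phi - 4 - 1$, because $\Phi\geq 7$. It also gives $\theta_0 = 4 \leq \Phi$, completing \ref{item: parameters 2}. All three integers are at least $2$, as required.

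There is no real obstacle; the only point needing care is condition \ref{item: parameters 5}. This is what forces $\theta_0 \geq 4$: with $\theta_0 = 3$ and $9\leq D<13$ one gets $\lceil 12/(D-1)\rceil = 2$, so no $\phi\geq 2$ would work. Condition \ref{item: parameters 4} is then the binding check, and it is exactly where $D\geq 9$, rather than merely $D>1$, is used. The hypothesis $p\geq 1$ in the statement plays no further role once (\ref{eq: division rate}) is in force.
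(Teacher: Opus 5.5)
Your proposal is correct and matches the paper's proof. The paper takes $\theta_0 = 4$, $\phi = 2$ and $\Phi = \lceil 2^{dp}\rceil - 1$ and leaves the verification of (i)--(v) to the reader, and your explicit checks from $2^{dp}\geq 9$ supply exactly that verification; your $\Phi=\max\{7,\lceil 2^{dp}\rceil - 1\}$ always equals $\lceil 2^{dp}\rceil - 1$, since $\lceil 2^{dp}\rceil - 1 \geq 8$.
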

\begin{proof}
    Let $p\geq 1$. Choose $\theta_0 = 4$, $\phi = 2$ and $\Phi = \big\lceil2^{dp}\big\rceil -1$. These values satisfy the claim, when $d = d(p)$ is chosen by (\ref{eq: division rate}).
\end{proof}

Before we show some structure of the stopping time $\tau_\Lambda$ defined by (\ref{eq: termination number}), we first show that under certain circumstances, $\tau_\Lambda$ is well defined. The next result also motivates to limit our analysis to $(c,\delta,\Phi)$-weakly porous sets, which is apparent in the later results of this section. However, we will later show that fixing the translation as $\Phi$ is not an obstacle.

\begin{lemma} \label{lem: finite termination}
     Suppose $E \subseteq \Rn$ is a nonempty closed set, $R(\gamma_0)\subseteq \Rn$ is a parabolic rectangle with $0\leq \gamma_0 \leq 1/2$ and $0<\Lambda \leq |\mathcal{M}\big(R^{\Phi}(\gamma_0)\big)|$. Then, for any $m \geq 1$ and any $P(\gamma) \in \mathcal{D}_m\big(R(\gamma_0)\big)$ with $0\leq \gamma \leq 1/2$ the stopping time $\tau_\Lambda$ is well defined. In particular, $\tau_\Lambda\big(P(\gamma)\big) \leq m$.
\end{lemma}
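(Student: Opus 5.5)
It suffices to check the defining condition of (\ref{eq: termination number}) at the top level $k=m$. The set of admissible $k$ is then nonempty, so the minimum exists and is at most $m$. The plan is to show that for $k=m$ some translation parameter $\theta\in[\phi-\theta_0,\Phi-\theta_0]$ makes $\pi_m^+P^\theta(\gamma)$ contain, or coincide with, a rectangle of the form $R^{j}(\gamma_0)$ whose maximal hole is at least $|\mathcal{M}(R^{\Phi}(\gamma_0))|\geq\Lambda$.

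First I will locate $\pi_m^+P(\gamma)$. Since $P(\gamma)\in\mathcal{D}_m(R(\gamma_0))$, we have $\pi_m P(\gamma)=R(\gamma_0)$. Each application of $\pi^+$ adds an integer multiple $\theta_0$ of the parent's temporal length, and the extended lattice is closed under integer translations. Hence $\pi_m^+P(\gamma)=R(\gamma_0)+(0,jl_t(R(\gamma_0)))$ for some integer $j$, that is, $\pi_m^+P(\gamma)=R^{j}(\gamma_0)$.

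Next I bound $j$ using Lemma \ref{lem: bounded distance}. The lower faces of $P(\gamma)\subseteq R(\gamma_0)$ and $R(\gamma_0)$ differ by less than $l_t(R(\gamma_0))$. The lemma gives that the distance between the lower faces of $P(\gamma)$ and $\pi_m^+P(\gamma)$ is less than $2\theta_0\frac{2^{dp}}{2^{dp}-1}l_t(R(\gamma_0))$. Since all translations are upward, $j\geq\theta_0\geq 2$, and $j$ is at most roughly $\theta_0+\lceil 2\theta_0/(2^{dp}-1)\rceil+1$. (A sharper count is possible: the last step is exactly $\theta_0$, and the earlier steps contribute a geometric sum of lower-order lengths.) Then $\pi_m^+P^\theta(\gamma)=R^{j+\theta}(\gamma_0)$.

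I choose $\theta=\Phi-j$, which gives $\pi_m^+P^\theta(\gamma)=R^{\Phi}(\gamma_0)$ and therefore
\[
\max_{\theta\in[\phi-\theta_0,\Phi-\theta_0]}|\mathcal{M}(\pi_m^+P^\theta(\gamma))|\geq|\mathcal{M}(R^{\Phi}(\gamma_0))|\geq\Lambda .
\]
It remains to check that $\Phi-j$ lies in $[\phi-\theta_0,\Phi-\theta_0]$, i.e.\ $\theta_0\leq j\leq\Phi-\phi+\theta_0$. The lower bound follows from $j\geq\theta_0$. The upper bound follows from the bound on $j$ together with Lemma \ref{lem: parameters}\ref{item: parameters 1}, which gives $\Phi-\phi\geq\theta_0+\lceil 2\theta_0/(2^{dp}-1)\rceil$.

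The main obstacle is the precise integer bookkeeping for $j$, in particular avoiding an off-by-one from the offset between the lower faces of $P(\gamma)$ and $R(\gamma_0)$. I will handle it by tracking the lower-face positions exactly through the chain, using that $j$ is an integer and that the inequality in Lemma \ref{lem: bounded distance} is strict, so that $j-\theta_0\leq\lceil 2\theta_0/(2^{dp}-1)\rceil$. If that is too tight, the slack in Lemma \ref{lem: parameters}\ref{item: parameters 1}, where $\phi=2$ and $\Phi=\lceil 2^{dp}\rceil-1\geq 8$, absorbs an extra unit.
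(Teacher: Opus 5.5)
Your proposal is correct and follows the paper's proof. Like the paper, you write $\pi_m^+P(\gamma)=R^{j}(\gamma_0)$, bound $j$ using Lemma \ref{lem: bounded distance}, choose $\theta=\Phi-j$, and check the admissible range with Lemma \ref{lem: parameters}\ref{item: parameters 1}.

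One arithmetic slip: $2\theta_0\frac{2^{dp}}{2^{dp}-1}=2\theta_0+\frac{2\theta_0}{2^{dp}-1}$. The lemma as stated, together with the lower-face offset $<l_t(R(\gamma_0))$, therefore gives $j\leq 2\theta_0+\lceil 2\theta_0/(2^{dp}-1)\rceil$, not $\theta_0+\cdots$; your smaller bound needs the sharper count you sketch. The weaker bound is exactly what condition \ref{item: parameters 1} is built to absorb, so no extra slack is needed.
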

\begin{proof}
     Let $m \geq 1$ and suppose $P(\gamma) \in \mathcal{D}_m\big(R(\gamma_0)\big)$ with $0 \leq \gamma \leq 1/2$. Observe that $\pi_m P(\gamma) = R(\gamma_0)$ and $\pi_{m}^+P(\gamma) \in \mathcal{D}_0^{\textnormal{ext}}\big(R(\gamma_0)\big)$, which means that we can write 
   \begin{align}
       \pi_{m}^+P(\gamma) = R^{\theta_1}(\gamma_0)\label{eq: finite termination parent}
   \end{align}
   for some integer $\theta_1 \geq \theta_0$. To bound $\theta_1$ from above, we estimate the distance between $\partial_{\textnormal{low}}R(\gamma_0)$ and $\partial_{\textnormal{low}}R^{\theta_1}(\gamma_0)$. By Lemma \ref{lem: bounded distance}, we have
    \begin{align*}
        \dist_t\big(\partial_{\textnormal{low}}R(\gamma_0),\partial_{\textnormal{low}}R^{\theta_1}(\gamma_0)\big) &\leq \dist_t\big(\partial_{\textnormal{low}}P(\gamma),\partial_{\textnormal{low}}R^{\theta_1}(\gamma_0)\big) + l_t\big(R(\gamma_0)\big)\\
        &= \dist_t\big(\partial_{\textnormal{low}}P(\gamma),\partial_{\textnormal{low}}\pi_{m}^+P(\gamma)\big) + l_t\big(R(\gamma_0)\big)\\
        &< 2\theta_0\frac{2^{dp}}{2^{dp}-1}l_t\big(\pi_{m}^+P(\gamma)\big) +l_t\big(R(\gamma_0)\big)\\
        &\leq \bigg(2\theta_0 +\bigg\lceil\frac{2\theta_0}{2^{dp}-1}\bigg\rceil +1\bigg) l_t\big(R(\gamma_0)\big).
    \end{align*}
    Since $\theta_1$ is an integer, the strict inequality above tells us that necessarily
    \begin{align*}
        \theta_0\leq \theta_1 \leq 2\theta_0 +\bigg\lceil\frac{2\theta_0}{2^{dp}-1}\bigg\rceil.
    \end{align*}
    
    To finish the proof, notice that if $\theta = \Phi -\theta_1$, then by (\ref{eq: finite termination parent}) we have
    \begin{align}
        \pi_{m}^+P^\theta(\gamma) =R^{\theta_1+\theta}(\gamma_0) = R^{\Phi}(\gamma_0), \label{eq: finite termination translated parent}
    \end{align}
    where integer $\Phi\geq 2$ is from Lemma \ref{lem: parameters}. Observe that using the upper and lower bounds of  $\theta_1$, the parameter $\theta$ is restricted on an interval
    \begin{align*}
        \Phi- 2\theta_0 -\bigg\lceil\frac{2\theta_0}{2^{dp}-1}\bigg\rceil \leq \theta \leq \Phi - \theta_0.
    \end{align*}
    Lemma \ref{lem: parameters}\ref{item: parameters 1} further implies that 
    \begin{align*}
       \phi - \theta_0 \leq \theta \leq \Phi - \theta_0
    \end{align*}
    for an integer $\phi\geq 2$. Furthermore, from (\ref{eq: finite termination translated parent}) it clearly follows that
    \begin{align*}
        |\mathcal{M}\big(\pi_{m}^+P^\theta(\gamma)\big)| = |\mathcal{M}\big(R^{\Phi}(\gamma_0)\big)| \geq \Lambda.
    \end{align*}
    Thus, $\tau_\Lambda\big(P(\gamma)\big) \leq m $ by definition (\ref{eq: termination number}).
\end{proof}

\subsection{Stopping time chains}
We consider any dyadic subcollection $\mathcal{A} \subseteq \mathcal{D}\big(R(\gamma_0)\big)$. Now, each $P(\gamma) \in \mathcal{A}$ serves as a base or a generator for a chain $\big(\pi_i^+P(\gamma)\big)_i$. These chains are easier to work with when sorted by their respective stopping time $\tau_\Lambda$. Hence, for any $k=1,2,\dots$ we define the collections
\begin{align}
    \mathcal{S}_k(\mathcal{A},\Lambda) = \bigcup_{P(\gamma) \in \mathcal{A}} \Big\{\pi_i^+P(\gamma) \,:\, \tau_\Lambda\big(\pi_i^+P(\gamma)\big) = k, \; i =0,1,\dots, \tau_\Lambda \big(P(\gamma)\big)-1 \Big\}.\label{eq: termination collections}
\end{align}
Notice that if $\tau_\Lambda\big(\pi_i^+P(\gamma)\big)$ is not defined, then simply $\pi_i^+P(\gamma) \notin \mathcal{S}_k(\mathcal{A},\Lambda)$.

The geometric process behind $\tau_\Lambda$ is defined in such a way that the search range for $\tau_\Lambda\big(\pi^+ P(\gamma)\big)$ is contained in the search region for $\tau_\Lambda\big(P(\gamma)\big)$. This means that it takes less steps to reach an $E$-free region from $\pi^+P(\gamma)$ than from $P(\gamma)$. We start by formulating exactly how the forward-in-time parent operator controls the stopping time $\tau_\Lambda$. We define the forward-in-time parents of the collection $\mathcal{S}_k(\mathcal{A},\Lambda)$ by
\begin{align*}
    \pi^+ \mathcal{S}_{k}(\mathcal{A},\Lambda) = \big\{\pi^+ P(\gamma) \,:\, P(\gamma) \in \mathcal{S}_{k}(\mathcal{A},\Lambda) \big\}
\end{align*}
for any $k=1,2,\dots$, should the forward-in-time parent be well-defined.

\begin{lemma} \label{lem: parent sets}
    Let $R(\gamma_0) \subseteq \Rn$ be a parabolic rectangle with $0\leq \gamma_0 \leq 1/2$ and $0<\Lambda \leq |\mathcal{M}\big(R^{\Phi}(\gamma_0)\big)|$. Then, 
    \begin{align*}
        \pi^+ \mathcal{S}_{k+1}(\mathcal{A},\Lambda) \subseteq \mathcal{S}_{k}(\mathcal{A},\Lambda)
    \end{align*}
    for every $k=1,2,\dots$ and for any subcollection $\mathcal{A} \subseteq \mathcal{D}\big(R(\gamma_0)\big)$.
\end{lemma}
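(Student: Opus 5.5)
The plan is to reduce everything to a shift identity for the forward-in-time parent operator, and then read off both the stopping time of $\pi^+Q(\beta)$ and the admissibility of its index directly from the definition (\ref{eq: termination number}). Fix $k\geq 1$ and an element $Q(\beta)\in\mathcal{S}_{k+1}(\mathcal{A},\Lambda)$. By (\ref{eq: termination collections}) we may write $Q(\beta)=\pi_i^+P(\gamma)$ for some $P(\gamma)\in\mathcal{A}$ with $\tau_\Lambda\big(\pi_i^+P(\gamma)\big)=k+1$ and $0\leq i\leq \tau_\Lambda\big(P(\gamma)\big)-1$. Since $\pi^+Q(\beta)=\pi_{i+1}^+P(\gamma)$, it suffices to show two things:
\begin{align*}
    \tau_\Lambda\big(\pi_{i+1}^+P(\gamma)\big)=k \quad\text{and}\quad i+1\leq \tau_\Lambda\big(P(\gamma)\big)-1.
\end{align*}
These together say exactly that $\pi^+Q(\beta)\in\mathcal{S}_k(\mathcal{A},\Lambda)$, generated by the same $P(\gamma)$.

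\emph{Step 1 (shift identity).} First I will verify that for every $U(\alpha)\in\mathcal{D}^{\textnormal{ext}}\big(R(\gamma_0)\big)$, every $l\geq 0$, every $j\geq 1$ and every $\theta\in\R$,
\begin{align*}
    \pi_j^+\big(\pi_l^+U(\alpha)\big)^\theta=\pi_{j+l}^+U^\theta(\alpha),
\end{align*}
whenever both sides are defined. The untranslated parts agree by the recursive definition $\pi_{i+1}^+=\pi^+\circ\pi_i^+$. For the translations, the convention translates by $\theta\, l_t\big(\pi_j(\pi_l^+U(\alpha))\big)$ on the left and by $\theta\, l_t\big(\pi_{j+l}U(\alpha)\big)$ on the right. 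Both rectangles lie in the same extended dyadic layer, since integer translations preserve $\mathcal{D}^{\textnormal{ext}}$ and its layers. By the similarity property they therefore have equal temporal side lengths, so the two translations coincide.

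\emph{Step 2 (read off stopping times).} Denote by $\mathrm{C}_j(U)$ the condition $\max_{\theta\in[\phi-\theta_0,\Phi-\theta_0]}|\mathcal{M}(\pi_j^+U^\theta)|\geq\Lambda$. Step 1 gives that $\mathrm{C}_j\big(\pi_l^+P(\gamma)\big)$ holds if and only if $\mathrm{C}_{j+l}\big(P(\gamma)\big)$ holds. By definition, $\tau_\Lambda\big(\pi_i^+P(\gamma)\big)=k+1$ means that $\mathrm{C}_{i+j}(P)$ fails for $j=1,\dots,k$ and holds for $j=k+1$. Reindexing for $\pi_{i+1}^+P(\gamma)$, the condition $\mathrm{C}_{i+1+j}(P)$ fails for $j=1,\dots,k-1$ and holds for $j=k$, so $\tau_\Lambda\big(\pi_{i+1}^+P(\gamma)\big)=k$. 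Similarly, $i\leq\tau_\Lambda(P)-1$ means that $\mathrm{C}_j(P)$ fails for $j\leq i$. Combined with the above, $\tau_\Lambda(P)=i+k+1$, and hence $i+1\leq i+k=\tau_\Lambda(P)-1$ because $k\geq 1$.

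\emph{Step 3 (well-definedness).} The only genuine obstacle is checking that every stopping time above is defined, i.e.\ that the minimum in (\ref{eq: termination number}) ranges over enough levels. If $Q(\beta)\in\mathcal{D}^{\textnormal{ext}}_{m'}$, then $\tau_\Lambda(Q)=k+1$ forces $m'\geq k+1$. Consequently $\pi^+Q(\beta)\in\mathcal{D}^{\textnormal{ext}}_{m'-1}$ with $m'-1\geq k$, so the level $k$ where $\mathrm{C}_k$ holds is admissible for $\pi^+Q$. In the same way, $P(\gamma)\in\mathcal{D}_m$ with $m\geq i+k+1$, so the value $\tau_\Lambda(P)=i+k+1$ lies in the admissible range. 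I will invoke Lemma \ref{lem: finite termination}, which uses the hypothesis $\Lambda\leq|\mathcal{M}(R^\Phi(\gamma_0))|$, only to guarantee that $\tau_\Lambda(P)$ exists in the first place. I expect Step 1, and specifically the bookkeeping of the translation convention for $\pi_j^+$ applied to an already translated parent, to be the point requiring the most care.
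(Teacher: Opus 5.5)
Your proposal is correct and follows essentially the same route as the paper. The paper first proves $\tau_\Lambda\big(\pi_i^+P(\gamma)\big)=\tau_\Lambda\big(P(\gamma)\big)-i$ for $0\leq i\leq \tau_\Lambda\big(P(\gamma)\big)-1$, using the same reindexing $\pi_k^+\big(\pi_i^+P(\gamma)\big)^\theta=\pi_{k+i}^+P^\theta(\gamma)$, and then deduces $\tau_\Lambda\big(\pi_{i+1}^+Q_0(\alpha_0)\big)=k$ with $i+1\leq \tau_\Lambda\big(Q_0(\alpha_0)\big)-1$; your Steps 1 and 3 make explicit the translation convention and level bookkeeping that the paper uses implicitly.
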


\begin{proof}
    We first claim that for any fixed integer $m\geq 1$ and $P(\gamma) \in \mathcal{D}_m\big(R(\gamma_0)\big)$ with $0 \leq \gamma \leq 1/2$ such that $j_P  = \tau_\Lambda\big(P(\gamma)\big)\geq 1$ we have
    \begin{align}
        \tau_\Lambda\big(\pi_i^+P(\gamma)\big) = j_P -i \label{eq: parent sets index}
    \end{align}
    for any $i=0,\dots, j_P-1$. Observe that by Lemma \ref{lem: finite termination} $j_P\leq m$ is well defined since $\Lambda \leq |\mathcal{M}\big(R^{\Phi}(\gamma_0)\big)|$, and hence the forward-in-time parent $\pi_i^+$ is also defined.
    
    To prove (\ref{eq: parent sets index}), fix $i = 0,\dots,j_P -1$. Working directly with the definition (\ref{eq: termination number}), and since $\pi_i^+P(\gamma) \in \mathcal{D}_{m-i}^{\textnormal{ext}}\big(R(\gamma_0)\big)$, we get
    \begin{align*}
        \tau_\Lambda \big(\pi_i^+ P(\gamma)\big) &= \min\bigg\{k =1,\dots,m-i \,:\, \max_{\theta\in [\phi-\theta_0,\Phi-\theta_0]}|\mathcal{M}\big(\pi_k^+\big(\pi_i^+P(\gamma)\big)^\theta \big)| \geq \Lambda \bigg\} \\
        &= \min\bigg\{k =1,\dots,m-i \,:\, \max_{\theta\in [\phi-\theta_0,\Phi-\theta_0]}|\mathcal{M}\big(\pi_{k+i}^+P^\theta(\gamma)\big)| \geq \Lambda \bigg\} \\
        &= \min\bigg\{j = 1+i,\dots,m \,:\, \max_{\theta\in [\phi-\theta_0,\Phi-\theta_0]}|\mathcal{M}\big(\pi_{j}^+P^\theta(\gamma)\big)| \geq \Lambda \bigg\} -i.
    \end{align*}
    Since $i \leq j_P -1$, we have an upper bound
    \begin{align*}
         \tau_\Lambda \big(\pi_i^+ P(\gamma)\big) \leq \min\bigg\{j = j_P,\dots,m \,:\, \max_{\theta\in [\phi-\theta_0,\Phi-\theta_0]}|\mathcal{M}\big(\pi_j^+P^\theta(\gamma)\big)| \geq \Lambda \bigg\} -i = j_P -i.
    \end{align*}
    On the other hand, since $i \geq 0$, we get a lower bound
    \begin{align*}
        \tau_\Lambda \big(\pi_i^+ P(\gamma)\big) \geq \min\bigg\{j = 1,\dots,m \,:\, \max_{\theta\in [\phi-\theta_0,\Phi-\theta_0]}|\mathcal{M}\big(\pi_j^+P^\theta(\gamma)\big)| \geq \Lambda \bigg\} -i = j_P -i,
    \end{align*}
    proving the claim.

    Consider then for any $\mathcal{A} \subseteq \mathcal{D}\big(R(\gamma_0)\big)$ and every $k=1,2,\dots$ the collections $\mathcal{S}_k = \mathcal{S}_k(\mathcal{A},\Lambda)$. We take any $Q(\alpha) \in \mathcal{S}_{k+1}$ with $0 \leq \alpha \leq 1/2$ for some fixed $k = 1,2,\dots$. By definition (\ref{eq: termination collections}), we can write $Q(\alpha)$ as a forward-in-time parent of a base rectangle of some chain. Namely, we have
    \begin{align*}
        Q(\alpha) = \pi_{i}^+Q_0(\alpha_0)
    \end{align*}
    for some $Q_0(\alpha_0) \in \mathcal{A}$ with $0 \leq \alpha_0 \leq 1/2$ and $i = 0,\dots, j_{Q_0}-1$, where $j_{Q_0} = \tau_\Lambda\big(Q_0(\alpha_0)\big) \geq 1$. Furthermore, the index $k+1$ of the collection $\mathcal{S}_{k+1}$ sets the stopping time of $Q(\alpha)$ as
    \begin{align*}
        \tau_\Lambda\big(Q(\alpha)\big) =\tau_\Lambda\big(\pi_{i}^+Q_0(\alpha_0)\big) = k+1.
    \end{align*}
    
    We may assume $Q_0(\alpha_0) \neq R(\gamma_0)$, since $\tau\big(\pi_j^+R(\gamma_0)\big)$ is not defined for any $j\in \N$. Thus, $Q_0(\alpha_0) \in \mathcal{D}_{m}\big(R(\gamma_0)\big)$ for some $m \geq 1$. By (\ref{eq: parent sets index}) and the above we have
    \begin{align*}
        k+1  =  j_{Q_0} - i.
    \end{align*}
    Reorganizing the terms shows that the next index $i+1$ is still bounded by
    \begin{align*}
        i +1 = j_{Q_0} - k \leq  j_{Q_0} - 1.
    \end{align*}
    This means that we can apply (\ref{eq: parent sets index}) to $\pi_{i+1}^+Q_0(\alpha_0)$ to obtain
    \begin{align*}
        \tau_\Lambda\big(\pi^+Q(\alpha)\big) =\tau_\Lambda\big(\pi_{i+1}^+Q_0(\alpha_0)\big) =  j_{Q_0} -(i+1) = k.
    \end{align*}
    proving $\pi^+ Q(\alpha) \in \mathcal{S}_k$, and consequently $\pi^+ \mathcal{S}_{k+1} \subseteq \mathcal{S}_k$.
\end{proof}

For the following results we assume that $E \subseteq \Rn$ is $(c,\delta,\Phi)$-weakly porous set. We use the translation $\Phi$ to ensure that the stopping time is finite for every $P(\gamma) \in \mathcal{D}\big(R(\gamma_0)\big)$. On the other hand, the doubling features of parabolic weakly porous sets of Section \ref{sec: doubling results} are useful.

The next result is rather technical but a necessary part of proving the $\alpha$-improvement of parabolic weakly porous sets, see Lemma \ref{lem: wp => polynomial decay}. If $P(\gamma) \in \mathcal{D}\big(R(\gamma_0)\big)$, it is possible that the chain of rectangles $\big(\pi_i^+P(\gamma)\big)_i$ escapes outside of $R(\gamma_0)$. In the proof of the $\alpha$-improvement, we have to show that the union of the rectangles outside of $R(\gamma_0)$ is comparable to the union of those inside. The following result is a necessary prerequisite.

\begin{lemma} \label{lem: decay of interim space}
    Suppose $E\subseteq \Rn$ is $(c_0,\delta_0,\Phi)$-weakly porous for some $c_0,\delta_0\in (0,1)$ and take any parabolic rectangle $R(\gamma_0) \subseteq \Rn$ with $0\leq \gamma_0 \leq 1/2$ and a subcollection $\mathcal{A} \subseteq \mathcal{D}\big(R(\gamma_0)\big)$. Then, the collections $\mathcal{S}_k\big(\mathcal{A},\Lambda)$ with $\Lambda = \delta|\mathcal{M}\big(R^{\Phi}(\gamma_0)\big)|$ for any $0<\delta \leq 1$ satisfy
        \begin{align*}
            \bigcup_{k=1}^{\infty}\bigcup_{P(\gamma)\in \mathcal{S}_k(\mathcal{A},\Lambda)}P(\gamma) \subseteq \bigcup_{\theta \in [0,\psi]} R^{\theta}(\gamma_0),
        \end{align*}
        for some $\psi \leq C\delta^\mu$ where $\mu = \mu(n,p,d,c_0,\delta_0)>0$ and $C = C(p,d)>0$.
\end{lemma}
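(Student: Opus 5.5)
The plan is to show that every rectangle occurring in some $\mathcal{S}_k(\mathcal{A},\Lambda)$ is temporally small relative to $R(\gamma_0)$. This follows from the non-stopping condition together with the forward-in-time doubling of Corollary~\ref{cor: mh doubling}. Lemma~\ref{lem: bounded distance} then shows such a rectangle sits only slightly above $R(\gamma_0)$ in time.

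\emph{Setup.} Take $Q(\alpha)\in\mathcal{S}_k(\mathcal{A},\Lambda)$, so $Q(\alpha)=\pi_i^+P(\gamma)$ for some $P(\gamma)\in\mathcal{A}$, $P(\gamma)\in\mathcal{D}_m(R(\gamma_0))$, and some $0\le i\le \tau_\Lambda(P(\gamma))-1$.

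If $i=0$, then $Q(\alpha)=P(\gamma)\subseteq R(\gamma_0)=R^0(\gamma_0)$ and there is nothing to show.

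If $i\ge 1$, then $i<\tau_\Lambda(P(\gamma))$ means the stopping condition fails at level $i$. Since $\phi-\theta_0\le 0\le \Phi-\theta_0$ by Lemma~\ref{lem: parameters}\ref{item: parameters 2}, the choice $\theta=0$ is admissible in (\ref{eq: termination number}), so
\[
|\mathcal{M}(Q(\alpha))|<\Lambda=\delta|\mathcal{M}(R^\Phi(\gamma_0))| .
\]

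\emph{Lower bound on the hole of $Q(\alpha)$.} Next I bound $|\mathcal{M}(Q(\alpha))|$ from below in terms of $|\mathcal{M}(R^\Phi(\gamma_0))|$. Write $j=m-i\ge1$ for the number of generations from $Q(\alpha)$ up to the top level, so $Q(\alpha)\in\mathcal{D}^{\textnormal{ext}}_j(R(\gamma_0))$.

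As in the proof of Lemma~\ref{lem: finite termination}, $\pi_j^+Q(\alpha)=\pi_m^+P(\gamma)=R^{\theta_1}(\gamma_0)$ with $\theta_0\le\theta_1\le 2\theta_0+\lceil 2\theta_0/(2^{dp}-1)\rceil$. Hence $\psi':=\Phi-\theta_1\in[\phi-\theta_0,\Phi-\theta_0]$ by Lemma~\ref{lem: parameters}\ref{item: parameters 1}, and $\pi_j^+Q^{\psi'}(\alpha)=R^\Phi(\gamma_0)$.

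Since $E$ is $(c_0,\delta_0,\Phi)$-weakly porous and $\Phi>1$, Corollary~\ref{cor: mh doubling} applies with $\theta_1=\phi$, $\theta_2=\Phi$. It gives $\sigma=\sigma(n,p,d,c_0,\delta_0)\in(0,1)$ with
\[
|\mathcal{M}(Q(\alpha))|\ge\sigma^j|\mathcal{M}(R^\Phi(\gamma_0))|.
\]
Note $|\mathcal{M}(R^\Phi(\gamma_0))|>0$, since $E$ has measure zero by Proposition~\ref{prop: 0 measure}. Comparing with the non-stopping bound gives $\sigma^j<\delta$, that is,
\[
j>\frac{\log(1/\delta)}{\log(1/\sigma)}.
\]
By Corollary~\ref{cor: dyadic scale} we then get
\[
l_t(Q(\alpha))\le 2\cdot2^{-jdp}\,l_t(\pi_j Q(\alpha))=2\cdot 2^{-jdp}l_t(R(\gamma_0))\le 2\,\delta^{\mu}\,l_t(R(\gamma_0)),
\qquad \mu=\frac{dp}{\log_2(1/\sigma)}.
\]

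\emph{Localization in time.} By Lemma~\ref{lem: bounded distance}, the lower face of $Q(\alpha)=\pi_i^+P(\gamma)$ lies above that of $P(\gamma)$, and $\partial_{\textnormal{low}}P(\gamma)\subseteq R(\gamma_0)$ in time. The distance between these lower faces is less than $2\theta_0\frac{2^{dp}}{2^{dp}-1}l_t(Q(\alpha))$. Hence the top of $Q(\alpha)$ is at most
\[
\Big(2\theta_0\tfrac{2^{dp}}{2^{dp}-1}+1\Big)l_t(Q(\alpha))\le C(p,d)\,\delta^\mu\, l_t(R(\gamma_0))
\]
above the top of $R(\gamma_0)$, and its bottom is not below the bottom of $R(\gamma_0)$.

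\emph{Localization in space.} Every forward-in-time parent lies in the time-strip $\textnormal{pr}_x(R(\gamma_0))\times\R$, because the extended lattice only translates in time. Therefore $Q(\alpha)\subseteq\bigcup_{\theta\in[0,\psi]}R^\theta(\gamma_0)$ with $\psi=C\delta^\mu$.

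\emph{Main obstacle.} The step I expect to need the most care is the hole comparison. One must check that the translation $\psi'$ that brings $\pi_j^+Q(\alpha)$ exactly onto $R^\Phi(\gamma_0)$ lies in the window $[\phi-\theta_0,\Phi-\theta_0]$ where Corollary~\ref{cor: mh doubling} is valid, uniformly in $i$. This is handled by reusing the computation of Lemma~\ref{lem: finite termination} for the chain started at $Q(\alpha)$ rather than at $P(\gamma)$. That reuse is legitimate because $\pi_j^+Q(\alpha)=\pi_m^+P(\gamma)$.
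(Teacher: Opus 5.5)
Your proof is correct and uses the same ingredients as the paper: you identify $\pi_m^+P(\gamma)$ with $R^{\theta_1}(\gamma_0)$ as in Lemma~\ref{lem: finite termination}, apply the doubling of Corollary~\ref{cor: mh doubling} down from $R^{\Phi}(\gamma_0)$, and finish with Corollary~\ref{cor: dyadic scale} and Lemma~\ref{lem: bounded distance}, arriving at the same $\mu=dp/\log_2(1/\sigma)$ and $C=4\theta_0\frac{2^{dp}}{2^{dp}-1}+2$. The difference is only packaging: the paper bounds the stopping time of each base rectangle by $m-i$ with $i=\min\{m-1,\lfloor \ln\delta/\ln\sigma\rfloor\}$ and then controls the last chain element, whereas you use the failure of the stopping criterion at $\theta=0$ for each non-base chain element directly, which avoids the paper's case split $j_P=1$ versus $j_P\ge 2$.
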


\begin{proof}
    For some fixed parabolic rectangle $R(\gamma_0) \subseteq \Rn$ with $0\leq \gamma_0 \leq 1/2$ and $0<\delta\leq 1$, let $\Lambda = \delta|\mathcal{M}\big(R^{\Phi}(\gamma_0)\big)|$. We observe first that (\ref{eq: termination collections}) implies
    \begin{align}
        \bigcup_{k=1}^\infty \bigcup_{P(\gamma)\in \mathcal{S}_k(\mathcal{A},\Lambda)}P(\gamma) = \bigcup_{P(\gamma) \in \mathcal{A}}\Big\{\pi_j^+P(\gamma)\, :\, j  = 0,1,\dots,\tau_\Lambda\big(P(\gamma)\big)-1 \Big\}. \label{eq: decay of interim space unionset}
    \end{align}
    In other words, it is enough to study the chains generated by the base rectangles in $\mathcal{A}$ up to the stopping time $\tau_\Lambda$. Notice that since $\tau_\Lambda\big(R(\gamma_0)\big)$ is not defined, we may assume $R(\gamma_0) \notin \mathcal{A}$.
    
    We start by fixing an integer $m\geq 1$ and considering any base rectangle $P(\gamma) \in \mathcal{A} \cap \mathcal{D}_m\big(R(\gamma_0)\big)$ with $0\leq \gamma \leq 1/2$. We shall write $j_P  = \tau_\Lambda\big(P(\gamma)\big) \geq 1$ for simplicity. Thus, the object of our interest is the chain $\big(\pi_j^+P(\gamma)\big)_{j=0}^{j_P-1}$. By Lemma \ref{lem: finite termination}, we have $j_P  \leq m$. To be more precise, (\ref{eq: finite termination translated parent}) states that
    \begin{align*}
        \pi_{m}^+P^\theta(\gamma) = R^{\Phi}(\gamma_0)
    \end{align*}
    for some $\theta \in [\phi-\theta_0,\Phi-\theta_0]$, which clearly implies
    \begin{align}
        |\mathcal{M}\big(\pi_{m}^+P^\theta(\gamma)\big)| = |\mathcal{M}\big(R^{\Phi}(\gamma_0)\big)|. \label{eq: decay of interim space m index}
    \end{align}
    
    We shall next use the parabolic weak porosity to bring information from $R^\Phi(\gamma_0)$ closer to $P(\gamma)$. We do this by applying the doubling of the maximal hole function along the chain of forward-in-time parents of $P(\gamma)$. Since $E$ is $(c_0,\delta_0,\Phi)$-weakly porous, by Corollary \ref{cor: mh doubling} the maximal hole function is doubling, that is, there exists some $\sigma = \sigma(n,p,d,c_0,\delta_0, \theta_0, \phi, \Phi) \in (0,1)$ such that
    \begin{align*}
        |\mathcal{M}\big(\pi_{m-i}^+P(\gamma)\big)| \geq \sigma^i|\mathcal{M}\big(\pi_{i}^+\big(\pi_{m-i}^+P(\gamma)\big)^\theta\big)|= \sigma^i|\mathcal{M}\big(\pi_{m}^+P^\theta(\gamma)\big)|
    \end{align*}
    for every index $i = 1,\dots,m$. If we define 
    \begin{align*}
         \Tilde{\theta}_i = \begin{cases}
            \theta, \quad i = 0, \\
            0, \quad i = 1,\dots,m,
        \end{cases}
    \end{align*}
    then we can extend the inequality above to the case $i=0$, that is, 
    \begin{align}
        |\mathcal{M}\big(\pi_{m-i}^+P^{\Tilde{\theta}_i}(\gamma)\big)| \geq \sigma^i|\mathcal{M}\big(\pi_{m}^+P^\theta(\gamma)\big)| \label{eq: decay of interim space doubling}
    \end{align}
    for every $i = 0,\dots,m$.

    We will then choose $0 \leq i \leq m-1$ to be the largest index such that $\sigma^i \geq \delta$. Such an index indeed exists, since $\sigma^0 = 1 \geq \delta$. Therefore, we set $i$ as
    \begin{align*}
        i = \min\bigg\{m-1, \bigg\lfloor \frac{\ln \delta}{\ln\sigma}\bigg\rfloor\bigg\}.
    \end{align*}
    The doubling inequality (\ref{eq: decay of interim space doubling}), the choice of $i$ and (\ref{eq: decay of interim space m index}) imply that
    \begin{align*}
        |\mathcal{M}\big(\pi_{m-i}^+P^{\Tilde{\theta}_i}(\gamma)\big)| \geq \sigma^i |\mathcal{M}\big(\pi_{m}^+P^\theta(\gamma)\big)| \geq \delta |\mathcal{M}\big(R^{\Phi}(\gamma_0)\big)| = \Lambda.
    \end{align*}
    By Lemma \ref{lem: parameters}\ref{item: parameters 2} the search range parameters are such that $\Tilde{\theta}_i \in [\phi - \theta_0,\Phi-\theta_0]$ for any $i$. On the other hand, $m-i\geq 1$, thus the definition (\ref{eq: termination number}) states that 
    \begin{align}
        j_P = \tau_\Lambda\big(P(\gamma)\big)\leq m-i. \label{eq: decay of interim space index bound}
    \end{align}
    In other words, the chain $\big(\pi_j^+P(\gamma)\big)_{j=0}^{j_P-1}$ ends latest at $\pi_{m-i-1}^+P(\gamma)$. 

    It should be clear that by choosing the translation parameter $\psi_P\geq 0$ large enough, the chain $\big(\pi_j^+P(\gamma)\big)_{j=0}^{j_P-1}$ is contained in the convex hull of $R(\gamma_0)$ and $R^{\psi_P}(\gamma_0)$, that is,
    \begin{align}
        \pi_{j}^+P(\gamma) \subseteq \bigcup_{\theta \in [0,\psi_P]} R^{\theta}(\gamma_0) \label{eq: decay of interim space convex hull}
    \end{align}
    for every $j = 0,\dots, j_{P}-1$. This means that $\psi_P$ has to be so large that
    \begin{align*}
        l_t\Bigg(\bigcup_{\theta \in [0,\psi_P]} R^{\theta}(\gamma_0)\Bigg) \geq \dist_t\big(\partial_{\textnormal{low}}R(\gamma_0),\partial_{\textnormal{low}}\pi_{j_P-1}^+P(\gamma)\big) +  l_t\big(\pi_{j_P-1}^+P(\gamma)\big).
    \end{align*}
    Let us study two cases. In the first case we assume $j_P \geq 2$. Now, a sufficient condition for the above is
    \begin{align*}
         \psi_P l_t\big(R(\gamma_0)\big) + l_t\big(R(\gamma_0)\big) = \dist_t\big(R(\gamma_0),\partial_{\textnormal{low}}\pi_{j_P-1}^+P(\gamma)\big) +  l_t\big(R(\gamma_0)\big) + l_t\big(\pi_{j_P-1}^+P(\gamma)\big),
    \end{align*}
    that is, 
    \begin{align}
        \psi_P = \Big(\dist_t\big(R(\gamma_0),\partial_{\textnormal{low}}\pi_{j_P-1}^+P(\gamma)\big) +  l_t\big(\pi_{j_P-1}^+P(\gamma)\big)\Big) l_t\big(R(\gamma_0)\big)^{-1}. \label{eq: decay of interim space psi}
    \end{align}
    
    We recall that $i$ equals $m-1$ or $\big\lfloor \ln \delta (\ln \sigma)^{-1}\big\rfloor$. If $i = m-1$, then $j_P =1$ by (\ref{eq: decay of interim space index bound}), which is a contradiction. Thus, we may assume
    \begin{align*}
        i =  \bigg\lfloor \frac{\ln \delta}{\ln \sigma}\bigg\rfloor.
    \end{align*}
    The temporal side length of $\pi_{j_P-1}^+P(\gamma)$ can be now bounded using (\ref{eq: decay of interim space index bound}) and Corollary \ref{cor: dyadic scale} as
    \begin{align}
        \begin{split}
        l_t\big(\pi_{j_P-1}^+P(\gamma)\big) &\leq l_t\big(\pi_{m-i-1}^+P(\gamma)\big) = l_t\big(\pi_{m-i-1} P(\gamma)\big) \\
        &\leq 2\cdot 2^{-(i+1)dp}l_t\big(R(\gamma_0)\big) \\
        &\leq 2 \delta^{-dp\frac{\ln2}{\ln\sigma}}l_t\big(R(\gamma_0)\big).
        \end{split} \label{eq: decay of interim space length estimate}
    \end{align}
    The temporal side length of $\pi_{j_P-1}^+P(\gamma)$ is hence comparable to the temporal side length of $R(\gamma_0)$ with $\delta$ as a factor. We then bound the distance between $R(\gamma_0)$ and $\partial_{\textnormal{low}}\pi_{j_P-1}^+P(\gamma)$ with Lemma \ref{lem: bounded distance} and (\ref{eq: decay of interim space length estimate}), which yields
    \begin{align*}
         \dist_t\big(R(\gamma_0),\partial_{\textnormal{low}}\pi_{j_P-1}^+P(\gamma)\big) &\leq \dist_t\big(\partial_{\textnormal{low}}P(\gamma),\partial_{\textnormal{low}}\pi_{j_P-1}^+P(\gamma)\big)\\
         &< 2 \theta_0\frac{2^{dp}}{2^{dp}-1} l_t\big(\pi_{j_P-1}^+P(\gamma)\big) \\
        &\leq 4 \theta_0\frac{2^{dp}}{2^{dp}-1}\cdot \delta^{-dp\frac{\ln2}{\ln\sigma}}l_t\big(R(\gamma_0)\big).
    \end{align*}
    Substituting the above and (\ref{eq: decay of interim space length estimate}) into (\ref{eq: decay of interim space psi}) results in
    \begin{align*}
        \psi_P &\leq  \Big(4\theta_0\frac{2^{dp}}{2^{dp}-1} +2\Big) \delta^{-dp\frac{\ln2}{\ln\sigma}} = C \delta^\mu,
    \end{align*}
    where $C = C(p,d,\theta_0) >0$ and $\mu = \mu(n,p,d,c_0,\delta_0,\theta_0,\phi,\Phi) >0$. Since $\theta_0,\phi$ and $\Phi$ are fixed, we may remove them from the dependencies. We can also remove the dependency of $P(\gamma)$ from $\psi_P$ by defining
    \begin{align*}
        0\leq \psi = \sup_{P(\gamma)\in \mathcal{A}}\psi_P \leq C \delta^\mu.
    \end{align*}
    
    The case $j_P = 1$ is easy. Now the chain $\big(\pi_j^+P(\gamma)\big)_{j=0}^{j_P-1}$ only consists of the base rectangle $\pi_0^+P(\gamma) = P(\gamma)$. Clearly, then 
    \begin{align*}
        \pi_0^+P(\gamma) \subseteq R(\gamma_0) = R^0(\gamma_0) \subseteq \bigcup_{\theta \in [0,\psi]} R^{\theta}(\gamma_0),
    \end{align*}
    proving (\ref{eq: decay of interim space convex hull}) in both cases. Taking the union over every base rectangle $P(\gamma) \in \mathcal{A}$ yields 
    \begin{align*}
        \bigcup_{P(\gamma) \in \mathcal{A}}\Big\{\pi_j^+P(\gamma)\, :\, j =0,1,\dots,\tau_\Lambda\big(P(\gamma)\big)-1 \Big\} \subseteq \bigcup_{\theta \in [0,\psi]} R^{\theta}(\gamma_0),
    \end{align*}
    which by (\ref{eq: decay of interim space unionset}) finishes the proof.
\end{proof}

\subsection{Choosing the base rectangles}
Choosing the collection $\mathcal{A} \subseteq \mathcal{D}\big(R(\gamma_0)\big)$ correctly for $\mathcal{S}_k(\mathcal{A},\Lambda)$ is crucial. The maximal complementary collection of $\mathcal{F}_\delta^\Phi\big(R(\gamma_0)\big)$ is a natural choice. We define the collection of the maximal complementary rectangles by
\begin{align*}
    \mathcal{G}_{\delta}^\theta\big(R(\gamma_0)\big) = \big\{P(\gamma)\in \mathcal{D}\big(R(\gamma_0)\big) : P(\gamma) \cap F_\delta^\theta = \emptyset, \; \pi P(\gamma) \cap F_\delta^\theta \neq \emptyset  \big\},
\end{align*}
for every $0<\delta<1$ and $\theta \in \R$, where 
\begin{align*}
F_\delta^\theta = \bigcup_{Q(\alpha)\in \mathcal{F}_\delta^\theta(R(\gamma_0))} Q(\alpha).
\end{align*}

We want to show that the collections $\mathcal{S}_k \big(\mathcal{G}_\delta^{\Phi}\big(R(\gamma_0)\big),\Lambda\big)$ contain the whole $\mathcal{G}_\delta^\Phi\big(R(\gamma_0)\big)$ with some appropriate $\Lambda >0$. On the other hand, rectangles in $\mathcal{S}_k\big(\mathcal{G}_\delta^{\Phi}\big(R(\gamma_0)\big),\Lambda\big)$ should not intersect any rectangle in $\mathcal{F}_\delta^\Phi\big(R(\gamma_0)\big)$. We prove this next.

\begin{lemma} \label{lem: G and F extra} 
    Suppose $E\subseteq \Rn$ is $(c,\delta,\Phi)$-weakly porous for some $c,\delta \in (0,1)$. For any parabolic rectangle $R(\gamma_0) \subseteq \Rn$ with $0 \leq \gamma_0 \leq 1/2$ consider the collections $\mathcal{F}_\delta^{\Phi}\big(R(\gamma_0)\big)$, $\mathcal{G}_\delta^{\Phi}\big(R(\gamma_0)\big)$ and $\mathcal{S}_k = \mathcal{S}_k\big(\mathcal{G}_\delta^{\Phi}\big(R(\gamma_0)\big),\Lambda\big)$ with $\Lambda = \delta|\mathcal{M}\big(R^\Phi(\gamma_0)\big)|$. Then, the following statements are true:
    \begin{enumerate}[label=(\roman*)]
        \item Every $P(\gamma) \in \mathcal{G}_\delta^{\Phi}\big(R(\gamma_0)\big)$ with $0\leq \gamma \leq 1/2$ is contained in $\mathcal{S}_k$ for some $k \in \N$, that is, 
        \begin{align*}
            \mathcal{G}_\delta^{\Phi}\big(R(\gamma_0)\big) \subseteq \bigcup_{k=1}^\infty \mathcal{S}_k.
        \end{align*} \label{item: G and F extra 1}
        
        \item If $P(\gamma) \in \mathcal{S}_k$ with $0\leq \gamma \leq 1/2$ for any $k=1,2,\dots$, then $P(\gamma)$ does not intersect any $Q(\alpha) \in \mathcal{F}_\delta^{\Phi}\big(R(\gamma_0)\big)$ with $0 \leq \alpha \leq 1/2$, that is, $P(\gamma) \cap Q(\alpha) = \emptyset.$ \label{item: G and F extra 2}

    \end{enumerate}
\end{lemma}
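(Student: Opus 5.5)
For \ref{item: G and F extra 1} I would argue directly from the definitions. Let $P(\gamma)\in\mathcal{G}_\delta^{\Phi}\big(R(\gamma_0)\big)$. Since $\pi P(\gamma)$ must exist, $P(\gamma)\neq R(\gamma_0)$, so $P(\gamma)\in\mathcal{D}_m\big(R(\gamma_0)\big)$ for some $m\geq 1$. As $0<\delta<1$, we have $\Lambda=\delta|\mathcal{M}(R^\Phi(\gamma_0))|\leq|\mathcal{M}(R^\Phi(\gamma_0))|$. Lemma \ref{lem: finite termination} therefore shows that $k=\tau_\Lambda\big(P(\gamma)\big)\in\{1,\dots,m\}$ is well defined. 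Taking $i=0\leq k-1$ in (\ref{eq: termination collections}) gives $P(\gamma)=\pi_0^+P(\gamma)\in\mathcal{S}_k$.

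For \ref{item: G and F extra 2} I would write $P(\gamma)=\pi_i^+P_0(\gamma')$ with $P_0(\gamma')\in\mathcal{G}_\delta^\Phi$ and $0\leq i\leq\tau_\Lambda(P_0)-1$. If $i=0$, the claim is the defining property $P_0\cap F_\delta^\Phi=\emptyset$ of $\mathcal{G}_\delta^\Phi$. If $i\geq1$, the key observation is that $i<\tau_\Lambda(P_0)$ gives, for every $1\leq j\leq i$ and every $\theta\in[\phi-\theta_0,\Phi-\theta_0]$,
\begin{align*}
|\mathcal{M}\big(\pi_j^+P_0^\theta(\gamma')\big)|<\Lambda.
\end{align*}
By Lemma \ref{lem: parameters}\ref{item: parameters 2} the admissible range contains $\theta=0$. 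Negative $\theta$ down to $-\lceil 4\theta_0/(2^{dp}-1)\rceil$ are also admissible by \ref{item: parameters 5}.

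Now suppose $P\cap Q\neq\emptyset$ for some $Q(\alpha)\in\mathcal{F}_\delta^\Phi$. Both rectangles lie in the extended lattice $\mathcal{D}^{\textnormal{ext}}(R(\gamma_0))$, so nestedness gives $Q\subseteq P$ or $P\subsetneq Q$. In the first case $Q$ is an $E$-free element of $\mathcal{D}(P)$. Hence $|\mathcal{M}(P)|\geq|Q|\geq\delta|\mathcal{M}(R^\Phi(\gamma_0))|=\Lambda$, which contradicts the displayed bound with $j=i$, $\theta=0$.

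The second case, $P\subsetneq Q$, is the main obstacle, because $P$ may be much smaller than $Q$. Here I would pass to the first time the chain enters $Q$. Let $j\leq i$ be minimal with $\pi_j^+P_0\cap Q\neq\emptyset$. We have $j\geq1$ because $P_0\cap Q=\emptyset$. By the first case applied at level $j$, we get $\pi_j^+P_0\subseteq Q$, while $U=\pi_{j-1}^+P_0$ is disjoint from $Q$.
\begin{itemize}
\item By Lemma \ref{lem: bounded distance}, the genuine parent $\pi U$ (the translate $\theta=-\theta_0$ of $\pi_j^+P_0$) and the translates with $\theta\in[\phi-\theta_0,0]$ lie within a controlled number of steps $l_t(\pi_j^+P_0)$ of $\pi_j^+P_0$.
\item Using \ref{item: parameters 4}–\ref{item: parameters 5}, I would show that some translate $\pi_j^+P_0^\theta$ with $\theta$ in the search range straddles $\partial Q$.
\item Alternatively, I would show that $Q$, being a dyadic ancestor of $\pi_j^+P_0$ of comparable or larger size, is contained in one of these translates or in its parent column. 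This would give $|\mathcal{M}(\pi_j^+P_0^\theta)|\geq\Lambda$ or a size comparison $|\pi_j^+P_0|\gtrsim|Q|\geq\Lambda$.
\end{itemize}
Either outcome contradicts the bound above. Making this geometric step precise, with the parameter inequalities of Lemma \ref{lem: parameters}, is where I expect the real work to lie.
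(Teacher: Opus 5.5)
Your part (i), your case $i=0$, and your case $Q\subseteq P$ in part (ii) are correct and coincide with the paper's argument. The gap is the remaining case $P=\pi_i^+P_0\subsetneq Q$ with $i\geq 1$, which you leave open, and the route you sketch for it does not work in general. If $Q$ is much larger than the chain, every translate $\pi_j^+P_0^\theta$ with $\theta\in[\phi-\theta_0,\Phi-\theta_0]$ may lie inside $Q$. In that situation no translate in the search range straddles $\partial Q$, and the comparison $|\pi_j^+P_0|\gtrsim|Q|$ is simply false. What you need is a lower bound on the size of the chain that does not involve $Q$ at all. It comes from the definition of $\mathcal{G}_\delta^\Phi$, of which you have only used $P_0\cap F_\delta^\Phi=\emptyset$ and not the condition on the parent.

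The missing observation is that $|P_0|\geq\Lambda$ for every $P_0\in\mathcal{G}_\delta^\Phi$. Indeed, $\pi P_0$ meets some $S\in\mathcal{F}_\delta^\Phi$. By nestedness, $\pi P_0\subseteq S$ is impossible, since it would give $P_0\subseteq S$. Hence $S\subsetneq\pi P_0$, so $S$ lies inside some child of $\pi P_0$. By similarity, all children of $\pi P_0$ are translates of $P_0$. Therefore $|P_0|\geq|S|\geq\delta|\mathcal{M}(R^\Phi(\gamma_0))|=\Lambda$.

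With this, the case $P\subseteq Q$ is immediate. The rectangle $P$ is $E$-free, so $\mathcal{M}(P)=P$. Moreover, $P$ is a translate of $\pi_iP_0\supseteq P_0$, so $|\mathcal{M}(P)|=|P|\geq|P_0|\geq\Lambda$. Since $\theta=0$ lies in the search range, this gives $\tau_\Lambda(P_0)\leq i$, contradicting $i\leq\tau_\Lambda(P_0)-1$. This is exactly how the paper argues. No first-entry index, no translates, and no use of items (iv) and (v) of Lemma \ref{lem: parameters} are needed.
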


\begin{proof}
     \ref{item: G and F extra 1} Let $R(\gamma_0) \subseteq \Rn$ be a parabolic rectangle with $0 \leq \gamma_0\leq 1/2$. Consider the collections $\mathcal{F}_\delta^\Phi = \mathcal{F}_\delta^\Phi\big(R(\gamma_0)\big)$ and $\mathcal{G}_\delta^\Phi = \mathcal{G}_\delta^\Phi\big(R(\gamma_0)\big)$, given that $E$ is $(c,\delta,\Phi)$-weakly porous. We may assume that $R(\gamma_0)$ is not $E$-free, since then $\mathcal{G}_\delta^\Phi = \emptyset$ and there is nothing to prove.
     
     Fix an integer $m\geq 1$ and choose $P(\gamma) \in \mathcal{G}_\delta^\Phi \cap \mathcal{D}_m\big(R(\gamma_0)\big)$ with $0 \leq \gamma \leq 1/2$. As $\Lambda = \delta |\mathcal{M}\big(R^\Phi(\gamma_0)\big)| \leq |\mathcal{M}\big(R^\Phi(\gamma_0)\big)|$, by Lemma \ref{lem: finite termination}
     \begin{align*}
         1 \leq \tau_\Lambda\big(P(\gamma)\big) \leq m < \infty.
     \end{align*}
     Then, naturally, by definition (\ref{eq: termination collections})
     \begin{align*}
        P(\gamma) = \pi_0^+P(\gamma) \in \mathcal{S}_k
    \end{align*}
    for some $k\leq m$, which proves the claim. 
     
    \ref{item: G and F extra 2} We may again assume that $R(\gamma_0)$ is not $E$-free, since then $\mathcal{G}_\delta^\Phi = \emptyset$. For any fixed $k=1,2,\dots$ choose a rectangle $P(\alpha) \in \mathcal{S}_k$ with $0 \leq \alpha \leq 1/2$. Clearly, $\mathcal{G}_\delta^\Phi$ and $\mathcal{F}_\delta^\Phi$ are collections of mutually pairwise disjoint rectangles, so we may assume $P(\alpha) \notin \mathcal{G}_\delta^\Phi$. The proof would otherwise be trivial.
     
     By (\ref{eq: termination collections}) we can write $P(\alpha)$ as some forward-in-time parent of a base rectangle in $\mathcal{G}_\delta^\Phi$. In particular,
     \begin{align*}
         P(\alpha) = \pi_i^+P_0(\alpha_0)
     \end{align*}
     for $P_0(\alpha_0) \in \mathcal{G}_\delta^\Phi$ with $0 \leq \alpha_0 \leq 1/2$ and 
     \begin{align}
         i = 1,\dots, \tau_\Lambda\big(P_0(\alpha_0)\big)-1. \label{eq: G and F extra indeces}
     \end{align}
     If we allowed $i =0$, then $ P(\alpha) = P_0(\alpha_0) \in \mathcal{G}_\delta^\Phi$, which is the case we already covered. Based on (\ref{eq: G and F extra indeces}), we will prove by contradiction that $\pi_i^+P_0(\alpha_0)$ cannot intersect any $Q(\beta) \in \mathcal{F}_\delta^\Phi$ with $0 \leq \beta \leq 1/2$. Let us assume that there exists some $\delta$-admissible $Q(\beta) \in \mathcal{F}_\delta^\Phi$ such that 
    \begin{align*}
        \pi_i^+P_0(\alpha_0) \cap Q(\beta) \neq \emptyset.
    \end{align*}
    Since $\pi_i^+P_0(\alpha_0),Q(\beta) \in \mathcal{D}^{\textnormal{ext}}\big(R(\gamma)\big)$, then by the nestedness of the dyadic lattice either 
    \begin{align*}
        \pi_i^+P_0(\alpha_0) \subseteq Q(\beta) \quad \text{or} \quad Q(\beta) \subseteq \pi_i^+P_0(\alpha_0).
    \end{align*}
    We study these two cases separately.

    In the first case, that is, $\pi_i^+P_0(\alpha_0) \subseteq Q(\beta)$ it is clear that $\pi_i^+P_0(\alpha_0)$ is $E$-free. Furthermore, the measure of the forward-in-time parent follows the dyadic scaling as
    \begin{align*}
        |\pi_i^+P_0(\alpha_0)| = |\pi_i P_0(\alpha_0)| \geq \frac{1}{2}\cdot 2^{d(n+p)i}|P_0(\alpha_0)| \geq |P_0(\alpha_0)|.
    \end{align*}
    On the other hand, $P_0(\alpha_0) \in \mathcal{G}_\delta^\Phi$ which means $\pi P_0(\alpha_0) \cap S(\beta_0) \neq \emptyset$ for some $\delta$-admissible $S(\beta_0) \in \mathcal{F}_\delta^\Phi$ with $0 \leq \beta_0 \leq 1/2$. The nestedness of the dyadic lattice implies that $S(\beta_0) \subset \pi P_0(\alpha_0)$. The inclusion to the other direction can be excluded since else $P_0(\alpha_0) \subseteq \pi P_0(\alpha_0) \subseteq S(\beta_0)$ contradicting the definition of $\mathcal{G}_\delta^\Phi$. Thus, we have the lower bound for the measure of $P_0(\alpha_0)$ as
    \begin{align*}
        |P_0(\alpha_0)| \geq |S(\beta_0)|\geq  \delta |\mathcal{M}\big(R^\Phi(\gamma_0)\big)|.
    \end{align*}
    The combination of the previous arguments shows that $\pi_i^+P_0(\alpha_0)$ is large enough $E$-free rectangle, satisfying
    \begin{align*}
        |\mathcal{M}\big(\pi_i^+P_0(\alpha_0)\big)| = |\pi_i^+P_0(\alpha_0)| \geq \delta |\mathcal{M}\big(R^\Phi(\gamma_0)\big)| = \Lambda.
    \end{align*}
    
    We study then the the second case, that is, $Q(\beta) \subseteq \pi_i^+P_0(\alpha_0)$. Since $Q(\beta)$ is $\delta$-admissible, we directly get
    \begin{align*}
        |\mathcal{M}\big(\pi_i^+P_0(\alpha_0)\big)| \geq |\mathcal{M}\big(Q(\beta)\big)| \geq \delta |\mathcal{M}\big(R^\Phi(\gamma_0)\big)| = \Lambda,
    \end{align*}
    proving in both cases that
    \begin{align*}
         |\mathcal{M}\big(\pi_i^+P_0(\alpha_0)\big)| \geq  \Lambda.
    \end{align*}

    To show the contradiction, we analyze whether the stopping time $\tau_\Lambda\big(P_0(\alpha_0)\big)$ finds this large enough $E$-free region. By Lemma \ref{lem: parameters}\ref{item: parameters 2}, the search range parameters establish an interval such that $0 \in [\phi -\theta_0, \Phi -\theta_0]$, and since $\pi_i^+P_0(\alpha_0) = \pi_i^+P_0^0(\alpha_0)$, by the definition (\ref{eq: termination number}) we have an upper bound
    \begin{align*}
        \tau_\Lambda\big(P_0(\alpha_0)\big) \leq i.
    \end{align*}
    However, this upper bound is a contradiction with (\ref{eq: G and F extra indeces}) as
    \begin{align*}
        i \leq \tau_\Lambda\big(P_0(\alpha_0) \big) -1 \leq i -1,
    \end{align*}
    and thus $\pi_i^+P_0(\alpha_0)$ does not intersect any $Q(\beta) \in \mathcal{F}_\delta^\Phi$. The proof is complete.
\end{proof}

\subsection{Exponential estimate}
The final result of this section is the exponential estimate. Thanks to the appropriate stopping time combined with the doubling features of parabolic weak porosity, there is a certain type of exponential decay between the measures of the unions of the collections $\mathcal{S}_k\big(\mathcal{G}_\delta^\Phi\big(R(\gamma_0)\big),\Lambda\big)$.  We prove this result in three parts.

\begin{lemma} \label{lem: exponential decay} Let $E$ be $(c,\delta,\Phi)$-weakly porous for some $c,\delta \in (0,1)$. For any parabolic rectangle $R(\gamma_0) \subseteq \Rn$ with $0\leq \gamma_0 \leq 1/2$ consider $\mathcal{S}_k = \mathcal{S}_k\big(\mathcal{G}_\delta^\Phi\big(R(\gamma_0)\big),\Lambda\big)$ with $\Lambda = \delta|\mathcal{M}\big(R^\Phi(\gamma_0)\big)|$. Then, for every $k =1,2,\dots$ the following are true:
    \begin{enumerate}[label=(\roman*)]
        \item \label{item: exponential decay 1} Consider any nonempty subcollection $\mathcal{C} \subseteq \mathcal{S}_{k}$ such that $\mathcal{C} \subseteq \mathcal{D}_1\big(\pi P(\gamma)\big)$ for every $P(\gamma) \in \mathcal{C}$ with $0 \leq \gamma \leq 1/2$. Then, $\mathcal{C}$ is a proper subset of $\mathcal{D}_1\big(\pi P(\gamma)\big)$.

        \item \label{item: exponential decay 2}The collection
        \begin{align*}
            \pi^+ \mathcal{S}_{k+1} = \big\{\pi^+ P(\gamma) \,:\, P(\gamma) \in \mathcal{S}_{k+1} \big\}
        \end{align*}
        consists of pairwise disjoint rectangles.
    
        \item There exists $\lambda = \lambda(n,p,d) \in (0,1)$ such that
        \begin{align*}
            \Big|\bigcup_{Q(\alpha) \in \mathcal{S}_{k+1}}Q(\alpha) \Big| \leq \lambda^k \Big|\bigcup_{P(\gamma) \in \mathcal{S}_{1}}P(\gamma)\Big|.
        \end{align*} \label{item: exponential decay 3}
    \end{enumerate}
\end{lemma}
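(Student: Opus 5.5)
I would prove the three items in order, with (iii) as the real goal: it should follow from (i), (ii) and Lemma \ref{lem: parent sets}. Throughout, write $\mathcal{S}_k=\mathcal{S}_k\big(\mathcal{G}_\delta^\Phi(R(\gamma_0)),\Lambda\big)$ with $\Lambda=\delta|\mathcal{M}(R^\Phi(\gamma_0))|$. By Lemma \ref{lem: finite termination}, all stopping times appearing below are defined.

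\emph{Item (i).} I would argue by contradiction. Suppose $\mathcal{C}=\mathcal{D}_1(Q)$ with $Q=\pi P(\gamma)$. Then every child of $Q$ has stopping time $k$, so every child's chain fails to see a hole of size $\Lambda$ in the search windows at levels $1,\dots,k-1$. The idea is that the children of $Q$ sit at all temporal positions inside $Q$, and in particular $\lceil 2^{dp}\rceil$ or $\lfloor 2^{dp}\rfloor$ of them are stacked in time. The forward-in-time parents $\pi^+ P'=\pi P'^{\theta_0}=Q^{\theta_0}$ are the same for all children $P'$. Their translated search windows $\pi_j^+P'^\theta$, $\theta\in[\phi-\theta_0,\Phi-\theta_0]$, therefore coincide, so they give nothing new on their own.

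The content must instead come from the base $\mathcal{G}$-rectangles generating these chains. Some child lies in the lower part of $Q$, and the conditions of Lemma \ref{lem: parameters}\ref{item: parameters 3}--\ref{item: parameters 5} together with Lemma \ref{lem: bounded distance} guarantee the following. Either the search window of a lower child, shifted by $\theta\in[\phi-\theta_0,\Phi-\theta_0]$, covers the region where a sibling's generating chain found (or was forbidden from containing) an $F_\delta^\Phi$-rectangle. Or all of $Q$ is inside the union of chains, which forces a size-$\Lambda$ hole to be detected one level earlier for some child. That contradicts $\tau_\Lambda=k$ holding for all of them.

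Concretely, I would use Lemma \ref{lem: G and F extra}\ref{item: G and F extra 2}: no rectangle of $\mathcal{S}_k$ meets $F_\delta^\Phi$. Together with the fact that each chain starts at a $\mathcal{G}$-rectangle whose parent meets $F_\delta^\Phi$, this should show that at least one child of $Q$ cannot be in $\mathcal{S}_k$. I expect this item to be the main obstacle, because the bookkeeping of temporal windows via Lemma \ref{lem: parameters} is delicate. I would first try the case where the child in question is the lowest temporal row of $Q$, and use \ref{item: parameters 4} to keep its search window inside $Q$'s future.

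\emph{Item (ii).} Let $P,P'\in\mathcal{S}_{k+1}$. By nestedness, $\pi^+P$ and $\pi^+P'$ are either disjoint or nested. If they are nested but unequal, then Lemma \ref{lem: parent sets} puts both in $\mathcal{S}_k$, and the smaller one is a forward-in-time ancestor-level rectangle of the same chain structure. Comparing stopping times via the identity $\tau_\Lambda(\pi_i^+P)=\tau_\Lambda(P)-i$ from the proof of Lemma \ref{lem: parent sets}, nested rectangles of different sizes would have stopping times differing by at least one, contradicting that both equal $k$. So the parents are equal or disjoint, and the collection of distinct parents is pairwise disjoint.

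\emph{Item (iii).} I would group $\mathcal{S}_{k+1}$ according to $\pi^+P$. By (ii), these groups correspond to pairwise disjoint rectangles $Q^{\theta_0}$ belonging to $\mathcal{S}_k$ (Lemma \ref{lem: parent sets}). All members of a group are children of the same $\pi P$, so by (i) they miss at least one child. Hence their union has measure at most $(1-\kappa)|\pi P|$, where $\kappa\ge \tfrac14 2^{-d(n+p)}$ follows from the comparability to the parent. Since $|\pi P|=|\pi^+P|$, this gives
\[
\Big|\bigcup_{\mathcal{S}_{k+1}}Q\Big|\le \lambda\,\Big|\bigcup_{\mathcal{S}_k}P\Big|
\qquad\text{with } \lambda=1-\kappa=\lambda(n,p,d)\in(0,1),
\]
where the disjointness from (ii) is used on the right-hand side. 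Iterating this from $\mathcal{S}_1$ yields the claimed bound $\lambda^k$.
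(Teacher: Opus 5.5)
Your item (iii) is the paper's argument and is fine once (i) and (ii) are available. Neither of those is actually established, however.

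\textbf{Item (ii).} The key step fails. Suppose $Q\subsetneq P$ both lie in $\pi^+\mathcal{S}_{k+1}$. Then $P=\pi_rQ$ is a \emph{standard} ancestor of $Q$, whereas $\pi_r^+Q=(\pi_rQ)^{\theta'}$ for an integer $\theta'\ge\theta_0\ge2$, and this rectangle is disjoint from $P$. So $P$ is not on the forward chain of $Q$, and your claim that it is a forward-in-time ancestor ``of the same chain'' is false. The identity $\tau_\Lambda(\pi_i^+X)=\tau_\Lambda(X)-i$ holds only along one chain and only for $i\le\tau_\Lambda(X)-1$. It therefore says nothing about $\tau_\Lambda(P)$ versus $\tau_\Lambda(Q)$, and nothing forces nested rectangles to have different stopping times.

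The disjointness comes instead from $\mathcal{F}_\delta^\Phi$. Write $Q=\pi_j^+Q_0$ and $P=\pi_i^+P_0$ with $Q_0,P_0\in\mathcal{G}_\delta^\Phi$, $j\ge1$ and $1\le i\le\tau_\Lambda(P_0)-1$. Some $S\in\mathcal{F}_\delta^\Phi$ satisfies $S\subseteq\pi Q_0\subseteq\pi_jQ_0=Q^{\theta_1}$ with $\theta_1\le-\theta_0$. Since $Q\subset P$, one has $Q^{\theta_1}\subseteq P^{\theta}$ for an integer $\theta$. Lemma~\ref{lem: bounded distance} places $\theta$ in $[-\lceil 4\theta_0/(2^{dp}-1)\rceil,0]$, which lies in $[\phi-\theta_0,\Phi-\theta_0]$ by Lemma~\ref{lem: parameters}\ref{item: parameters 2} and \ref{item: parameters 5}. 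Thus $|\mathcal{M}(\pi_i^+P_0^{\theta})|\ge|S|\ge\Lambda$, so $\tau_\Lambda(P_0)\le i$, a contradiction.

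\textbf{Item (i).} You correctly note that all children of $\pi P$ share the same search windows, so the contradiction must come from the base rectangles. But you stop at a disjunction of heuristics. Your hint about a window in $Q$'s future also points the wrong way: the region that must be hit lies in $Q$ or its past. The paper splits into two cases.

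If some child of $\pi P$ lies in $\mathcal{G}_\delta^\Phi$, then $\pi P$ meets some $S\in\mathcal{F}_\delta^\Phi$, hence some child does. Every child is in $\mathcal{S}_k$, so this contradicts Lemma~\ref{lem: G and F extra}\ref{item: G and F extra 2}.

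Otherwise, take the top and bottom children $U=L^{\theta_1}$ of one column, with $\theta_1\in\{\lfloor 2^{dp}\rfloor-1,\lceil 2^{dp}\rceil-1\}$. Write $U=\pi_i^+U_0$ and $L=\pi_j^+L_0$ with $i,j\ge1$ and $j\le\tau_\Lambda(L_0)-1$. The $\mathcal{F}_\delta^\Phi$-rectangle attached to $U_0$ lies in $\pi U_0\subseteq\pi_iU_0=U^{-\theta_2}$, in the \emph{past} of $U$. Here $\theta_0\le\theta_2\le 2\theta_0+\lceil 2\theta_0/(2^{dp}-1)\rceil$ by Lemma~\ref{lem: bounded distance}. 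Now $U^{-\theta_2}=L^{\theta_1-\theta_2}=\pi_j^+L_0^{\theta_1-\theta_2}$, and Lemma~\ref{lem: parameters}\ref{item: parameters 3} and \ref{item: parameters 4} give $\theta_1-\theta_2\in[\phi-\theta_0,\Phi-\theta_0]$. Hence $\tau_\Lambda(L_0)\le j$, contradicting $L\in\mathcal{S}_k$.

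The idea your sketch is missing is this transfer of the top child's $\mathcal{F}_\delta^\Phi$-rectangle into the bottom child's level-$j$ search window.
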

\begin{remark}
    For item \ref{item: exponential decay 1}, there actually exists at least $2^n$ rectangles $Q(\gamma) \in \mathcal{D}_1\big(P(\gamma)\big)$ such that $Q(\gamma) \notin \mathcal{C}$, which would improve $\lambda$ from item \ref{item: exponential decay 3} to some degree. However, any $\lambda <1$ is satisfactory for our purposes.
\end{remark}

\begin{proof}
    \ref{item: exponential decay 1} Let $R(\gamma_0) \subseteq \Rn$ be a parabolic rectangle with $0\leq \gamma_0 \leq 1/2$. Consider the collections $\mathcal{F}_\delta^\Phi = \mathcal{F}_\delta^\Phi\big(R(\gamma_0)\big)$ and $\mathcal{G}_\delta^\Phi = \mathcal{G}_\delta^\Phi\big(R(\gamma_0)\big)$, given that $E$ is $(c,\delta,\Phi)$-weakly porous. We may assume that $R(\gamma_0)$ is not $E$-free, since then $\mathcal{G}_\delta^\Phi = \emptyset$ and there is nothing to prove. For a fixed $k = 1,2,\dots$ take any nonempty subcollection $\mathcal{C} \subseteq \mathcal{S}_k$ such that $\mathcal{C} \subseteq \mathcal{D}_1\big(\pi P(\gamma)\big)$ for every $P(\gamma) \in \mathcal{C}$ with $0 \leq \gamma \leq 1/2$. We prove the first claim by contradiction. Thus, we assume that $\mathcal{C} = \mathcal{D}_1\big(\pi P(\gamma)\big)$. We split the proof into two different cases.
    
    In the first case, we assume that there exists at least one $P(\gamma) \in \mathcal{C} \cap \mathcal{G}_\delta^\Phi$. Since $ \mathcal{G}_\delta^\Phi$ is the collection of maximal rectangles that are pairwise disjoint with every rectangle in $\mathcal{F}_\delta^\Phi$, we have
    \begin{align*}
        \pi P(\gamma) \cap S(\beta) \neq \emptyset.
    \end{align*}
    for some $S(\beta)\in \mathcal{F}_\delta^\Phi$ with $0\leq \beta \leq 1/2$. Because $\mathcal{C} = \mathcal{D}_1\big(\pi P(\gamma)\big)$, then there exists some $Q(\gamma) \in \mathcal{C} \subseteq \mathcal{S}_k$ such that
    \begin{align*}
        Q(\gamma) \cap S(\beta) \neq \emptyset.
    \end{align*}
    However, this is a contradiction by Lemma \ref{lem: G and F extra}\ref{item: G and F extra 2}.
    
    We then study the other case. Now, we assume that $\mathcal{C} \cap \mathcal{G}_\delta^\Phi = \emptyset$. Let us concentrate on uppermost and lowermost rectangles of $\mathcal{C}$, and denote them by $U(\gamma),L(\gamma) \in \mathcal{C}$ respectively. In this case, $U(\gamma),L(\gamma) \notin \mathcal{G}_\delta^\Phi$. Moreover, we choose them such that they are from the same column, that is,
    \begin{align}
        U(\gamma) = L^{\theta_1}(\gamma),  \label{eq: exponential decay alignment}
    \end{align}
    where $\theta_1 \in \big\{\big\lfloor2^{dp}\big\rfloor -1, \big\lceil2^{dp}\big\rceil -1 \big\}$, depending on how the dyadic division was performed. Since $\mathcal{C} \subseteq \mathcal{S}_k$, we must be able to write both $U(\gamma)$ and $L(\gamma)$ as forward-in-time parents of some base rectangles as
    \begin{align*}
        U(\gamma) = \pi_i^+U_0(\alpha_0) \quad \text{and} \quad L(\gamma) = \pi_j^+L_0(\beta_0).
    \end{align*}
    for some $U_0(\alpha_0),L_0(\beta_0) \in \mathcal{G}_\delta^\Phi$ with $\alpha_0,\beta_0 \in [0,1/2]$ and
    \begin{align}
        i = 1,\dots, \tau_\Lambda\big(U_0(\alpha_0)\big)-1, \quad \text{and} \quad j = 1,\dots, \tau_\Lambda\big(L_0(\beta_0)\big)-1. \label{eq: exponential decay indeces 1}
    \end{align}
    If we allowed $i =0$ or $j=0$, then $ U(\gamma) = U_0(\alpha_0) \in \mathcal{G}_\delta^\Phi$ or $ L(\gamma) = L_0(\beta_0) \in \mathcal{G}_\delta^\Phi$ which is the case we already covered. With these assumptions, we are going to show a contradiction based on (\ref{eq: exponential decay indeces 1}).

    Due to the maximality of $\mathcal{G}_\delta^\Phi$, and $U_0(\alpha_0) \in \mathcal{G}_\delta^\Phi$, there exists some $\delta$-admissible $S(\beta') \in \mathcal{F}_\delta^\Phi$ with $0 \leq \beta'\leq 1/2$ such that $U_0(\alpha_0) \cap S(\beta') \neq \emptyset$. By the nestedness of the dyadic lattice, we have
    \begin{align*}
        \quad S(\beta') \subset \pi U_0(\alpha_0) \quad \text{or} \quad \pi U_0(\alpha_0) \subseteq S(\beta').
    \end{align*}
    The latter case is clearly impossible by the definition of $\mathcal{G}_\delta^\Phi$, since then $U_0(\alpha_0)$ would intersect $S(\beta')$. Thus, we have
    \begin{align*}
        S(\beta') \subset \pi U_0(\alpha_0) \subseteq \pi_iU_0(\alpha_0) = U^{-\theta_2}(\gamma)
    \end{align*}
    for some integer $ \theta_2 \geq \theta_0$. Since $U(\gamma)$ and $L(\gamma)$ are of the same column, by (\ref{eq: exponential decay alignment}) we can write
    \begin{align*}
        \pi_j^+ L_0^{\theta}(\beta_0) = L^\theta(\gamma) = U^{-\theta_2}(\gamma)
    \end{align*}
    for $\theta = \theta_1 -\theta_2 $. Thus, the maximal $E$-free region of $\pi_j^+ L_0^\theta(\beta_0)$ is bounded below by
    \begin{align}
        |\mathcal{M}\big(\pi_j^+ L_0^\theta(\beta_0)\big)| \geq |S(\beta')| \geq \delta |\mathcal{M}\big(R^\Phi(\gamma_0)\big)| = \Lambda. \label{eq: exponential decay maximal hole 1}
    \end{align}

    We then study $\theta$ by estimating $\theta_2$. By Lemma \ref{lem: bounded distance}, temporal distance between $U^{-\theta_2}(\gamma)$ and $U(\gamma)$ is bounded by
    \begin{align*}
        \dist_t\big(\partial_{\textnormal{low}}U^{-\theta_2}(\gamma),\partial_{\textnormal{low}} U(\gamma)\big) &= \dist_t\big(\partial_{\textnormal{low}}\pi_i U_0(\alpha_0),\partial_{\textnormal{low}}\pi_i^+ U_0(\alpha_0)\big) \\
        &\leq \dist_t\big(\partial_{\textnormal{low}}U_0(\alpha_0),\partial_{\textnormal{low}}\pi_i^+U_0(\alpha_0)\big) + l_t\big(\pi_i U_0(\alpha_0)\big)\\
        &<2\theta_0\frac{2^{dp}}{2^{dp}-1}l_t\big(\pi_i^+ U_0(\alpha_0)\big) + l_t\big(\pi_i U_0(\alpha_0)\big) \\
        &\leq \bigg(2\theta_0 + \bigg\lceil\frac{2\theta_0}{2^{dp}-1} \bigg\rceil +1 \bigg)l_t\big( U(\gamma)\big).
    \end{align*}
    Since $\theta_2$ is an integer, the strict inequality above tells us that necessarily
    \begin{align*}
        \theta_0 \leq \theta_2 \leq 2 \theta_0 + \bigg\lceil\frac{2\theta_0}{2^{dp}-1} \bigg\rceil.
    \end{align*}
    Using the upper and lower bound of $\theta_2$, we restrict $\theta$ on an interval
    \begin{align*}
        \theta_1 - 2\theta_0 -\bigg\lceil\frac{2\theta_0}{2^{dp}-1} \bigg\rceil \leq \theta \leq \theta_1 - \theta_0.
    \end{align*}
    Recall  $\theta_1 \in \big\{\big\lfloor2^{dp}\big\rfloor -1, \big\lceil2^{dp}\big\rceil -1 \big\}$, which implies 
    \begin{align*}
        \big\lfloor2^{dp}\big\rfloor -1 - 2\theta_0 -\bigg\lceil\frac{2\theta_0}{2^{dp}-1} \bigg\rceil \leq \theta \leq  \big\lceil2^{dp}\big\rceil -1 - \theta_0.
    \end{align*}
    Lemma \ref{lem: parameters}\ref{item: parameters 3} and \ref{item: parameters 4} states that the above implies
    \begin{align*}
        \phi - \theta_0 \leq \theta \leq \Phi - \theta_0.
    \end{align*}
    
     Finally, we check the stopping time. The bounds on $\theta$ are appropriate and by (\ref{eq: exponential decay maximal hole 1}) 
    \begin{align*}
        |\mathcal{M}\big(\pi_j^+ L_0^\theta(\beta_0)\big)| \geq  \Lambda.
    \end{align*}
    Thus, $\tau_\Lambda\big(L_0(\beta_0)\big) \leq j$ by definition (\ref{eq: termination number}). However, by (\ref{eq: exponential decay indeces 1}) we have
    \begin{align*}
        j \leq \tau_\Lambda\big(L_0(\beta_0)\big) -1 \leq j-1, 
    \end{align*}
    which is a contradiction, proving the claim.

    \ref{item: exponential decay 2} We show that $\pi^+\mathcal{S}_{k+1}$ is a collection of pairwise disjoint rectangles with a proof by a contradiction. For some fixed $k=1,2,\dots$, let $P(\alpha),Q(\beta) \in \pi^+\mathcal{S}_{k+1}$ with $\alpha,\beta \in [0,1/2]$ be intersecting rectangles. Without loss of generality we may assume $Q(\beta) \subset P(\alpha)$. By the definition of $\pi^+ \mathcal{S}_{k+1}$, we can write $P(\alpha)$ and $Q(\beta)$ as some forward-in-time parents of some base rectangles as
    \begin{align*}
        P(\alpha) = \pi_i^+P_0(\alpha_0) \quad \text{and} \quad Q(\beta) = \pi_j^+Q_0(\beta_0)
    \end{align*}
    for some $P_0(\alpha_0),Q_0(\beta_0) \in \mathcal{G}_\delta^\Phi$ with $\alpha_0,\beta_0 \in [0,1/2]$ and 
    \begin{align}
        i =1,\dots,\tau_\Lambda\big(P_0(\alpha_0)\big) -1 \quad \text{and} \quad j =1,\dots,\tau_\Lambda\big(Q_0(\beta_0)\big) -1. \label{eq: exponential decay indeces 2}
    \end{align}
    The indices $i$ and $j$ cannot be zero, since $\pi^+ \mathcal{S}_{k+1}$ includes taking one forward-in-time parent of any rectangle in $\mathcal{S}_{k+1}$.

    Since $Q_0(\beta_0) \in \mathcal{G}_\delta^\Phi$, there exists a $\delta$-admissible $S(\gamma) \in \mathcal{F}_\delta^\Phi$ with $0 \leq \gamma \leq 1/2$ such that 
    \begin{align*}
        S(\gamma) \subseteq \pi Q_0(\beta_0) \subseteq \pi_j Q_0(\beta_0) =  Q^{\theta_1}(\beta)
    \end{align*}
    for some integer $\theta_1 \leq -\theta_0$. Moreover, since $Q(\beta) \subset P(\alpha)$ are spatially aligned, there exists an integer $\theta \leq 0$ such that 
    \begin{align*}
        Q^{\theta_1}(\beta) \subset P^\theta (\alpha) = \pi_i^+ P_0^{\theta}(\alpha_0).
    \end{align*}
    Thus, the maximal $E$-free hole in $\pi_i^+ P_0^\theta(\alpha_0)$ is bounded from below by
    \begin{align}
        |\mathcal{M}\big(\pi_i^+ P_0^\theta(\alpha_0)\big)| \geq |S(\gamma)| \geq \delta |\mathcal{M}\big(R^\Phi(\gamma_0)\big)| = \Lambda. \label{eq: exponential decay maximal hole 2}
    \end{align}
    
    We then study $\theta$.  Since  $ Q_0(\beta_0) \subseteq \pi_j Q_0(\beta_0) \subset P^{\theta}(\alpha)$ and $Q(\beta) \subset P(\alpha)$, by Lemma \ref{lem: bounded distance} the temporal distance between $P^{\theta}(\alpha)$ and $P(\alpha)$ satisfies
    \begin{align*}
         \dist_t\big(\partial_{\textnormal{low}}P^{\theta}(\alpha),\partial_{\textnormal{low}}P(\alpha)\big) & \leq \dist_t\big(\partial_{\textnormal{low}} Q_0(\beta_0),\partial_{\textnormal{low}} Q(\beta)\big) + l_t\big(P(\alpha)\big)\\
         &= \dist_t\big(\partial_{\textnormal{low}}Q_0(\beta_0),\partial_{\textnormal{low}}\pi_j^+Q_0(\beta_0)\big) + l_t\big( P(\alpha)\big)\\
        &<2\theta_0\frac{2^{dp}}{2^{dp}-1}l_t\big(\pi_j^+ Q_0(\beta_0)\big) + l_t\big(P(\alpha)\big) \\
        &\leq \bigg(\frac{4\theta_0}{2^{dp}-1}  \bigg) l_t\big(P(\alpha)\big) +  l_t\big( P(\alpha)\big)\\
        &\leq \bigg(\bigg\lceil\frac{4\theta_0}{2^{dp}-1} \bigg\rceil  +1 \bigg) l_t\big(P(\alpha)\big).
    \end{align*}
    We also used Corollary \ref{cor: dyadic scale} for $2^{dp}l_t\big(\pi_j^+Q_0(\beta_0)\big) \leq 2 l_t\big(P(\alpha)\big)$. Since $\theta$ is an integer, the strict inequality above tells us that necessarily
    \begin{align*}
        -\bigg\lceil\frac{4\theta_0}{2^{dp}-1} \bigg\rceil \leq \theta \leq 0.
    \end{align*}
    Lemma \ref{lem: parameters}\ref{item: parameters 2} and \ref{item: parameters 5} state that the above implies
    \begin{align*}
        \phi - \theta_0 \leq \theta \leq \Phi - \theta_0.
    \end{align*}
    
    Finally, we check the definition (\ref{eq: termination number}). The bounds on $\theta$ are appropriate and by (\ref{eq: exponential decay maximal hole 2}), we have 
    \begin{align*}
        |\mathcal{M}\big(\pi_i^+ P_0^\theta(\alpha_0)\big)| \geq  \Lambda.
    \end{align*}
    Thus, $\tau_\Lambda\big(P_0(\alpha_0)\big) \leq i$ by definition (\ref{eq: termination number}). However, by (\ref{eq: exponential decay indeces 2}) we have
    \begin{align*}
        i \leq \tau_\Lambda\big(P_0(\alpha_0)\big) -1 \leq i-1, 
    \end{align*}
    which is a contradiction, proving the claim.

    \ref{item: exponential decay 3} To prove the decay estimate, we utilize the previous parts. For a fixed $k=1,2,\dots$, we first observe that for every $P(\gamma) \in \pi^+ \mathcal{S}_{k+1}$ with $0\leq \gamma \leq 1/2$, there exists $Q(\alpha) \in \mathcal{S}_{k+1}$ with $0 \leq \alpha \leq 1/2$ such that
    \begin{align*}
        \pi^+Q(\alpha) = P(\gamma).
    \end{align*} 
    Equivalently, we can say $Q(\alpha) \in \mathcal{D}_1\big(P^{-\theta_0}(\gamma)\big) \cap \mathcal{S}_{k+1}$. Let us denote every such $Q(\alpha)$ by $\mathcal{C}\big(P(\gamma)\big) \subseteq \mathcal{D}_1\big(P^{-\theta_0}(\gamma)\big)$. Since $\pi Q(\alpha) = P^{-\theta_0}(\gamma)$
    for any $Q(\alpha) \in \mathcal{C}\big(P(\gamma)\big)$, the collection $\mathcal{C}\big(P(\gamma)\big)$ satisfies the requirements of item \ref{item: exponential decay 1}. We can write
    \begin{align*}
        \sum_{Q(\alpha) \in \mathcal{S}_{k+1}}|Q(\alpha)| =  \sum_{P(\gamma) \in \pi^+\mathcal{S}_{k+1}} \sum_{Q(\alpha) \in \mathcal{C}(P(\gamma))} |Q(\alpha)|.
    \end{align*}
    
    Depending on how the dyadic division was performed, notice that $\mathcal{D}_1\big(P(\gamma)\big)$ is made of $N \in \big\{2^{dn}\big\lfloor2^{dp}\big\rfloor, 2^{dn}\big\lceil2^{dp}\big\rceil \big\}$ rectangles. However, item \ref{item: exponential decay 1} implies that $\mathcal{C}\big(P(\gamma)\big)$ can have at most $N-1$ rectangles. This means
    \begin{align*}
        \sum_{P(\gamma) \in \pi^+\mathcal{S}_{k+1}} \sum_{Q(\alpha) \in \mathcal{C}(P(\gamma))} |Q(\alpha)| &\leq  \sum_{P^{-\theta_0}(\gamma) \in \pi^+\mathcal{S}_{k+1}} \frac{N-1}{N} |P(\gamma)| \\
        &\leq  \sum_{P(\gamma) \in \pi^+\mathcal{S}_{k+1}} \bigg(1-\frac{1}{2^{dn}\big\lceil2^{dp}\big\rceil}\bigg) |P(\gamma)|\\
        &= \lambda \sum_{P(\gamma) \in \pi^+\mathcal{S}_{k+1}} |P(\gamma)|,
    \end{align*}
    where $\lambda = \lambda(n,p) \in (0,1)$.  By Lemma \ref{lem: parent sets} $\pi^+ \mathcal{S}_{k+1}\subseteq \mathcal{S}_k$, while item \ref{item: exponential decay 2} implies $\pi^+ \mathcal{S}_{k+1}$ is a collection of pairwise disjoint rectangles. Hence, we obtain
    \begin{align*}
        \sum_{P(\gamma) \in \pi^+\mathcal{S}_{k+1}} |P(\gamma)| &= \Big|\bigcup_{P(\gamma) \in \pi^+\mathcal{S}_{k+1}} P(\gamma)\Big| \leq \Big|\bigcup_{P(\gamma) \in \mathcal{S}_{k}} P(\gamma)\Big|.
    \end{align*}
    Combining the above with the previous estimates yields
    \begin{align*}
        \sum_{Q(\alpha) \in \mathcal{S}_{k+1}}|Q(\alpha)| \leq \lambda \Big|\bigcup_{P(\gamma) \in \mathcal{S}_{k}} P(\gamma)\Big|,
    \end{align*}
    from which it is easy to deduce
    \begin{align*}
        \Big|\bigcup_{P(\gamma) \in \mathcal{S}_{k+1}}P(\gamma)\Big| \leq \lambda^{k} \Big|\bigcup_{P(\gamma) \in \mathcal{S}_1} P(\gamma)\Big|
    \end{align*}
    for every $k=1,2,\dots$, finishing the proof.
\end{proof}

\section{Full characterization of distance weights} \label{sec: full charac}
In this final section, we demonstrate the full characterization of parabolic Muckenhoupt distance weights via parabolic weakly porous sets. We first show the last missing direction of the characterization, and then gather the results into the main theorem of this paper.

\subsection{The $\alpha$-improvement of parabolic weakly porous sets}
We begin by demonstrating the $\alpha$-improvement of parabolic weakly porous sets. We show that if $E$ is $(c_0,\delta_0,\Phi)$-weakly porous for fixed $\Phi$ of Lemma \ref{lem: parameters}, then $E$ is $\alpha$-improving. We will bridge the $A_1^+(\gamma)$ distance weights and weakly porous sets using a fixed translation, and then show that the characterization holds for every translation with positive time-lag. The parameters $\theta_0$ and $\phi$ are also considered fixed to satisfy Lemma \ref{lem: parameters}.

\begin{lemma} \label{lem: wp => polynomial decay}
    Let $E \subseteq \Rn$ be an nonempty $(c_0,\delta_0,\Phi)$-weakly porous for $c_0,\delta_0 \in (0,1)$. Then, $E$ is also $\alpha$-improving for $\alpha = \alpha(n,p,d,c_0,\delta_0)>0$.
\end{lemma}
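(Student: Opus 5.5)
The plan is to verify the sequential criterion of Proposition \ref{prop: alpha improvement eqv}. I will produce $\delta_j=\delta_0(\kappa\sigma^{K})^{j}$ for fixed $K\in\N$ and $\kappa,\sigma\in(0,1)$, and show $1-c_j\le C\rho^j$ for some $\rho<1$. Since $\rho^j=(\delta_j/\delta_0)^{\alpha}$ with $\alpha=\ln\rho/\ln(\kappa\sigma^K)>0$, this gives the claim, and the ratio condition $\delta_{j+1}/\delta_j=\kappa\sigma^K$ is automatic.

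Fix $R(\gamma_0)$ and set $\Lambda=\delta_0|\mathcal{M}(R^\Phi(\gamma_0))|$. Up to the null set $E$ (Proposition \ref{prop: 0 measure}), the complement of $F_{\delta_0}^\Phi$ in $R(\gamma_0)$ is the disjoint union of the rectangles of $\mathcal{G}_{\delta_0}^\Phi(R(\gamma_0))$, and it has measure at most $(1-c_0)|R(\gamma_0)|$. By Lemma \ref{lem: G and F extra}\ref{item: G and F extra 1}, every $P(\gamma)\in\mathcal{G}_{\delta_0}^\Phi$ lies in some $\mathcal{S}_k=\mathcal{S}_k(\mathcal{G}_{\delta_0}^\Phi,\Lambda)$ with $k=\tau_\Lambda(P(\gamma))$. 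I split $\mathcal{G}_{\delta_0}^\Phi$ into \emph{long} rectangles ($\tau_\Lambda>K$) and \emph{short} ones ($\tau_\Lambda\le K$).

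\emph{Long rectangles.} Their union lies in $\bigcup\mathcal{S}_{K+1}$ together with the later collections. Lemma \ref{lem: exponential decay}\ref{item: exponential decay 3} bounds this by $\sum_{k>K}\lambda^{k-1}|\bigcup\mathcal{S}_1|$. Lemma \ref{lem: decay of interim space} with $\delta=\delta_0$ places all chain members inside $\bigcup_{\theta\in[0,\psi]}R^\theta(\gamma_0)$ with $\psi\le C$, so $|\bigcup\mathcal{S}_1|\le C|R(\gamma_0)|$. Hence the long rectangles have total measure at most $C\lambda^K|R(\gamma_0)|$.

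\emph{Short rectangles.} For $P(\gamma)$ with $\tau_\Lambda(P(\gamma))=k\le K$, there is $\theta\in[\phi-\theta_0,\Phi-\theta_0]$ with $|\mathcal{M}(\pi_k^+P^\theta(\gamma))|\ge\Lambda$. I then apply Theorem \ref{theo: doubling porosity} along the chain, in the form of Corollary \ref{cor: mh doubling}. At the last link I want to pass directly to pairwise disjoint $E$-free dyadic subrectangles of $P(\gamma)$, not merely to $\mathcal{M}(P(\gamma))$. The theorem supplies such rectangles covering a $c_0$-fraction of each child of the relevant ancestor, each of measure $\ge\sigma^{k}\Lambda$ with $\sigma=\sigma(n,p,d,c_0,\delta_0)$. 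Summing over the children of $P(\gamma)$ itself, exactly as in Corollary \ref{cor: translation}, gives a $c_0$-fraction of $P(\gamma)$ covered by $E$-free rectangles that are $\delta_0\sigma^K$-admissible relative to $R^\Phi(\gamma_0)$. They are therefore contained in $F^\Phi_{\delta_0\sigma^K}(R(\gamma_0))$. So one generation leaves an uncovered set of measure at most $\big((1-c_0)^2+C\lambda^K\big)|R(\gamma_0)|$ at admissibility level $\delta_1=\delta_0\sigma^K$.

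To iterate, I would run the same argument inside each short rectangle $P(\gamma)$, using its own complementary collection $\mathcal{G}^\Phi_{\delta_0}(P(\gamma))$ and threshold $\delta_0|\mathcal{M}(P^\Phi(\gamma))|$. I then need $|\mathcal{M}(P^\Phi(\gamma))|\gtrsim\sigma^{K'}|\mathcal{M}(R^\Phi(\gamma_0))|$, which is what produces the extra factor $\kappa$ in $\delta_j$. This follows from the same chain estimate, since $P^\Phi(\gamma)$ sits at a bounded forward translation that is comparable, via Lemma \ref{lem: bounded distance} and Lemma \ref{lem: parameters}, to the stopping parent. After $j$ generations the uncovered measure is at most $\big((1-c_0)+C\lambda^K\big)^j|R(\gamma_0)|$. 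I choose $K$ so that $\rho=(1-c_0)+C\lambda^K<1$.

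The main obstacle is this recursion step. The admissibility threshold in each generation must be measured against $\mathcal{M}(R^\Phi(\gamma_0))$ rather than against the local $\mathcal{M}(P^\Phi(\gamma))$. In addition, chains that leave $R(\gamma_0)$ must not inflate the long-rectangle bound beyond a constant multiple of the measure of the current generation. If transferring the threshold downward fails, my fallback is to apply Lemma \ref{lem: decay of interim space} and Lemma \ref{lem: exponential decay} directly with the smaller $\Lambda=\delta_0\sigma^{jK}|\mathcal{M}(R^\Phi(\gamma_0))|$ at each stage. I would then accept the polynomial growth $\psi\le C\delta^{\mu}$ from Lemma \ref{lem: decay of interim space}, which is absorbed by taking $K$ larger, at the price of a smaller $\alpha$.
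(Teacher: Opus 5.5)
There is a genuine gap in the iteration. Your first generation is essentially the paper's one-step argument with $\delta_i=\delta_0$. The problem starts afterwards: you recurse only inside the short rectangles, so the long rectangles of each generation are never touched again.

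Write $U_g$ for the short mass and $L_g$ for the long mass of generation $g$. All you get is $U_{g+1}\le(1-c_0)U_g$ and $L_{g+1}\le C\lambda^K U_g$. The uncovered measure after $j$ generations is then only bounded by
\[
(1-c_0)^{j+1}|R(\gamma_0)|+C\lambda^K\sum_{g<j}(1-c_0)^g|R(\gamma_0)|,
\]
which tends to $C\lambda^K c_0^{-1}|R(\gamma_0)|>0$, not to $0$.

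Your claimed bound $\big((1-c_0)+C\lambda^K\big)^j$ would require recursing inside the long rectangles too. There the local threshold transfer is hopeless, because the loss $\sigma^{\tau_\Lambda(P(\gamma))}$ is unbounded. Letting $K=K_j\to\infty$ instead destroys the ratio condition $\delta_{j+1}/\delta_j\ge\eta$.

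Even for short rectangles, the transfer $|\mathcal{M}(P^\Phi(\gamma))|\gtrsim\sigma^{K'}|\mathcal{M}(R^\Phi(\gamma_0))|$ does not follow from Corollary \ref{cor: mh doubling}. Take $\tau_\Lambda(P(\gamma))=1$ with stopping ancestor $\pi P^2(\gamma)$, and $P(\gamma)$ in the top row of $\pi P(\gamma)$. Then $P^\Phi(\gamma)\subseteq\pi P^1(\gamma)$, so the target is one parent-length above the parent of $P^\Phi(\gamma)$, and the time-lag hypothesis $\theta_0+\psi>1$ fails.

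Your fallback is the paper's route, but it leaves out the estimate that makes it close. The paper inducts on the porosity constants themselves. Assuming $E$ is $(c_i,\delta_i,\Phi)$-weakly porous, it takes an arbitrary $R(\gamma_0)$ afresh, with base $\mathcal{G}^\Phi_{\delta_i}(R(\gamma_0))$ and $\Lambda=\delta_i|\mathcal{M}(R^\Phi(\gamma_0))|$. Everything is measured against the same $\mathcal{M}(R^\Phi(\gamma_0))$, so no local transfer is needed, and previously long rectangles become short as $\Lambda$ drops.

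To get $1-c_{i+1}=(1-hc_0)(1-c_i)$ with a cutoff independent of $i$, the long part must be $\lesssim\lambda^K(1-c_i)|R(\gamma_0)|$. It is not enough to have $\lesssim\lambda^K|R(\gamma_0)|$ as in your first step. With the weaker bound, $1-c_j$ stalls at a positive level however large you take $K$. The paper gets
\[
\Big|\bigcup_{P(\gamma)\in\mathcal{S}_1^i}P(\gamma)\Big|\le\sum_{k\ge1}|S_k^i|\le(1-c_i+\psi_i)|R(\gamma_0)|
\]
from two ingredients. Lemma \ref{lem: G and F extra}\ref{item: G and F extra 2} shows the chains never meet $F^\Phi_{\delta_i}$, so inside $R(\gamma_0)$ they lie in $G_i$. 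Lemma \ref{lem: decay of interim space} controls what lies outside $R(\gamma_0)$.

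Note that $\psi_i\le C_0\delta_i^\mu$ with $\mu>0$ is \emph{decay}, not growth. The paper absorbs it into $1-c_i$ by carrying the two-sided hypothesis $\delta_i^{2\alpha}\lesssim 1-c_i\lesssim\delta_i^{\alpha}$ with $2\alpha\le\mu$. Alternatively, you could carry it as an additive error $\lesssim\lambda^K\delta_i^\mu$, which decays geometrically in $i$ since $\delta_{i+1}=\sigma^K\delta_i$.
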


\begin{proof}
    We show $\alpha$-improvement using the sequential version of $\alpha$-improvement, see Proposition \ref{prop: alpha improvement eqv}. We generate inductively sequences $(c_i)_{i\in\N}$ and $(\delta_i)_{i\in \N}$ with $\delta_i \rightarrow 0$ and $\delta_{i+1}/\delta_i \geq \eta$ for some $\eta >0$ such that $E$ is $(c_i,\delta_i,\Phi)$-weakly porous and 
    \begin{align}
        K\delta_i^{2\alpha} \leq 1-c_i \leq K \delta_i^\alpha \label{eq: termination => pd induction assumption}
    \end{align}
    for every $i \in \N$. The base case of the induction will be the initial assumption of $E$ being $(c_0,\delta_0,\Phi)$-weakly porous. We set $K = (1-c_0) \delta_0^{-\alpha} >0$, while fixing $\alpha>0$ later. The base case of the induction now follows
    \begin{align*}
        (1-c_0) = (1-c_0)\delta_0^{-\alpha}\delta_0^\alpha = K \delta_0^{\alpha}.
    \end{align*}
    On the other hand, the choice of $K$ implies the inequality 
    \begin{align*}
        K \delta_0^{2\alpha} =  (1-c_0)\delta_0^\alpha \leq 1-c_0,
    \end{align*}
    proving the base case.
    
    We proceed by assuming that for some fixed $i \in \N$ set $E$ is $(c_i,\delta_i,\Phi)$-weakly porous for some $c_i,\delta_i \in (0,1)$ satisfying (\ref{eq: termination => pd induction assumption}). Suppose $R(\gamma_0) \subseteq \Rn$ is a parabolic rectangle with $0 \leq \gamma_0 \leq 1/2$, and let us denote $\mathcal{F}_{\delta_i}^\Phi = \mathcal{F}_{\delta_i}^\Phi\big(R(\gamma_0)\big)$ and $\mathcal{G}_{\delta_i}^\Phi = \mathcal{G}_{\delta_i}^\Phi\big(R(\gamma_0)\big)$ for the complementary rectangles. We further denote
    \begin{align*}
        |F_{i}| = \Big|\bigcup_{P(\gamma) \in \mathcal{F}_{\delta_i}^\Phi} P(\gamma)\Big|  \geq c_i|R(\gamma_0)|,
    \end{align*}
    where the inequality follows from Proposition \ref{prop: Fdelta}, and denote the complement
    \begin{align*}
        |G_{i}| = \big|R(\gamma_0) \setminus F_{i}\big| \leq (1-c_i)|R(\gamma_0)|.
    \end{align*}

    Consider then the collections $\mathcal{S}_k^i = \mathcal{S}_k(\mathcal{G}_{\delta_i}^\Phi,\Lambda)$ with $\Lambda = \delta_i|\mathcal{M}\big(R^\Phi(\gamma_0)\big)|$ for every $k=1,2,\dots$, see (\ref{eq: termination collections}), and define recursively
    \begin{align*}
        S_{1}^i = \bigcup_{P(\gamma)\in \mathcal{S}_{1}^i}P(\gamma) \quad \text{and} \quad S_{k+1}^i = \bigg(\bigcup_{P(\gamma)\in \mathcal{S}_{k+1}^i}P(\gamma) \bigg) \setminus \bigcup_{j=1}^{k}S_j^i .
    \end{align*}
    We introduce the key features of these sets. Clearly, $S_{k}^i$ are pairwise disjoint for each $k=1,2,\dots$ with
    \begin{align*}
        S_{k}^i \subseteq \bigcup_{P(\gamma) \in \mathcal{S}_{k}^i} P(\gamma) \quad \text{and} \quad \bigcup_{j=1}^k S_{j}^i = \bigcup_{j=1}^k \bigcup_{P(\gamma) \in \mathcal{S}_{k}^i} P(\gamma)
    \end{align*}
    for any $k = 1,2,\dots$. Furthermore, Lemma \ref{lem: G and F extra} implies
    \begin{align}
         G_{i} = \bigcup_{P(\gamma) \in \mathcal{G}_i}P(\gamma) \subseteq \bigcup_{k=1}^\infty S_k^i \label{eq: termination => pd G}
    \end{align}
    and
    \begin{align}
         \bigcup_{k=1}^\infty S_k^i \cap \bigcup_{P(\gamma) \in \mathcal{F}_i}P(\gamma) = \bigcup_{k=1}^\infty S_k^i \cap F_{i} = \emptyset. \label{eq: termination => pd F}
    \end{align}
    
    For an estimate which is needed later in the proof, it is important that we show that the measure of the union of $S_k^i$ outside of $R(\gamma_0)$ is proportional to $1-c_i$. Since $E$ is also $(c_0,\delta_0,\Phi)$-weakly porous, we can apply Lemma \ref{lem: decay of interim space}, which states
    \begin{align*}
        \bigcup_{k=1}^\infty S_k^i \subseteq \bigcup_{\theta \in [0,\psi_i]}R^{\theta}(\gamma_0), \quad \text{for some} \quad  0\leq \psi_i \leq C_0 \delta_i^\mu
    \end{align*}
    where $\mu = \mu(n,p,c_0,\delta_0)>0$ and $C_0 = C_0(p,d) >0$. We restrict $2\alpha \leq \mu$, which allows us to apply (\ref{eq: termination => pd induction assumption}) to estimate $\psi_i$. Now, by (\ref{eq: termination => pd F}) we obtain
    \begin{align}
        \begin{split}
        \sum_{k=1}^\infty |S_k^i| &= \Big|\bigcup_{k=1}^\infty S_k^i\Big| \leq \Big|\bigcup_{\theta \in [0,\psi_i]}R^{\theta}(\gamma_0)\setminus F_{i}\Big| =|G_{i}| + \Big|\bigcup_{\theta \in [0,\psi_i]}R^{\theta}(\gamma_0)\setminus R(\gamma_0)\Big| \\
        &\leq (1-c_i + \psi_i) |R(\gamma_0)| \leq (1-c_i +C_0\delta_i^\mu) |R(\gamma_0)| \\
        &\leq \big(1-c_i +C_0K^{-1}(1-c_i)\big) |R(\gamma_0)| \\
        &= \Big(1+C_0 \frac{\delta_0^\alpha}{1-c_0}\Big)(1-c_i) |R(\gamma_0)| \\
        &\leq (1+C)(1-c_i) |R(\gamma_0)|,
        \end{split} \label{eq: termination => pd Sk estimate}
    \end{align}
    where $C = C_0(1-c_0)^{-1}$.
    
    Next, we get to the key part of the argument. We claim that for any $h \in [1/2,1)$, yet to be chosen, we can write
    \begin{align}
        \Big|F_i \cup \bigg(G_{i} \cap  \bigcup_{k=1}^{H_i} S_k^i \bigg)\Big| &\geq \big(c_i+ h(1-c_i)\big)|R(\gamma_0)| , \label{eq: termination => pd new covering layer estimate}
    \end{align}
     where $H_i \in \N$ is large enough. Notice that by (\ref{eq: termination => pd G}) we can formulate (\ref{eq: termination => pd new covering layer estimate}) equivalently as
    \begin{align*}
        \Big|G_{i} \cap \bigcup_{k=H_i +1}^\infty S_k^i\Big| &= |G_i| - \Big|G_i\cap \bigcup_{k=1}^{H_i} S_k^i\Big| \\
        &\leq |G_i| - \Big(\big(c_i+ h(1-c_i)\big)|R(\gamma_0)| - |F_i|\Big)\\
        &= (1-h)(1-c_i) |R(\gamma_0)|.
    \end{align*}
    We want to show that $H_i$ can be chosen independent of $i$. To do this, we use Lemma \ref{lem: exponential decay}\ref{item: exponential decay 3} to estimate the measures of the sets $S_k^i$ as
    \begin{align*}
        |S_k^i| \leq \Big |\bigcup_{P(\gamma) \in \mathcal{S}_k^i}P(\gamma)\Big| \leq \lambda^{k-1} \Big|\bigcup_{P(\gamma) \in \mathcal{S}_1^i}P(\gamma)\Big|= \lambda^{k-1}|S_1^i|
    \end{align*}
    for every $k =2,3,\dots$, where $\lambda = \lambda(n,p,d) \in (0,1)$. With this exponential estimate and (\ref{eq: termination => pd Sk estimate}) we get
    \begin{align*}
        \Big|G_{i} \cap \bigcup_{k=H_i +1}^\infty S_k^i\Big| &\leq \Big|\bigcup_{k=H_i +1}^\infty S_k^i\Big| = \sum_{k=H_i +1}^\infty |S_k^i| \leq \sum_{k=H_i +1}^\infty \lambda^{k-1}|S_1^i| =  \frac{\lambda^{H_i}}{1-\lambda} |S_1^i|\\
        &\leq \frac{\lambda^{H_i}}{1-\lambda} \sum_{k=1}^\infty |S_k^i| \leq (1+C)\frac{\lambda^{H_i}}{1-\lambda} (1-c_i) |R(\gamma_0)|.
    \end{align*}
    It follows that it is enough to require $H_i$ to be so large that
    \begin{align*}
        (1+C)\frac{\lambda^{H_i}}{1-\lambda} (1-c_i) |R(\gamma_0)| \leq (1-h)(1-c_i)|R(\gamma_0)|,
    \end{align*}
    which would imply (\ref{eq: termination => pd new covering layer estimate}). We choose $H_i$ be the smallest integer satisfying the above, in particular, $H_i$ satisfies
    \begin{align}
        \frac{\lambda(1-\lambda)(1-h)}{1+C} < \lambda^{H_i} \leq \frac{(1-\lambda)(1-h)}{1+C} \leq (1-\lambda)(1-h).\label{eq: termination => pd lambda estimate}
    \end{align}
    Note the upper and lower bound of $H_i$ are independent of $i$, which will be crucial for later.

    We recall that the definition (\ref{eq: termination collections}) necessitates
    \begin{align*}
        \bigcup_{k=1}^{H_i}\mathcal{S}_k^i \subseteq \mathcal{D}^{\text{ext}}\big(R(\gamma_0)\big).
    \end{align*}
    Hence, for every $P(\gamma) \in \bigcup_{k=1}^{H_i}\mathcal{S}_k^i$ with $0\leq \gamma \leq 1/2$ and $P(\gamma) \in \mathcal{D}\big(R(\gamma_0)\big)$ there exists a maximal rectangle $\Tilde{P}(\gamma_1) \in \bigcup_{k=1}^{H_i}\mathcal{S}_k^i$ with $0 \leq \gamma_1\leq 1/2$ such that $P(\gamma) \in \mathcal{D}\big(\Tilde{P}(\gamma_1)\big) \subseteq \mathcal{D}\big(R(\gamma_0)\big)$. Denote the collection of these $\Tilde{P}(\gamma_1)$ by
    \begin{align*}
        \mathcal{A}_i \subseteq \bigcup_{k=1}^{H_i}\mathcal{S}_k^i \subseteq \mathcal{D}\big(R(\gamma_0)\big).
    \end{align*}
    By the nestedness of the dyadic lattice $\mathcal{A}_i$ is a collection of pairwise disjoint rectangles, since if any of them intersected, then the other rectangle would not be maximal. Moreover, the construction of $\mathcal{A}_i$ and (\ref{eq: termination => pd F}) imply
    \begin{align*}
        \bigcup_{P(\gamma)\in \mathcal{A}_i}P(\gamma)  = R(\gamma_0) \cap \bigcup_{k=1}^{H_i} S_k^i = \big(R(\gamma_0)\setminus F_i\big) \cap \bigcup_{k=1}^{H_i} S_k^i = G_{i} \cap \bigcup_{k=1}^{H_i} S_k^i.
    \end{align*}
    
    To obtain $c_{i+1}$, we cover $R(\gamma_0)$ with the set $F_i$ and using parts of $G_i$. Take any $P(\gamma)\in \mathcal{A}_i$ with $0\leq \gamma \leq 1/2$. Observe that then $P(\gamma) \in \mathcal{D}_1\big(\pi P(\gamma)\big) \subseteq \mathcal{D}_1\big(R(\gamma_0)\big)$. Since $E$ is $(c_0,\delta_0,\Phi)$-weakly porous, by Theorem \ref{theo: doubling porosity} there exists $\sigma_1 = \sigma_1(n,p,d,c_0,\delta_0,\phi,\Phi) \in (0,1)$ such that for any $\psi \in [\phi, \Phi]$ we can find pairwise disjoint $E$-free subrectangles
    $Q_j(\beta_j) \in\mathcal{D}\big(P(\gamma)\big)$ with $0 \leq \beta \leq 1/2$ for $j= 1,2,\dots, N$ such that
    \begin{align}
        |Q_j(\beta_j)|\geq \sigma_1 |\mathcal{M}\big(\pi P^{\psi}(\gamma)\big)| = \sigma_1 |\mathcal{M}\big(\pi^+ P^{\psi-\theta_0}(\gamma)\big)| \label{eq: termination => pd doubling}
    \end{align}
    and
    \begin{align*}
        F\big(P(\gamma)\big) = \sum_{j=1}^N |Q_j(\beta_j)| \geq c_0 |P(\gamma)|.
    \end{align*}

    We will first concentrate on determining $c_{i+1} >0$. By summing the estimate above over every rectangle in $  \mathcal{A}_i$, we get
    \begin{align*}
        \sum_{P(\gamma) \in \mathcal{A}_i} F \big(P(\gamma)\big) \geq c_0 \sum_{P(\gamma) \in \mathcal{A}_i} |P(\gamma)| = c_0 \Big|G_{i} \cap \bigcup_{k=1}^{H_i} S_k^i\Big|.
    \end{align*}
    The collections $\mathcal{F}_{\delta_i}^\Phi$ and $\mathcal{A}_i$ are clearly finite, pairwise disjoint and $\mathcal{F}_{\delta_i}^\Phi\cup \mathcal{A}_i \subseteq \mathcal{D}\big(R(\gamma_0)\big)$. Furthermore, since $F_i$ and $G_i$ are disjoint, by (\ref{eq: termination => pd new covering layer estimate}) we have
    \begin{align*}
        \sum_{P(\gamma) \in \mathcal{A}_i} F\big(P(\gamma)\big) + \sum_{P(\gamma) \in \mathcal{F}_{\delta_i}^\Phi}|P(\gamma)| &\geq c_0 \Big|G_{i} \cap \bigcup_{k=1}^{H_i} S_k^i\Big| + |F_i|\\
        &= c_0 \Big| F_i\cup \bigg(G_{i} \cap \bigcup_{k=1}^{H_i} S_k^i\bigg)\Big| + (1-c_0) |F_i|\\
        &\geq \Big(c_0\big(c_i+h(1-c_i)\big) + (1-c_0)c_i\Big) |R(\gamma_0)| \\
        &= \big(hc_0 (1-c_i) + c_i\big) |R(\gamma_0)|.
    \end{align*}
    We define $c_{i+1} = hc_0 (1-c_i) + c_i \in (0,1)$ as wanted. Moreover,
    \begin{align}
        1 - c_{i+1} = 1- c_i - hc_0(1-c_i) = (1-hc_0)(1-c_i). \label{eq: termination => pd ci+1}
    \end{align}
    
    Next, we determine the coefficient $\delta_{i+1} >0$. Since $\mathcal{A}_i \subseteq \bigcup_{k=1}^{H_i}\mathcal{S}_k^i$, for each $P(\gamma) \in \mathcal{A}_i$ we have $\tau_\Lambda\big(P(\gamma)\big) = j_P$ for some $j_P=1,\dots,H_i$, recall (\ref{eq: termination collections}). By definition (\ref{eq: termination number}), $j_P$ is the first index such that
    \begin{align*}
        |\mathcal{M}\big(\pi_{j_P}^+P^\theta(\gamma)\big)| \geq \Lambda = \delta_i|\mathcal{M}\big(R^\Phi(\gamma_0)\big)|
    \end{align*}
    for some $\theta \in [\phi-\theta_0, \Phi-\theta_0]$. If $j_P = 1$, we set $\psi = \theta + \theta_0 \in [\phi, \Phi]$, and then clearly (\ref{eq: termination => pd doubling}) becomes
    \begin{align*}
        |Q_j(\beta_j)| \geq \sigma_1 |\mathcal{M}\big(\pi^+ P^{\psi-\theta_0}(\gamma)\big)| = \sigma_1|\mathcal{M}\big(\pi_{j_P}^+P^\theta(\gamma)\big)| \geq \sigma_1\delta_i|\mathcal{M}\big(R^\Phi(\gamma_0)\big)|.
    \end{align*}
    On the other hand, if $j_P\geq 2$, we set $\psi = \theta_0\in [\phi,\Phi]$, see Lemma \ref{lem: parameters}\ref{item: parameters 2}. Since $E$ is $(c_0,\delta_0,\Phi)$-weakly porous, we apply Corollary \ref{cor: mh doubling}, transforming (\ref{eq: termination => pd doubling}) into
    \begin{align*}
        |Q_j(\beta_j)| \geq \sigma_1|\mathcal{M}\big(\pi^+P(\gamma)\big)| &\geq \sigma_1\sigma_2^{j_P-1}|\mathcal{M}\big(\pi_{j_P-1}^+(\pi^+P(\gamma))^\theta\big)|\\
        &= \sigma_1\sigma_2^{j_P-1}|\mathcal{M}\big(\pi_{j_P}^+P^\theta(\gamma)\big)| \\
        &\geq \sigma_1\sigma_2^{H_i-1} \delta_i|\mathcal{M}\big(R^\Phi(\gamma_0)\big)|,
    \end{align*}
    where $\sigma_2 = \sigma_2(n,p,d,c_0,\delta_0,\phi,\Phi) \in (0,1)$. We set $\sigma = \min\{\sigma_1,\sigma_2\}$ to obtain
    \begin{align*}
        |Q_j(\beta_j)| \geq \sigma^{H_i} \delta_i|\mathcal{M}\big(R^\Phi(\gamma_0)\big)|
    \end{align*}
    in both cases, that is, $j_P=1$ and $j_P \geq 2$. Hence, we define $\delta_{i+1} = \sigma^{H_i} \delta_i \in (0,1)$. It follows that $E$ is $(c_{i+1},\delta_{i+1},\Phi)$-weakly porous, where the rectangles of $\mathcal{F}_{\delta_i}^\Phi$ and  $Q_j(\beta_j) \in\mathcal{D}\big(P(\gamma)\big)$ for each $P(\gamma) \in \mathcal{A}_i$ form the $\delta_{i+1}$-admissible rectangles. The choice of rectangles is valid since  $\mathcal{F}_{\delta_i}^\Phi$ is also a collection of $\delta_{i+1}$-admissible rectangles as $\delta_{i+1} \leq \delta_i$.
    
    It is left to prove (\ref{eq: termination => pd induction assumption}) and verify that the sequence $(\delta_i)_{i\in \N}$ satisfies its extra conditions to show $\alpha$-improvement. We do this by choosing the parameter $1/2\leq h<1$ carefully. We set it such that
    \begin{align}
        \big((1-\lambda)(1-h)\big)^{\frac{3}{2}\alpha\log_\lambda(\sigma)} = 1-hc_0, \label{eq: termination => pd setting h}
    \end{align}
    for any $0 \leq \alpha \leq \mu/2$ small enough. We show that such $1/2\leq h<1$ exists. We test the end point values of $h$. First, if $h=1$,
    then
    \begin{align*}
        \big((1-\lambda)(1-h)\big)^{\frac{3}{2}\alpha\log_\lambda(\sigma)} = 0 < 1-c_0 = 1-hc_0.
    \end{align*} On the other hand, if we further restrict
    \begin{align}
        \alpha \leq \frac{1}{2 \log_\lambda(\sigma)} \cdot \frac{\ln\big(1-\frac{1}{2}c_0\big)}{\ln\big(\frac{1}{2}(1-\lambda)\big)}, \label{eq: termination => pd alpha1}
    \end{align}
    then for $h= 1/2$ we have
     \begin{align*}
        \big((1-\lambda)(1-h)\big)^{\frac{3}{2}\alpha\log_\lambda(\sigma)} &> \big((1-\lambda)(1-h)\big)^{2\alpha\log_\lambda(\sigma)} \\
        &= \Big(\frac{1}{2}(1-\lambda)\Big)^{2\alpha\log_\lambda(\sigma)}\\
        &\geq 1-\frac{1}{2}c_0 = 1- hc_0.
    \end{align*}
    Since (\ref{eq: termination => pd setting h}) is continuously dependent on $h$ for any $0 \leq \alpha \leq \mu/2$ satisfying (\ref{eq: termination => pd alpha1}), there must exist some $h = h(\alpha,c_0,\lambda, \sigma) \in [1/2,1)$ satisfying it. Substituting $\delta_{i+1} = \sigma^{H_i} \delta_i$, (\ref{eq: termination => pd lambda estimate}),  (\ref{eq: termination => pd induction assumption}) and (\ref{eq: termination => pd setting h}) shows
    \begin{align*}
         K\delta_{i+1}^{2\alpha} &= K(\sigma^{H_i}\delta_i)^{2\alpha} =K\delta_i^{2\alpha}(\lambda^{H_i})^{2\alpha\log_\lambda(\sigma)} \\
         &\leq K\delta_i^{2\alpha}\big((1-\lambda)(1-h)\big)^{2\alpha\log_\lambda(\sigma)} \\
         &\leq \big((1-\lambda)(1-h)\big)^{\frac{3}{2}\alpha\log_\lambda(\sigma)}(1-c_i) \\
         &= (1-hc_0)(1-c_i) \\
         &= 1-c_{i+1},
    \end{align*}
    where the last line was from (\ref{eq: termination => pd ci+1}), proving the first inequality of the induction.    
    
    We then prove the other inequality of the induction. We again substitute $\delta_{i+1} = \sigma^{H_i} \delta_i$, (\ref{eq: termination => pd lambda estimate}), (\ref{eq: termination => pd induction assumption}) and (\ref{eq: termination => pd setting h}) to show
    \begin{align*}
        K \delta_{i+1}^\alpha &= K(\sigma^{H_i}\delta_i)^\alpha =K\delta_i^{\alpha}(\lambda^{H_i})^{\alpha\log_\lambda(\sigma)} \\
        &\geq K \delta_i^{\alpha} \Big(\frac{\lambda(1-\lambda)(1-h)}{1 + C}\Big)^{\alpha \log_\lambda(\sigma)} \\
        &\geq \Big(\frac{\lambda}{1 + C}\Big)^{\alpha \log_\lambda(\sigma)} (1-hc_0)^\frac{2}{3}(1-c_i).
    \end{align*}
    We impose one last restriction to $\alpha$ by setting
    \begin{align*}
        \alpha \leq \frac{1}{\log_\lambda(\sigma)} \cdot \frac{\frac{1}{3}\ln\big(1-\frac{1}{2}c_0\big)}{\ln\big(\frac{\lambda}{1+C}\big)},
    \end{align*}
    which implies a lower bound 
    \begin{align*}
        \Big(\frac{\lambda}{1 + C}\Big)^{\alpha \log_\lambda(\sigma)} \geq (1-\frac{1}{2}c_0)^\frac{1}{3}\geq (1-hc_0)^\frac{1}{3}.
    \end{align*}
    Thus, we naturally obtain
    \begin{align*}
        K \delta_{i+1}^\alpha \geq (1-hc_0)(1-c_i) = 1- c_{i+1},
    \end{align*}
    which proves the second inequality of the induction. Moreover, we can fix
    \begin{align*}
        \alpha = \min\bigg\{\frac{\mu}{2},\frac{1}{2 \log_\lambda(\sigma)} \cdot \frac{\ln\big(1-\frac{1}{2}c_0\big)}{\ln\big(\frac{1}{2}(1-\lambda)\big)}, \frac{1}{\log_\lambda(\sigma)} \cdot \frac{\frac{1}{3}\ln\big(1-\frac{1}{2}c_0\big)}{\ln\big(\frac{\lambda}{1+C}\big)}\bigg\}.
    \end{align*}

    Finally, we observe that by (\ref{eq: termination => pd lambda estimate}) the ratio between $\delta_{i+1}$ and $\delta_i$ is bounded below by
    \begin{align*}
        \frac{\delta_{i+1}}{\delta_i} = \sigma^{H_i} &=  (\lambda^{H_i})^{\log_\lambda(\sigma)} \geq \Big(\frac{\lambda(1-\lambda)(1-h)}{1 + C}\Big)^{\log_\lambda(\sigma)} = \eta >0
    \end{align*}
    for each $i\in \N$. On the other hand, by (\ref{eq: termination => pd lambda estimate})
    \begin{align*}
        \lim_{i \rightarrow \infty} \delta_i = \lim_{i \rightarrow \infty}\bigg(\delta_0 \prod_{j=0}^{i-1} \sigma^{H_j} \bigg)\leq \delta_0 \lim_{i \rightarrow \infty} \Big(\big((1-\lambda)(1-h)\big)^{\log_\lambda(\sigma)}\Big)^i = 0.
    \end{align*}
    The proof is now complete by Proposition \ref{prop: alpha improvement eqv}.
\end{proof}

\subsection{Full characterization}
Finally, we have the full characterization of the parabolic weak porosity by gathering the results of the paper. Our main theorem shows a quantitative and complete connection between the parabolic weakly porous sets with positive time-lag and $A_1^+(\gamma)$ distance weights for $0<\gamma<1$.

\begin{theorem} \label{theo: grand theorem}
    Suppose $E \subseteq \Rn$ is a nonempty closed set, $\theta>1$ and  $0 <\gamma <1$. Then, the following statements are equivalent:
    \begin{enumerate}[label=(\roman*)]
        \item $E$ is $(c,\delta,\theta)$-weakly porous for some $c,\delta \in (0,1)$. \label{item: grand theorem 1}
        \item $E$ is $\alpha$-improving parabolic weakly porous set for some $\alpha>0$. \label{item: grand theorem 2}
        \item $\dist_p\big(\cdot,E\big)^{-\beta(n+p)} \in A_1^+(\gamma)$ for some $\beta>0$. \label{item: grand theorem 3}
    \end{enumerate}
    Moreover, the dependencies are quantitative.
\end{theorem}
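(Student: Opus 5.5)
The plan is to prove the cycle \ref{item: grand theorem 1} $\Rightarrow$ \ref{item: grand theorem 2} $\Rightarrow$ \ref{item: grand theorem 3} $\Rightarrow$ \ref{item: grand theorem 1}. Throughout, I track the constants so that the dependencies stay quantitative.

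For \ref{item: grand theorem 1} $\Rightarrow$ \ref{item: grand theorem 2}, assume $E$ is $(c,\delta,\theta)$-weakly porous with $\theta>1$. First I apply Corollary \ref{cor: translation} with $\psi=\Phi$, where $\Phi$ is the fixed parameter of Lemma \ref{lem: parameters}. This shows that $E$ is $(c,\sigma,\Phi)$-weakly porous with $\sigma=\sigma(n,p,d,c,\delta,\theta,\Phi)$. Lemma \ref{lem: wp => polynomial decay} then gives that $E$ is $\alpha$-improving with respect to the translation $\Phi$, for some $\alpha=\alpha(n,p,d,c,\sigma)>0$.

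The delicate point is that item \ref{item: grand theorem 2} should be read with the same translation $\theta$, or at least with some translation larger than one. To pass back from $\Phi$ to $\theta$, I use the sequential characterization, Proposition \ref{prop: alpha improvement eqv}. For each $\delta_i$, the set $E$ is $(c_i,\delta_i,\Phi)$-weakly porous, and Corollary \ref{cor: translation} turns this into $(c_i,\sigma_i,\theta)$-weak porosity. The quantitative part of Theorem \ref{theo: doubling porosity} gives $\sigma_i=C\delta_i^{\nu}$, uniformly once $c_i\ge c_0$ and $\theta\in[\theta_1,\theta_2]$. Then $1-c_i\le K\delta_i^\alpha=K'\sigma_i^{\alpha/\nu}$, the ratios $\sigma_{i+1}/\sigma_i\ge\eta^\nu$ stay bounded below, and $\sigma_i\to0$. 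Proposition \ref{prop: alpha improvement eqv} therefore yields $(\alpha/\nu)$-improvement with translation $\theta$.

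One caveat is that Theorem \ref{theo: doubling porosity} is stated with the porosity hypothesis taken in the translation $\theta$ and the conclusion in $\psi$. Here I need it with roles swapped, namely hypothesis in $\Phi>1$ and conclusion in $\theta$. This is allowed, since that theorem holds for any hypothesis translation larger than one. I expect this translation bookkeeping, ensuring the exponent $\nu$ and the constant $C$ do not depend on $i$, to be the main (though modest) obstacle, since all the genuinely hard work is already in Lemma \ref{lem: wp => polynomial decay}.

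For \ref{item: grand theorem 2} $\Rightarrow$ \ref{item: grand theorem 3}, I first reduce $\alpha$ if necessary so that $0<\alpha<(n+p)^{-1}$. Shrinking $\alpha$ preserves $\alpha$-improvement, because $\delta^\alpha$ grows as $\alpha$ decreases when $\delta<1$, at the cost of enlarging $K$ by a constant depending on $\delta_0$. Then Corollary \ref{cor: alpha improvement => A1} with $\theta>1$ gives $\dist_p(\cdot,E)^{-\beta(n+p)}\in A_1^+(\gamma)$ for any $0<\beta<\alpha$, for instance $\beta=\alpha/2$. This corollary relies on Theorem \ref{theo: alpha improvement => A1} and Lemma \ref{lem: A1 starting point}, and the latter handles the arbitrary $\gamma\in(0,1)$.

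Finally, \ref{item: grand theorem 3} $\Rightarrow$ \ref{item: grand theorem 1} follows directly from Theorem \ref{theo: A1 => wp} with the given $\theta>1$ and $\alpha=\beta$. That theorem even yields $\beta$-improvement directly, so \ref{item: grand theorem 3} also implies \ref{item: grand theorem 2} without going through \ref{item: grand theorem 1}. Collecting the constants from each step gives the quantitative dependencies claimed in the statement.
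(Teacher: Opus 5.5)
Your proof is correct and uses the same ingredients as the paper, but the implications are ordered differently, and that ordering costs you one extra step.

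The paper proves the cycle as follows:
(iii) $\Rightarrow$ (ii) by Theorem \ref{theo: A1 => wp}, which already gives $\alpha$-improvement in the original translation $\theta$;
(ii) $\Rightarrow$ (i) trivially;
(i) $\Rightarrow$ (iii) by passing to $\Phi$ via Corollary \ref{cor: translation}, applying Lemma \ref{lem: wp => polynomial decay}, and then Corollary \ref{cor: alpha improvement => A1} with $\Phi$ as the translation.
The change of translation is absorbed by the time-lag invariance of $A_1^+(\gamma)$ in Lemma \ref{lem: A1 starting point}. So $\alpha$-improvement never has to be moved back from $\Phi$ to $\theta$, which is exactly the shortcut you note in your last paragraph.

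Your direct (i) $\Rightarrow$ (ii) does need that transfer, and your argument for it is sound. Theorem \ref{theo: doubling porosity} is stated for any hypothesis translation larger than one and any target $\psi>1$, so no swapping of roles is actually required. The constants $C,\nu$ are uniform in $i$ once you ensure $c_i\ge c$. This is automatic for the nondecreasing sequence built in the proof of Lemma \ref{lem: wp => polynomial decay}. Alternatively, replace $c_i$ by $\max\{c_i,c\}$; this is legitimate because $\delta_i$ lies below the admissibility constant of the initial porosity.

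What your route buys is a purely geometric statement: $\alpha$-improvement is itself time-lag invariant, with exponent $\alpha/\nu$. The paper's route is shorter.

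Your reduction of $\alpha$ below $(n+p)^{-1}$ before invoking Corollary \ref{cor: alpha improvement => A1} fills a detail the paper passes over, since Lemma \ref{lem: wp => polynomial decay} does not guarantee it. It needs no enlargement of $K$: $\delta\le\delta_0<1$ gives $\delta^{\alpha}\le\delta^{\alpha'}$ for $\alpha'<\alpha$.
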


\begin{proof}
    Theorem \ref{theo: A1 => wp} shows the direction \ref{item: grand theorem 3} $\Rightarrow$ \ref{item: grand theorem 2}. On the other hand, \ref{item: grand theorem 2} $\Rightarrow$ \ref{item: grand theorem 1} follows directly from Definition \ref{def: alpha improvement}. 
    
    Starting from \ref{item: grand theorem 1}, we assume $E$ is $(c,\delta,\theta)$-weakly porous for some $c,\delta \in (0,1)$ and $\theta>1$. Corollary \ref{cor: translation} shows that $E$ is $(c,\sigma,\Phi)$-weakly porous for $\sigma \in (0,1)$ and $\Phi>1$ from Lemma \ref{lem: parameters}. Then, Lemma \ref{lem: wp => polynomial decay} implies that $E$ is also $\alpha$-improving parabolic weakly porous set for $\alpha>0$.  Finally, Corollary \ref{cor: alpha improvement => A1} shows that $\dist_p\big(\cdot,E\big)^{-\beta(n+p)} \in A_1^+(\gamma)$ for some $\beta >0$, which proves that \ref{item: grand theorem 1} $\Rightarrow$ \ref{item: grand theorem 3}.
\end{proof}


\begin{thebibliography}{10}

\bibitem{Aikawa1991}
Hiroaki Aikawa, \emph{Quasiadditivity of {R}iesz capacity}, Math. Scand. \textbf{69} (1991), no.~1, 15--30.

\bibitem{ACDT2014}
Hugo. Aimar, Marilina. Carena, Ricardo Dur\'an, and Marisa Toschi, \emph{Powers of distances to lower dimensional sets as {M}uckenhoupt weights}, Acta Math. Hungar. \textbf{143} (2014), no.~1, 119--137.

\bibitem{AGIM25}
Hugo Aimar, Ivana G\'omez, Ignacio G\'omez~Vargas, and Francisco~Javier Mart\'in-Reyes, \emph{One-sided {M}uckenhoupt weights and one-sided weakly porous sets in {$\mathbb{R}$}}, J. Funct. Anal. \textbf{289} (2025), no.~10, Paper No. 111110, 18.

\bibitem{AimarGomezVargas24}
Hugo Aimar, Ivana G{\'o}mez, and Ignacio~G{\'o}mez Vargas, \emph{Weakly porous sets and $ {A}_1 $ {M}uckenhoupt weights in spaces of homogeneous type}, arXiv preprint arXiv:2406.14369 (2024).

\bibitem{ALMV24}
Theresa~C. Anderson, Juha Lehrb\"ack, Carlos Mudarra, and Antti~V. V\"ah\"akangas, \emph{Weakly porous sets and {M}uckenhoupt {$A_p$} distance functions}, J. Funct. Anal. \textbf{287} (2024), no.~8, Paper No. 110558, 34.

\bibitem{CKYY2026}
Mingming Cao, Weiyi Kong, Dachun Yang, Wen Yuan, and Chenfeng Zhu, \emph{Parabolic extrapolation and its applications to characterizing parabolic {BMO} spaces via parabolic fractional commutators}, J. Geom. Anal. \textbf{36} (2026), no.~2, Paper No. 65, 37.

\bibitem{Mudarra25}
Mudarra Carlos, \emph{Weak porosity on metric measure spaces}, Proc. Roy. Soc. Edinburgh Sect. A (2025), 1--48.

\bibitem{Cruz-Uribe2025Two-weightFunctions}
David Cruz-Uribe and Kim Myyryl{\"{a}}inen, \emph{Two-weight norm inequalities for parabolic fractional maximal functions}, Rev. Mat. Complut. (2025), 1--30.

\bibitem{DuranGarcia2010}
Ricardo~G. Dur\'an and Fernando L\'opez~Garc\'ia, \emph{Solutions of the divergence and analysis of the {S}tokes equations in planar {H}\"older-{$\alpha$} domains}, Math. Models Methods Appl. Sci. \textbf{20} (2010), no.~1, 95--120.

\bibitem{DILLTV2019}
Bart{\l}omiej Dyda, Lizaveta Ihnatsyeva, Juha Lehrb\"ack, Heli Tuominen, and Antti~V. V\"ah\"akangas, \emph{Muckenhoupt {$A_p$}-properties of distance functions and applications to {H}ardy-{S}obolev--type inequalities}, Potential Anal. \textbf{50} (2019), no.~1, 83--105.

\bibitem{GianazzaVespri2006}
Ugo Gianazza and Vincenzo Vespri, \emph{A {H}arnack inequality for solutions of doubly nonlinear parabolic equations}, J. Appl. Funct. Anal. \textbf{1} (2006), no.~3, 271--284.

\bibitem{Vargas2026}
Ignacio G\'omez~Vargas, \emph{New characterizations of {M}uckenhoupt {$A_p$} distance weights for {$p>1$}}, J. Math. Anal. Appl. \textbf{556} (2026), no.~1, Paper No. 130091, 27.

\bibitem{KinnunenKuusi07}
Juha Kinnunen and Tuomo Kuusi, \emph{Local behaviour of solutions to doubly nonlinear parabolic equations}, Math. Ann. \textbf{337} (2007), no.~3, 705--728.

\bibitem{KinnunenMyyry2024}
Juha Kinnunen and Kim Myyryl\"ainen, \emph{Characterizations of parabolic {M}uckenhoupt classes}, Adv. Math. \textbf{444} (2024), Paper No. 109612, 57.

\bibitem{KinnunenMyyry2025}
\bysame, \emph{Characterizations of parabolic reverse {H}\"older classes}, J. Anal. Math. \textbf{155} (2025), no.~2, 609--656.

\bibitem{KinnunenMyyryYang2024}
Juha Kinnunen, Kim Myyryl\"ainen, and Dachun Yang, \emph{John-{N}irenberg inequalities for parabolic {BMO}}, Math. Ann. \textbf{387} (2023), no.~3-4, 1125--1162.

\bibitem{kinnunenSaariMuckenhoupt}
Juha Kinnunen and Olli Saari, \emph{On weights satisfying parabolic {M}uckenhoupt conditions}, Nonlinear Anal. \textbf{131} (2016), 289--299.

\bibitem{kinnunenSaariParabolicWeighted}
\bysame, \emph{Parabolic weighted norm inequalities and partial differential equations}, Anal. PDE \textbf{9} (2016), no.~7, 1711--1736.

\bibitem{KongYangYuan26}
Weiyi Kong, Dachun Yang, and Wen Yuan, \emph{One-sided and parabolic {BLO} spaces with time lag and their applications to {M}uckenhoupt {$A_1$} weights and doubly nonlinear parabolic equations}, arXiv preprint arXiv:2602.09741 (2026).

\bibitem{KongYangYuanZhu2025}
Weiyi Kong, Dachun Yang, Wen Yuan, and Chenfeng Zhu, \emph{Parabolic muckenhoupt weights characterized by parabolic fractional maximal and integral operators with time lag}, Canad. J. Math. (2025), 1--54.

\bibitem{Luukkainen1998}
Jouni Luukkainen, \emph{Assouad dimension: antifractal metrization, porous sets, and homogeneous measures}, J. Korean Math. Soc. \textbf{35} (1998), no.~1, 23--76.

\bibitem{MaHeYan23}
Jiao Ma, Qianjun He, and Dunyan Yan, \emph{Weighted characterization of parabolic fractional maximal operator}, Front. Math. \textbf{18} (2023), no.~1, 185--196.

\bibitem{PasquarielloUriarte-Tuero2025}
Marcus Pasquariello and Ignacio Uriarte-Tuero, \emph{Medians, oscillations, and distance functions}, arXiv preprint arXiv:2507.21020 (2025).

\bibitem{Vasin05}
Andrei~V. Vasin, \emph{The limit set of a {F}uchsian group and the {D}yn'kin lemma}, J. Math. Sci.(N. Y.) \textbf{129} (2005), no.~4, 977--3984.

\bibitem{Vasin2025}
\bysame, \emph{On characterization of weakly porous sets via dyadic coverings}, J. Anal. Math. \textbf{157} (2025), no.~2, 789--802.

\end{thebibliography}
\end{document}